\theoremstyle{definition}
\newtheorem{defn}{Definition}[section]
\theoremstyle{plain}
\newtheorem{thm}[defn]{Theorem}
\newtheorem{conj}[defn]{Conjecture}
\newtheorem{prop}[defn]{Proposition}
\newtheorem{lem}[defn]{Lemma} 
\newtheorem{cor}[defn]{Corollary}
\newcommand{\Z}{\mathbb{Z}}
\newcommand{\Kh}{H^{Kh}}
\newcommand{\Ckh}{C^{Kh}}
\newcommand{\Sk}{\mathcal{S}}
\newcommand{\vcenter{\hbox{\def\svgwidth{.020\columnwidth} %% Creator: Inkscape inkscape 0.48.4, www.inkscape.org
%% PDF/EPS/PS + LaTeX output extension by Johan Engelen, 2010
%% Accompanies image file '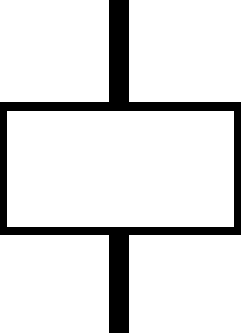' (pdf, eps, ps)
%%
%% To include the image in your LaTeX document, write
%%   \input{<filename>.pdf_tex}
%%  instead of
%%   \includegraphics{<filename>.pdf}
%% To scale the image, write
%%   \def\svgwidth{<desired width>}
%%   \input{<filename>.pdf_tex}
%%  instead of
%%   \includegraphics[width=<desired width>]{<filename>.pdf}
%%
%% Images with a different path to the parent latex file can
%% be accessed with the `import' package (which may need to be
%% installed) using
%%   \usepackage{import}
%% in the preamble, and then including the image with
%%   \import{<path to file>}{<filename>.pdf_tex}
%% Alternatively, one can specify
%%   \graphicspath{{<path to file>/}}
%% 
%% For more information, please see info/svg-inkscape on CTAN:
%%   http://tug.ctan.org/tex-archive/info/svg-inkscape
%%
\begingroup%
  \makeatletter%
  \providecommand\color[2][]{%
    \errmessage{(Inkscape) Color is used for the text in Inkscape, but the package 'color.sty' is not loaded}%
    \renewcommand\color[2][]{}%
  }%
  \providecommand\transparent[1]{%
    \errmessage{(Inkscape) Transparency is used (non-zero) for the text in Inkscape, but the package 'transparent.sty' is not loaded}%
    \renewcommand\transparent[1]{}%
  }%
  \providecommand\rotatebox[2]{#2}%
  \ifx\svgwidth\undefined%
    \setlength{\unitlength}{69.35bp}%
    \ifx\svgscale\undefined%
      \relax%
    \else%
      \setlength{\unitlength}{\unitlength * \real{\svgscale}}%
    \fi%
  \else%
    \setlength{\unitlength}{\svgwidth}%
  \fi%
  \global\let\svgwidth\undefined%
  \global\let\svgscale\undefined%
  \makeatother%
  \begin{picture}(1,1.38428262)%
    \put(0,0){\includegraphics[width=\unitlength]{jwproj.pdf}}%
  \end{picture}%
\endgroup%
}}}{\vcenter{\hbox{\def\svgwidth{.020\columnwidth} %% Creator: Inkscape inkscape 0.48.4, www.inkscape.org
%% PDF/EPS/PS + LaTeX output extension by Johan Engelen, 2010
%% Accompanies image file 'jwproj.pdf' (pdf, eps, ps)
%%
%% To include the image in your LaTeX document, write
%%   \input{<filename>.pdf_tex}
%%  instead of
%%   \includegraphics{<filename>.pdf}
%% To scale the image, write
%%   \def\svgwidth{<desired width>}
%%   \input{<filename>.pdf_tex}
%%  instead of
%%   \includegraphics[width=<desired width>]{<filename>.pdf}
%%
%% Images with a different path to the parent latex file can
%% be accessed with the `import' package (which may need to be
%% installed) using
%%   \usepackage{import}
%% in the preamble, and then including the image with
%%   \import{<path to file>}{<filename>.pdf_tex}
%% Alternatively, one can specify
%%   \graphicspath{{<path to file>/}}
%% 
%% For more information, please see info/svg-inkscape on CTAN:
%%   http://tug.ctan.org/tex-archive/info/svg-inkscape
%%
\begingroup%
  \makeatletter%
  \providecommand\color[2][]{%
    \errmessage{(Inkscape) Color is used for the text in Inkscape, but the package 'color.sty' is not loaded}%
    \renewcommand\color[2][]{}%
  }%
  \providecommand\transparent[1]{%
    \errmessage{(Inkscape) Transparency is used (non-zero) for the text in Inkscape, but the package 'transparent.sty' is not loaded}%
    \renewcommand\transparent[1]{}%
  }%
  \providecommand\rotatebox[2]{#2}%
  \ifx\svgwidth\undefined%
    \setlength{\unitlength}{69.35bp}%
    \ifx\svgscale\undefined%
      \relax%
    \else%
      \setlength{\unitlength}{\unitlength * \real{\svgscale}}%
    \fi%
  \else%
    \setlength{\unitlength}{\svgwidth}%
  \fi%
  \global\let\svgwidth\undefined%
  \global\let\svgscale\undefined%
  \makeatother%
  \begin{picture}(1,1.38428262)%
    \put(0,0){\includegraphics[width=\unitlength]{jwproj.pdf}}%
  \end{picture}%
\endgroup%
}}}
\newcommand{\vcenter{\hbox{\def\svgwidth{.020\columnwidth} %% Creator: Inkscape inkscape 0.48.4, www.inkscape.org
%% PDF/EPS/PS + LaTeX output extension by Johan Engelen, 2010
%% Accompanies image file '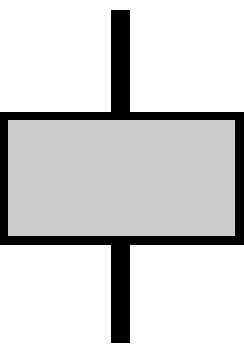' (pdf, eps, ps)
%%
%% To include the image in your LaTeX document, write
%%   \input{<filename>.pdf_tex}
%%  instead of
%%   \includegraphics{<filename>.pdf}
%% To scale the image, write
%%   \def\svgwidth{<desired width>}
%%   \input{<filename>.pdf_tex}
%%  instead of
%%   \includegraphics[width=<desired width>]{<filename>.pdf}
%%
%% Images with a different path to the parent latex file can
%% be accessed with the `import' package (which may need to be
%% installed) using
%%   \usepackage{import}
%% in the preamble, and then including the image with
%%   \import{<path to file>}{<filename>.pdf_tex}
%% Alternatively, one can specify
%%   \graphicspath{{<path to file>/}}
%% 
%% For more information, please see info/svg-inkscape on CTAN:
%%   http://tug.ctan.org/tex-archive/info/svg-inkscape
%%
\begingroup%
  \makeatletter%
  \providecommand\color[2][]{%
    \errmessage{(Inkscape) Color is used for the text in Inkscape, but the package 'color.sty' is not loaded}%
    \renewcommand\color[2][]{}%
  }%
  \providecommand\transparent[1]{%
    \errmessage{(Inkscape) Transparency is used (non-zero) for the text in Inkscape, but the package 'transparent.sty' is not loaded}%
    \renewcommand\transparent[1]{}%
  }%
  \providecommand\rotatebox[2]{#2}%
  \ifx\svgwidth\undefined%
    \setlength{\unitlength}{70.15765381bp}%
    \ifx\svgscale\undefined%
      \relax%
    \else%
      \setlength{\unitlength}{\unitlength * \real{\svgscale}}%
    \fi%
  \else%
    \setlength{\unitlength}{\svgwidth}%
  \fi%
  \global\let\svgwidth\undefined%
  \global\let\svgscale\undefined%
  \makeatother%
  \begin{picture}(1,1.44816701)%
    \put(0,0){\includegraphics[width=\unitlength]{jwprojp.pdf}}%
  \end{picture}%
\endgroup%
}}}{\vcenter{\hbox{\def\svgwidth{.020\columnwidth} %% Creator: Inkscape inkscape 0.48.4, www.inkscape.org
%% PDF/EPS/PS + LaTeX output extension by Johan Engelen, 2010
%% Accompanies image file 'jwprojp.pdf' (pdf, eps, ps)
%%
%% To include the image in your LaTeX document, write
%%   \input{<filename>.pdf_tex}
%%  instead of
%%   \includegraphics{<filename>.pdf}
%% To scale the image, write
%%   \def\svgwidth{<desired width>}
%%   \input{<filename>.pdf_tex}
%%  instead of
%%   \includegraphics[width=<desired width>]{<filename>.pdf}
%%
%% Images with a different path to the parent latex file can
%% be accessed with the `import' package (which may need to be
%% installed) using
%%   \usepackage{import}
%% in the preamble, and then including the image with
%%   \import{<path to file>}{<filename>.pdf_tex}
%% Alternatively, one can specify
%%   \graphicspath{{<path to file>/}}
%% 
%% For more information, please see info/svg-inkscape on CTAN:
%%   http://tug.ctan.org/tex-archive/info/svg-inkscape
%%
\begingroup%
  \makeatletter%
  \providecommand\color[2][]{%
    \errmessage{(Inkscape) Color is used for the text in Inkscape, but the package 'color.sty' is not loaded}%
    \renewcommand\color[2][]{}%
  }%
  \providecommand\transparent[1]{%
    \errmessage{(Inkscape) Transparency is used (non-zero) for the text in Inkscape, but the package 'transparent.sty' is not loaded}%
    \renewcommand\transparent[1]{}%
  }%
  \providecommand\rotatebox[2]{#2}%
  \ifx\svgwidth\undefined%
    \setlength{\unitlength}{70.15765381bp}%
    \ifx\svgscale\undefined%
      \relax%
    \else%
      \setlength{\unitlength}{\unitlength * \real{\svgscale}}%
    \fi%
  \else%
    \setlength{\unitlength}{\svgwidth}%
  \fi%
  \global\let\svgwidth\undefined%
  \global\let\svgscale\undefined%
  \makeatother%
  \begin{picture}(1,1.44816701)%
    \put(0,0){\includegraphics[width=\unitlength]{jwprojp.pdf}}%
  \end{picture}%
\endgroup%
}}}
\newcommand{\sgn}{\text{sgn}}
\newcommand{\db}[1]{[\![ #1 ]\!]}
\title{A trivial tail homology for non $A$-adequate links}
\author[C. Lee]{Christine Ruey Shan Lee}
\address[]{Department of Mathematics, University of Texas at Austin, Austin TX 78712}
\email[]{clee@math.utexas.edu}
\thanks{Lee was supported in part by NSF grant MSPRF-DMS 1502860.}
\begin{document}

\begin{abstract} We prove a conjecture of Rozansky's concerning his categorification of the tail of the colored Jones polynomial for an $A$-adequate link. We show that the tail homology groups he constructs are trivial for non $A$-adequate links. 
\end{abstract}

\maketitle
\tableofcontents

\section{Introduction}

Let $D$ be a diagram of a link $K$ in $S^3$. A \emph{Kauffman state} is a choice of replacing every crossing of $D$ by the $A$- or $B$-resolution as in Figure \ref{fig:abres}, with the dashed segment recording the location of the crossing before the replacement.

\begin{figure}[ht]
\centering
\includegraphics{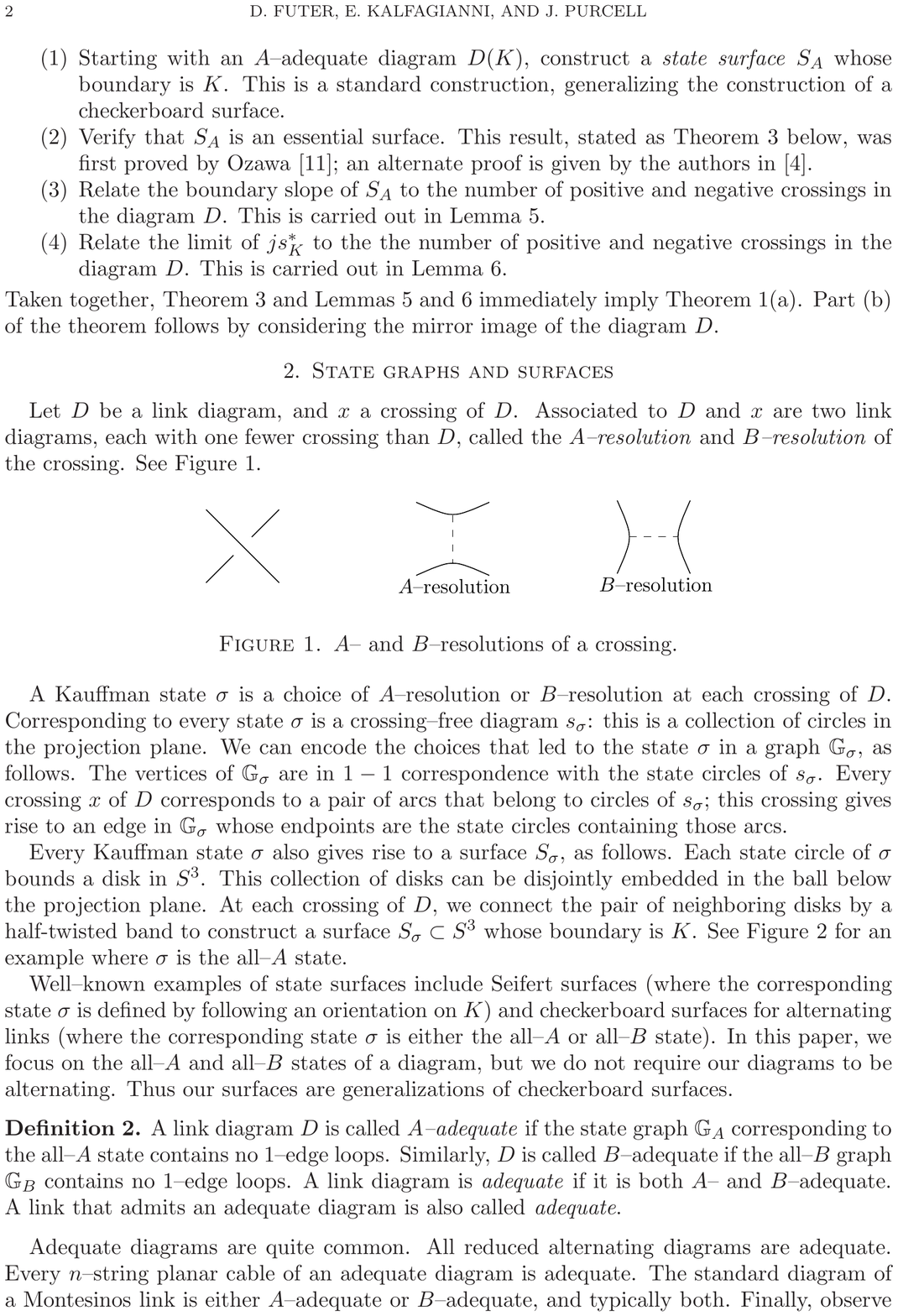}
\caption{$A$- and $B$-resolutions of a crossing.}
\label{fig:abres}
\end{figure} 

Applying a Kauffman state results in a set of disjoint circles called \emph{state circles}. We form a $\sigma$-\emph{state graph} $s_{\sigma}(D)$ for each Kauffman state $\sigma$ by letting the resulting state circles be vertices and the segments be edges. The \emph{all-$A$} state graph $s_A(D)$ comes from the Kauffman state which chooses the $A$ resolution at every crossing of $D$. 

\begin{defn}\label{defn:adequate-diagram} A link diagram $D$ is \emph{$A$-adequate} if its all-$A$ state graph has no one-edged loops. 
\end{defn} 

\begin{figure}[H]
\def\svgwidth{.6\columnwidth}
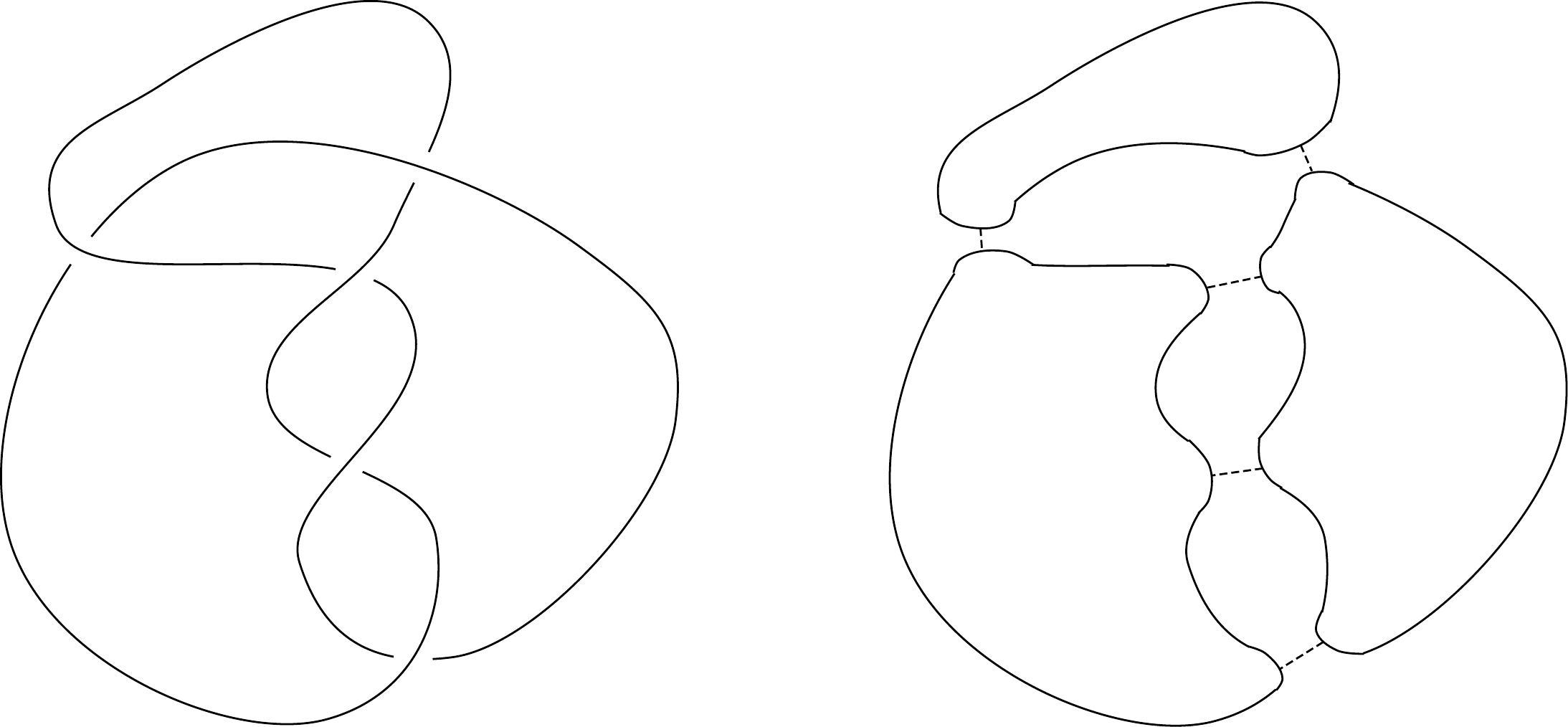
\caption{An adequate diagram and its all-$A$ state.}
\end{figure}

\begin{figure}[H]
\def\svgwidth{.6\columnwidth}
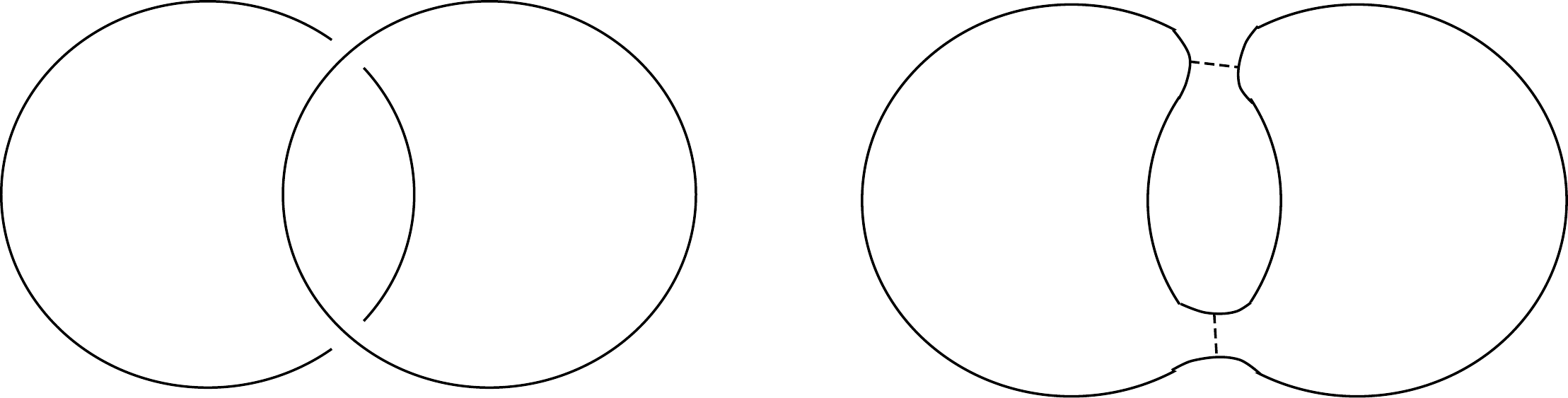
\caption{A non $A$-adequate diagram of the two component unlink. A crossing corresponding to a one-edged loop in $s_A(D)$ is marked $\ell$.}
\end{figure}

A link $K$ is said to be \emph{$A$-adequate} if it admits an $A$-adequate diagram. Every link diagram of a non $A$-adequate link is not $A$-adequate. A diagram is \emph{$B$-adequate} if its mirror image is $A$-adequate.  
\begin{defn} \label{defn:adequate} A link $K$ is \emph{semi-adequate} (\emph{$A$- or $B$-adequate}) if it admits a diagram that is $A$- or $B$-adequate. 
\end{defn} 

First studied by Lickorish and Thistlethwaite \cite{LT88}, \emph{semi-adequate} links form a rich class of links which includes alternating links. Denote the colored Jones polynomial of a link $K\subset S^3$ by $\{J_K(q; n)\}_{n=1}^{\infty}$, where $J_K(q;n) \in \mathbb{Z}[q, q^{-1}]$, and $J_K(q;1)$ is the unreduced Jones polynomial of $K$. See Definition \ref{defn:cp} for our normalization convention. Armond \cite{Arm} and Garoufalidis and Le \cite{GL15} have independently shown the following result. It states a stability property, first conjectured in \cite{DL06} with partial evidence, of the colored Jones polynomial of a semi-adequate link. 

Let $d(n)$ be the minimum degree of the $n$th colored Jones polynomial $J_K(q;n)$.
\begin{thm}[\cite{Arm, GL15}] \label{thm:stab}
 For $i > 1$, let $\beta_i$ be the coefficient of $q^{d(i)+2(i-2)}$ of $J_K(q;i)$. If $K$ admits an $A$-adequate diagram, then the coefficient of $q^{d(n)+2(i-2)}$  of $J_K(q;n)$ is equal to $\beta_i$ for all $n\geq i$.     
\end{thm}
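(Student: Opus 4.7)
The plan is to prove Theorem \ref{thm:stab} by a skein-theoretic state-sum argument using Jones-Wenzl projectors, following the strategy of Armond. Writing $D^{(n)}$ for the $n$-cable of $D$ with an $n$th Jones-Wenzl projector inserted on each component, the reduced colored Jones polynomial $J_K(q;n)$ is, up to an overall monomial in $A$, the Kauffman bracket $\langle D^{(n)} \rangle$ after the substitution $q = A^{-4}$. The starting point is to expand each Jones-Wenzl projector as a sum of the ``identity'' parallel-strand diagram plus a correction $T$ that is a linear combination of skein diagrams each containing at least one \emph{turnback} (a cup-cap pair joining adjacent parallel strands).

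First I would analyze the identity contribution: the Kauffman bracket of the pure $n$-cable $D_n$, without turnbacks. This is a sum over Kauffman states of $D_n$, and the all-$A$ state achieves the minimum $A$-degree. Since $A$-adequacy is preserved by cabling, $D_n$ is itself $A$-adequate, so the extremal monomial has nonzero coefficient. A standard degree count shows that any state obtained from the all-$A$ state by switching $k$ crossings of $D_n$ to $B$-resolutions lies at least $4k$ above the minimum in $A$-degree. Consequently the coefficient of $q^{d(n) + 2(i-2)}$ coming from $\langle D_n \rangle$ depends only on a combinatorial neighborhood of the all-$A$ state of size bounded in terms of $i$, and that neighborhood has the same local structure for every $n \geq i$.

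The next step is to control the turnback terms. Using the recursion defining $p_n$, I would show that inserting any turnback-containing diagram from $T$ into a strand of $D^{(n)}$ forces at least one extra $B$-resolution in every state contributing at low $A$-degree, relative to the all-$A$ state of $D_n$, and that the rational coefficients arising from the recursion do not change the leading $A$-degree. Iterating this estimate across the $n$ Jones-Wenzl insertions shows that the turnback contributions only affect coefficients of $q^{d(n) + 2j}$ for $j$ larger than a threshold growing with the number of turnbacks. Combining the identity and turnback estimates then yields that the coefficient of $q^{d(n) + 2(i-2)}$ in $J_K(q;n)$ is determined by the same local data for all $n \geq i$, hence equals $\beta_i$.

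The main obstacle will be the turnback degree estimate. Proving that every turnback strictly raises the contribution in $q$-degree, uniformly in $n$, requires a careful induction on the recursion for $p_n$, tracking both the rational coefficients in $[n]_q^{-1}$ and the way turnbacks interact with the all-$A$ state of $D_n$. The $A$-adequacy hypothesis enters precisely here: the absence of one-edged loops in $s_A(D)$ ensures that small local modifications of the all-$A$ state of $D_n$ are in bijection with collections of elementary moves on $s_A(D_n)$, which rules out accidental cancellations among near-extremal states and lets the local stabilization survive into the coefficient of $J_K(q;n)$.
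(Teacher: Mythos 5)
This theorem is imported into the paper by citation to Armond \cite{Arm} and to Garoufalidis--Le \cite{GL15}; the paper gives no proof of its own, so there is no in-paper argument to compare against. Your sketch is recognizably a summary of Armond's skein-theoretic route (expand each Jones--Wenzl projector into identity plus turnback terms, estimate degrees of the two pieces, and use $A$-adequacy to prevent cancellation at the extreme), whereas \cite{GL15} proceeds by a different $R$-matrix/state-sum route; so your outline does align with one of the two cited proofs.

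However, the central degree estimate as you state it is false, and it is a load-bearing step. You assert that any Kauffman state of the cable $D_n$ obtained from the all-$A$ state by switching $k$ crossings to $B$-resolutions lies at least $4k$ above the extreme in $A$-degree. For a single state the maximum $A$-degree is $a(\sigma) - b(\sigma) + 2(|\sigma| - 1)$; $A$-adequacy guarantees that the \emph{first} $B$-switch merges two circles and drops this quantity by $4$, but every subsequent $B$-switch may \emph{split} a circle, so $|\sigma|$ can climb back up and the drop can remain exactly $4$ no matter how large $k$ is. Thus there is no per-state bound that is linear in $k$, and the ``combinatorial neighborhood of size bounded in terms of $i$'' you invoke does not exist on the level of individual states. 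Armond's actual argument does not rely on such a bound: after isolating the identity-plus-turnback expansion of the projectors, the stabilization of the near-extremal coefficients is proved by passing to the \emph{reduced} all-$A$ state graph $G'_A$ (parallel edges identified), organizing the state sum around it, and running an induction that controls the sum of contributions rather than each state separately. Two smaller issues: you call $J_K(q;n)$ the reduced colored Jones polynomial, but the quantity in Theorem \ref{thm:stab} is the unreduced normalization of Definition \ref{defn:cp}; and the claim that $A$-adequacy passes to the cable $D_n$ is correct but exactly where the ``no one-edged loop'' hypothesis is used, so it should be argued rather than asserted. Without replacing the $\geq 4k$ estimate by the reduced-graph bookkeeping, the final stabilization claim does not follow from what is written.
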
  

For an $A$-adequate link $K$ one defines a power series 
\begin{equation} T_K(q) = \sum_{i=2}^{\infty} \beta_iq^{2(i-2)}, \label{eq:tail} \end{equation} called a \emph{tail} of the colored Jones polynomial of $K$. For a $B$-adequate link with $B$-adequate diagram $D$, the mirror image $\overline{D}$ is $A$-adequate, and we may apply Theorem \ref{thm:stab} to obtain a \emph{head} of the colored Jones polynomial, since $J_K(q; n) = J_{\overline{K}}(q^{-1}; n)$.   

Subsequent to \cite{Arm, GL15}, Rozansky \cite{Roz12} has shown that stability behaviors also exist in the categorification of the colored Jones polynomial. More precisely, let $\{ H_{i,j}^{Kh}(D, n) \}$ be the set of bi-graded chain groups constructed by Rozansky \cite{Roz10} which categorifies the colored Jones polynomial, i.e., 
\begin{equation} J_K(q; n)= ((-1)^nq^{\frac{n^2+2n}{2}})^{\omega(D)}\sum_{i, j}(-1)^j q^{i+j}\text{ dim } \Kh_{i, j}(D, n). \label{eq:echar} \end{equation}
See \cite{FKS06, CK12} for other and previous constructions of the categorification of the polynomial. In Section \ref{subsec:notkbracket}, we describe the convention used in this paper for the categorification of the colored Jones polynomial adapted from \cite{Roz12}. In \cite{Roz12}, Rozansky studies a shifted version of the categorifying chain groups, $\{ \widetilde{\Kh_{i, j}}(D, n) \}$, defined as follows.   
\[ \widetilde{\Kh_{i, j}}(D, n) := \textbf{h}^{\frac{1}{2}n^2c(D)} \textbf{q}^{n|s_A(D)|} \Kh_{i, j}(D, n), \]
where $c(D)$ is the number of crossings of $D$, $|s_A(D)|$ is the number of state circles in the all-$A$ state of $D$, and $\textbf{h}$, $\textbf{q}$ indicate the shifts to the homological and quantum grading, respectively. See Section \ref{subsec:shifted} for the equivalence of this definition to his. He shows that there exists a directed system of degree-preserving maps 
\[\widetilde{\Kh}(D, n) \stackrel{f_n}{\longrightarrow} \widetilde{\Kh}(D, n+1), \] where $f_n$ are isomorphisms on $\widetilde{\Kh_{i,*}}(D, n)$ for $i\leq n-1$, see Section \ref{sec:tailhomology} for details. This implies the existence of a \emph{tail homology} $H^{\infty}(D)$, which is defined as the direct limit of the directed system. 

Let 
\[ J_{D, \infty}(q):= 
\sum_{i, j} (-1)^j q^{i+j} \dim H_{i,j}^{\infty}(D) \] be the graded Euler characteristic of $H^{\infty}(D)$. For $A$-adequate links, Rozansky shows that the tail homology categorifies $T_K(q)$ since $J_{D, \infty}(q)$ determines the lower powers of $J_K(q; n)$ for $D$ an $A$-adequate diagram of $K$. See Theorem \ref{thm:adequatelower} in Section \ref{subsec:tail} for the precise statement.

He makes the following conjecture, which we adjust for the convention used in this paper, regarding the directed system thus constructed.

\begin{conj}[{\cite[Conjecture 2.14]{Roz12}}] \label{thm:main} If a diagram $D$ is not $A$-adequate, then $H^{\infty}(D)$ is trivial. 
\end{conj}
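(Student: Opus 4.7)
The plan is to leverage the defining feature of non-$A$-adequate diagrams --- the existence of a crossing $c$ of $D$ whose $A$-resolution forms a one-edged loop in $s_A(D)$ --- together with the key property of the categorified Jones--Wenzl projector that it absorbs turnbacks. I will show that the presence of $c$ forces the stabilization map $f_n$ to be null-homotopic in any fixed bi-degree once $n$ is sufficiently large, whence the direct limit $H^{\infty}(D)$ vanishes in every bi-degree.

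The first step is to fix a local model at $c$. After $n$-cabling $D$ and inserting categorified Jones--Wenzl projectors $P_n$ at each former strand, a neighborhood of $c$ becomes an $n\times n$ grid of crossings between two $n$-strand bundles. Because $c$ corresponds to a one-edged loop, both bundles terminate at (i.e.\ feed back into) the same state circle, so up to planar isotopy within the neighborhood the local tangle is a ladder of crossings attached to a cup-cap incident to $P_n$, that is, a turnback threaded through a projector. Rozansky's construction (and its precursors in Cooper--Krushkal) gives that $P_n$ composed with a turnback is null-homotopic; iterating this over successively many strands yields a chain-level contraction of this local piece in a range of homological degrees that grows linearly with $n$.

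The second step is to globalize. The complex $\widetilde{\Kh}(D,n)$ admits a tangle-level factorization through this local piece at $c$, and the stabilization map $f_n\colon \widetilde{\Kh}(D,n)\to \widetilde{\Kh}(D,n+1)$ is built from the canonical inclusion $P_n\hookrightarrow P_{n+1}$ strand-by-strand, so it respects this local/global decomposition. The local null-homotopy of the first step therefore assembles, in any fixed bi-degree $(i,j)$, into a null-homotopy of a sufficiently long composition $f_{n+k-1}\circ\cdots\circ f_n$ once $n$ is large enough that $(i,j)$ lies inside the contractible range. Passing to the direct limit over $n$ forces $H^{\infty}_{i,j}(D)=0$ for every $(i,j)$.

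The main obstacle will be step two: categorified projectors and the maps $f_n$ are only well-defined up to chain homotopy, so one must track the shifts $\mathbf{h}^{\frac{1}{2}n^2 c(D)}\mathbf{q}^{n|s_A(D)|}$ carefully in order to verify that the local contraction at $c$ survives global assembly in the expected bi-degree range. I would use the filtration of $\widetilde{\Kh}(D,n)$ by distance from the all-$A$ resolution, already implicit in \cite{Roz12}, to reduce to the associated graded, where the one-edged loop at $c$ appears as an explicit turnback-with-$P_n$ at a predictable page. Promoting the page-level cancellation back to a chain-level null-homotopy of $f_n$ and checking that the shifts in $\widetilde{\Kh}$ do not reshuffle the contractible range across fixed $(i,j)$ is the delicate part of the argument.
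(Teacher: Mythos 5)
There is a genuine gap, and it lies in the very first step. You claim that ``up to planar isotopy within the neighborhood the local tangle is a ladder of crossings attached to a cup-cap incident to $P_n$,'' but the one-edged-loop condition at $\ell$ is a \emph{global} property of the state circle structure of $s_A(D)$ (the two endpoints of the $A$-segment of $\ell$ lie on the same state circle), not a local feature of the crossing's neighborhood. Before the remaining crossings of $D^{n+1}$ are resolved, there is no turnback visible at $\ell^{n+1}$ to absorb into a projector. The paper therefore expands $\db{D^{n+1}_{\jwproj}}$ as an iterated mapping cone over Kauffman states on $C(D^{n+1})\setminus\ell^{n+1}$, and only after this expansion (Lemma~\ref{lem:disjointsk}) do the nontrivial state-skeins acquire the stereotyped form $\Sk^k_{\sigma}$ of Figure~\ref{fig:gform}. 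Crucially these $\Sk^k_{\sigma}$ are \emph{not} null-homotopic; the sequence of local transformations $M_I, M_{II}, M_{III}$ only establishes that $\widetilde{\Kh_{i,*}}(\Sk^k_{\sigma})$ vanishes in the range $0\leq i\leq n-k$, by comparison with the cabled unknot $U^{n+1-k}$. Your proposed chain-level null-homotopy of the local piece would overshoot what is actually true.

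The second missing ingredient, and the place where the one-edged-loop condition actually enters, is the combinatorial lemma at the end of the paper's proof: if $\Sk^k_{\sigma}$ appears with $k$ through-strands, then the state $\sigma$ must use at least $k$ $B$-resolutions on $C(D^{n+1})\setminus\ell^{n+1}$, so $c=\sgn(\sigma)-\sgn(\sigma_A)\geq k$. This homological shift by $c$ is what compensates for the shortfall in the vanishing range $i\leq n-k$; without it, for large $k$ the contributing state-skeins would not be forced to vanish at $i\leq n$, and the argument cannot close. Your proposal contains nothing that plays this role. Finally, note that your target of showing $f_n$ ``null-homotopic in a fixed bi-degree'' conflicts with Theorem~\ref{thm:tail-map}: since $f_n$ is an isomorphism on $\widetilde{\Kh_{i,*}}$ for $i\leq n-1$, it can be zero there only if those homology groups already vanish. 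The correct target, and what the paper proves via Corollary~\ref{cor:directlimit}, is the vanishing of the groups $\widetilde{\Kh_{i,*}}(D,n+1)$ for $i\leq n$.
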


We are motivated by the following result which provides partial evidence to Conjecture \ref{thm:main}.

Let
\begin{equation}  h_n(D)= -\frac{n^2}{2}c(D) - n|s_{A}(D)| + \omega(D)\frac{n^2+2n}{2} \label{eq:upperbound}, \end{equation} where  $\omega(D)$ is the writhe of the diagram.

\begin{thm}[\cite{Lee14}] \label{thm:tail} Suppose that a link diagram $D$ is not $A$-adequate, then
\[ d(n) \geq h_n(D) + 2(n-1) \text{ for } n > 1. \]  
\end{thm}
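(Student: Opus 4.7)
The plan is to compute $J_K(q;n)$ via the Kauffman bracket of the appropriate blackboard cable of $D$ with a Jones-Wenzl projector inserted on each component, and to show that the failure of $A$-adequacy forces the lowest-degree contributions in the resulting state sum to cancel.

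First, I would expand each Jones-Wenzl projector using its recursive definition, writing it as a sum over Temperley-Lieb diagrams with coefficients in $\mathbb{Q}(q)$. For each such diagram $T$, the evaluation $\langle \mathrm{cable}(D) \cdot T \rangle$ expands as a classical Kauffman state sum over the cabled diagram. Using the standard degree bounds for the Kauffman bracket — each $A$-resolution, $B$-resolution, and state circle contributes a controlled minimum $q$-power, and switching an $A$-resolution to a $B$-resolution raises the minimum possible $q$-degree by at least $2$ — I would verify that the overall minimum possible degree across all projector terms and all Kauffman states is precisely $h_n(D)$, achieved only by the identity term of the projector paired with the all-$A$ resolution of the cable.

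Second, I would show that when $D$ is not $A$-adequate, this putative minimum contribution actually vanishes. Fix a crossing $\ell$ producing a one-edged loop in $s_A(D)$. In the cable, the $A$-resolution at all copies of $\ell$ produces a nested family of arcs that returns a strand to an adjacent Jones-Wenzl projector. Since a Jones-Wenzl projector annihilates any tangle containing a cap or cup on its boundary, the entire $h_n(D)$-degree contribution is killed.

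Third, I would bound the next-smallest surviving contribution. Any state or projector term avoiding the turnback at $\ell$ must either switch at least one cable-crossing resolution away from $A$ near $\ell$ or must involve a non-identity Temperley-Lieb generator in the local expansion of a nearby projector; a careful accounting shows that such modifications must shift the minimum $q$-degree by at least $2(n-1)$ in total before a non-cancelling diagram appears, yielding $d(n) \geq h_n(D) + 2(n-1)$. The main obstacle is this last step, where one must simultaneously control the combinatorial changes in state-circle counts under switched resolutions and the algebraic relations forced by the Jones-Wenzl projectors, and verify that every near-minimum configuration either vanishes or incurs the full $2(n-1)$ degree shift.
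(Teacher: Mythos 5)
This theorem is not proved in the present paper; it is quoted from \cite{Lee14} as background motivation for Conjecture~\ref{thm:main}. What the paper does prove is the categorical strengthening, Theorem~\ref{cor:tail-trivial}, which via Corollary~\ref{cor:nacjp} \emph{recovers} the polynomial-level bound of Theorem~\ref{thm:tail} for large $n$. So there is no in-paper proof of the stated inequality for you to match; the right comparison is with the proof strategy for the categorical result.

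Your overall plan shares the paper's core insight: locate the crossing $\ell$ whose all-$A$ resolution produces a one-edged loop, and use the fact that the Jones-Wenzl projector annihilates any turnback to kill the nominal minimum-degree contribution at $h_n(D)$. That part is sound and is indeed what drives both the argument in \cite{Lee14} and the homological argument here. However, there are two issues worth flagging. First, your steps 2 and 3 are not quite compatible as written: in step 2 you claim the minimum degree is achieved ``only by the identity term of the projector paired with the all-$A$ resolution,'' but in step 3 you invoke the annihilation property $\jwproj_n\cdot e_n^i \sim 0$, which applies to the \emph{full} projector, not to its identity term. The correct version of the argument first applies the all-$A$ Kauffman state to the cable (leaving the projectors intact), then observes that the one-edged loop forces a cup/cap against a projector, and only then appeals to annihilation; it is not a statement about the identity summand of $\jwproj_n$ alone.

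The more substantial gap is your step 4. Saying ``a careful accounting shows'' that every surviving configuration incurs a shift of at least $2(n-1)$ is, in this problem, the entire content of the theorem, and the global Kauffman-state expansion you set up does not by itself isolate where that shift comes from: changing a single resolution somewhere far from $\ell$ changes the degree by only $2$, not $2(n-1)$, and the cancellations near $\ell$ propagate through the nested cable strands in a way that is hard to see from a raw state sum plus generic degree estimates. What makes the argument tractable, both in \cite{Lee14} and in the categorical proof in Section~\ref{sec:conj} of this paper, is a \emph{localization}: decompose over Kauffman states on $C(D^{n+1})\setminus\ell^{n+1}$, observe (Lemma~\ref{lem:disjointsk}) that each surviving summand is isotopic to a specific skein $\Sk^k$ together with disjoint circles, and then reduce $\Sk^k$ step by step (via the local transformations $M_I, M_{II}, M_{III}$) to the $(n+1-k)$-cabled unknot with a left-hand twist. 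The twist carries a degree shift $\mathbf{h}^{-\frac{1}{2}a^2}\mathbf{q}^{-a}$ with $a = n+1-k$ (equation~\eqref{eq:twist}), and $\tfrac12 a^2 > a-1$ is exactly the quadratic-vs-linear comparison that produces the $2(n-1)$ gap once one also shows $c \geq k$ for each state. Your proposal would need to carry out an analogous reduction at the polynomial level; without it, the claimed bound in step 4 is an assertion rather than a proof.
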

Let $h(n)$ be the maximum of $h_n(D)$ taken over all diagrams $D$ of a link. Theorem \ref{thm:tail} shows that the difference between the lower bound $h(n)$ and the actual degree of the $n$th colored Jones polynomial increases linearly with $n$, which would be a consequence of Conjecture \ref{thm:main} for large $n$. 

\subsection{Main result}
In this paper we prove Conjecture \ref{thm:main}, and therefore, the categorical analogue of Theorem \ref{thm:tail}. The result sheds some light on the behavior of the categorification of the colored Jones polynomial when the link is not $A$-adequate. Prior to Theorem \ref{thm:tail}, it was not known whether a link may still achieve the lower bound of $h(n)$ when it is not $A$-adequate. The result of this paper shows that the categorification has a similar gap, linear with respect to $n$, between the lower bound $-\frac{n^2}{2}c(D)$ of the minimum homological degree of the complex, and the actual minimum homological degree. 

The precise implication for the colored Jones polynomial is given by the following corollary.
\begin{cor} \label{cor:nacjp} 
The graded Euler characteristic of the tail homology determines the lower powers of $q$ in the unicolored Jones polynomial of a non $A$-adequate link $K$: 
\[J_K(q; n)=\left((-1)^nq^{\frac{n^2+2n}{2}}\right)^{\omega(D)} q^{-\frac{1}{2}n^2c(D)-n|s_A(D)|}\left(J_{D, \infty}(q) + O(q^{\frac{1}{2}n-\frac{1}{2}c(D)-\frac{3}{2}c(D)^{\ell}}) \right), \] where $c(D)^{\ell}$ is the number of non $A$-adequate crossings of $D$ whose corresponding segments in $s_A(D)$ are one-edged loops.
If $D$ is a diagram realizing the lower bound 
\[ h(n) := \max_{D \text{ of }K}\left\{ -\frac{n^2}{2}c(D)-n|s_A(D)|+ \omega(D)\frac{n^2+2n}{2}\right\}, \] then 
\[J_K(q; n)=(-1)^{n(\omega(D))}q^{h(n)}\left(J_{D, \infty}(q) + O(q^{\frac{1}{2}n-\frac{1}{2}c(D)-\frac{3}{2}c(D)^{\ell}}) \right).\] 
\end{cor}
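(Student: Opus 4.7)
The plan is to deduce the corollary from the proved main theorem (Conjecture \ref{thm:main}) via the definition of the shifted homology and Rozansky's directed system, then to extract the precise error term from a local degree count on the chain complex.

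First I would recast Equation \eqref{eq:echar} in terms of the shifted groups. Using
\[
\Kh_{i,j}(D,n) = \widetilde{\Kh}_{\,i + \tfrac{1}{2}n^2 c(D),\; j + n|s_A(D)|}(D,n)
\]
and reindexing the sum yields
\[
J_K(q;n) = \left((-1)^n q^{\frac{n^2+2n}{2}}\right)^{\omega(D)} q^{-\frac{1}{2}n^2 c(D) - n|s_A(D)|} \sum_{i,j}(-1)^{j}q^{i+j}\dim\widetilde{\Kh}_{i,j}(D,n),
\]
after absorbing the auxiliary sign $(-1)^{n|s_A(D)|}$ produced by the reindexing into the overall prefactor.

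Next I would apply the stability of $f_n$: since $f_n$ is an isomorphism on $\widetilde{\Kh}_{i,*}(D,n)$ for $i \le n-1$, the direct limit identifies these groups with $H^{\infty}_{i,*}(D)$. By Theorem \ref{thm:main}, $H^{\infty}(D) = 0$ for non $A$-adequate $D$, so $J_{D,\infty}(q) = 0$ and $\widetilde{\Kh}_{i,*}(D,n) = 0$ for every $i \le n-1$. The inner sum reduces to the contribution from $i \ge n$, and the corollary is now equivalent to showing that this contribution is $O(q^{\frac{1}{2}n - \frac{1}{2}c(D) - \frac{3}{2}c(D)^\ell})$.

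Finally, I would prove the required lower bound of $\tfrac{1}{2}n - \tfrac{1}{2}c(D) - \tfrac{3}{2}c(D)^\ell$ on the minimum of $i+j$ over nonzero bigradings of $\widetilde{\Kh}(D,n)$ with $i \ge n$. I expect this to be the main obstacle: it amounts to a local degree count on the chain complex $\widetilde{C}^{Kh}(D,n)$ of Section \ref{subsec:notkbracket}, in which each $A$-adequate crossing depresses the minimum quantum grading by at most $\tfrac{1}{2}$ per unit of $n$, while each non $A$-adequate crossing---whose dashed segment is a one-edged loop in $s_A(D)$---forces an extra turn-back through the Jones--Wenzl projector that depresses the minimum by a further $\tfrac{3}{2}$. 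Summing these local contributions over the $c(D)$ crossings, including the $c(D)^\ell$ non-adequate ones, yields the exponent in the $O$-term. The second formula in the statement then follows immediately by specializing $D$ to a diagram realizing $h(n) = \max_D h_n(D)$.
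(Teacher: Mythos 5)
The key step you flag as ``the main obstacle'' --- the bound $i+j \geq \frac{1}{2}n - \frac{1}{2}c(D) - \frac{3}{2}c(D)^\ell$ on the unstable range $i \geq n$ --- is exactly the content of Rozansky's Theorem \ref{thm:bounds}, which the paper quotes and uses directly. From $\widetilde{\Kh_{i,j}}(D,n) = 0$ whenever $j < -\frac{1}{2}i - \frac{1}{2}c(D) - \frac{3}{2}c(D)^\ell$, the contrapositive gives $i + j \geq \frac{1}{2}i - \frac{1}{2}c(D) - \frac{3}{2}c(D)^\ell \geq \frac{1}{2}n - \frac{1}{2}c(D) - \frac{3}{2}c(D)^\ell$ whenever $i \geq n$ and the group is nonzero. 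The paper's proof of the corollary is precisely this observation, combined with the identification of the stable range $i \leq n-1$ with the tail homology via Corollary \ref{cor:directlimit}, in direct analogy with the $A$-adequate case discussed just before Theorem \ref{thm:adequatelower}. Your proposed ``local degree count'' is in effect an attempt to reprove Theorem \ref{thm:bounds} from scratch; as written it is only a sketch --- the claim that each non $A$-adequate crossing depresses the minimum by an extra $\frac{3}{2}$ is asserted rather than argued --- and it is not needed once the theorem is cited.

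A second, smaller point: the corollary does not logically depend on Theorem \ref{thm:main}. It holds for any diagram from Theorem \ref{thm:bounds} alone, with $J_{D,\infty}(q)$ collecting the stable part; the paper invokes Theorem \ref{thm:main} only afterward to conclude that $J_{D,\infty}(q)=0$. Your step 2 threads Theorem \ref{thm:main} into the middle of the argument, which inverts the paper's logical order and means you are proving the consequence (with $J_{D,\infty}(q)$ already replaced by $0$) rather than the stated formula. Finally, the sign $(-1)^{n|s_A(D)|}$ produced by your reindexing is absorbed without justification; since it is not visible in the corollary's prefactor, that step needs an explanation of the sign conventions in play.
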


This is a straightforward consequence of the general homological bounds in Theorem \ref{thm:bounds} by Rozansky. Theorem \ref{thm:main} then implies that $J_{D, \infty}(q) = 0$ for a non $A$-adequate link, and we recover Theorem \ref{thm:tail} in the sense that the gap between the actual degree $d(n)$ of the polynomial and the lower bound $h_n(D)$ increases linearly with respect to $n$ for sufficiently large $n$. 

Given a non-$A$ adequate diagram, which necessarily has a crossing $\ell$ corresponding to a one-edged loop in $s_A(D)$, the key to the argument is localizing the behavior of the tail homology to the categorification of a skein $\Sk^k$ containing $\ell$ shown below in Figure \ref{fig:gformp}. With the help of the homological machinery developed by Rozansky in \cite{Roz12}, the homology of this skein is approximated by that of the unknot with a left-hand twist, which has trivial homology in homological gradings $\leq n-1$.  

\begin{figure}[H]
\centering
    \def\svgwidth{.3\columnwidth}
    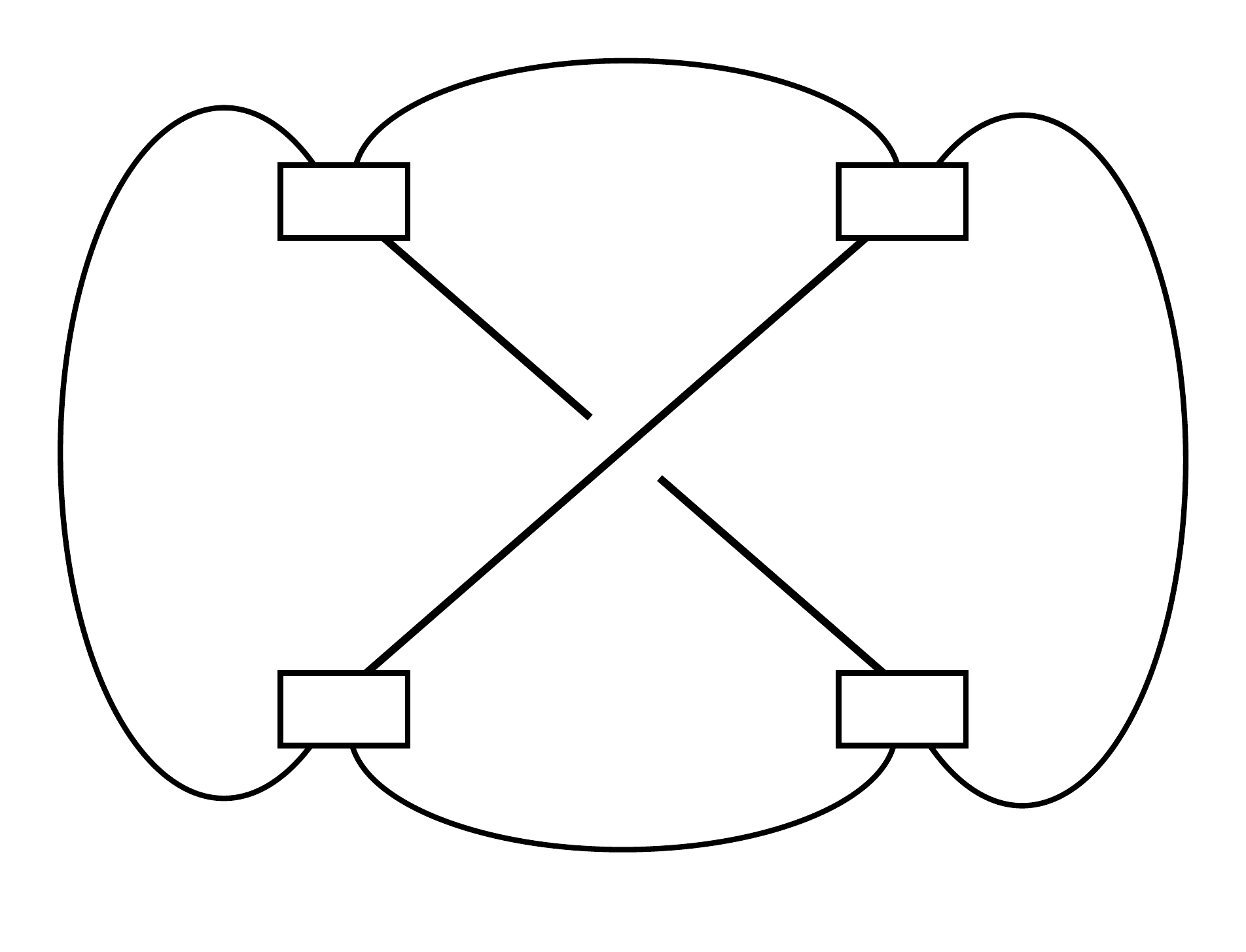
    \caption{\label{fig:gformp} The skein $\Sk^k$. The notation $\ell^{n+1}$ indicates the set of $(n+1)^2$ crossings corresponding to $\ell$ in the $n+1$-blackboard cable of $D$.}
\end{figure}

\subsection{Relation to a general tail}
As for the question of stability behaviors for all links along the lines of Theorem \ref{thm:stab}, a tail may be defined for a general link, if it exists, as follows from \cite{AD2} with our conventions.  
\begin{defn}For a Laurent polynomial $P_1(q)$ and a power series $P_2(q)$ we define 
\[P_1(q) \doteq_{2n} P_2(q) \] if  $P_1(q)$ coincides with $P_2(q) \pmod{ q^{2n}}$ up to multiplication with $\pm q^{s}$, for $s\in \Z$.  
\end{defn}  

\begin{defn} The \emph{tail} of the colored Jones polynomial of a knot $K$, if it exists, is a series $T_K(q) = \sum_{j=2}^{\infty}  a_jq^{2(j-2)}$ with 
\[J_K(q; n) \doteq_{2n} T_K(q) \text{\ for all $n$}. \]
\end{defn} 

The existence and behavior of a tail $T_K(q)$ for a general link remains an interesting and important question. It is the hope that the techniques of this paper may be applied to the study of a categorification of $T_K(q)$. Note that Conjecture \ref{thm:main} and Theorem \ref{thm:tail} do not provide information on the existence or the nature of $T_K(q)$, only that if $T_K(q)$ exists, it must occur at a distance, linear with respect to $n$, from the diagrammatic lower bound. For understanding $T_K(q)$, a persistent difficulty lies in determining $d(n)$ when a link is not $A$-adequate. In the case of torus knots where the colored Jones polynomial is explicitly computed, it has been shown that a tail does not exist, rather, multiple tails exist which indicates a more complex stability behavior of the polynomial \cite{AD2}. Although this is also expected to be reflected in the categorification of the colored Jones polynomial of torus links, a potentially simpler class to study would be non $A$-adequate links which are still expected to admit a single tail. In \cite{LV}, the author and Roland van der Veen have determined $d(n)$ for many non $A$-adequate 3-string pretzel knots. It is expected that a single tail exists for these knots, and we will address this topic for the polynomial and its categorification in the future using the techniques in this paper.

\subsection{Organization}
 Section \ref{sec:prelim} and Section \ref{sec:tailhomology} gather the necessary definitions for understanding the main result and the convention for the colored Jones polynomial and categorification used in this paper. In Section \ref{sec:tailhomology}, we describe Rozansky's definition of a tail homology. Conjecture \ref{thm:main} is proven in Section \ref{sec:conj} as Theorem \ref{cor:tail-trivial}.

\subsection{Acknowledgements}
The author would like to thank the referee for the careful reading  of this paper and many helpful comments which have greatly contributed to the quality of the paper in its final form. The author would also like to thank the organizers for the Quantum Topology and Hyperbolic Geometry Conference in Nha Trang, Vietnam, where the author first heard of Rozansky's work on the tail homology of the colored Jones polynomial.

\section{Background} \label{sec:prelim}

\subsection{Skein theory} \label{subsec:skein}
We will follow \cite{Lic97} in defining the Temperley-Lieb algebra, except he uses $A$, and we make the substitution of variable $A^2=q^{-1}$. This is for the convenience of not having to substitute $q$ for $A$ later and to avoid confusion with the ``A" in $A$-adequacy. 

Let $F$ be an orientable surface which has a finite (possibly empty) collection of points specified on $\partial F$ if $\partial F \not= \emptyset$. A link diagram on $F$ consists of finitely many arcs and closed curves on $F$ such that 
\begin{itemize}
\item There are finitely many transverse crossings with an over-strand and an under-strand. 
\item The endpoints of the arcs form a subset of the specified points on $\partial F$. In other words, the arcs are \emph{properly-embedded}. 
\end{itemize} 
Two link diagrams on $F$ are isotopic if they differ by a homeomorphism of $F$ isotopic to the identity. The isotopy is required to fix $\partial F$. 

\begin{defn}\label{defn:skein} Let $q$ be a fixed complex number. The \emph{linear skein} $\mathcal{S}(F)$ of $F$ is the vector space of formal linear sums over $\mathbb{C}$ of isotopy classes of link diagrams in $F$ quotiented by the relations 
\begin{enumerate}[(i)]
\item $D \cup \vcenter{\hbox{\includegraphics[scale=.10]{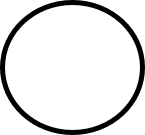}}} = (-q - q^{-1}) D$, and 
\item $ \vcenter{\hbox{\includegraphics[scale=.2]{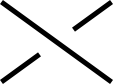}}} = q^{1/2} \ \vcenter{\hbox{\includegraphics[scale=.2]{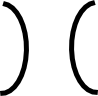}}} \ + q^{-1/2} \ \vcenter{\hbox{\includegraphics[scale=.2]{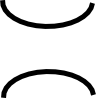}}} \ .$
\end{enumerate}  
\end{defn} 
We consider the linear skein $\mathcal{S}(D, n)$ of the disc $D$ with $2n$ points specified on its boundary, which will be represented as a square with $n$ marked points above and below. The square may be rotated with the marked points going from left to right. For $D_1, D_2 \in \Sk(D,n)$, there is a natural multiplication operation $D_1\cdot D_2$ defined by identifying the top boundary of $D_1$ with the bottom boundary of $D_2$.  This makes $\mathcal{S}(D, n)$ into an algebra $TL_n$, called the $n$th \emph{Temperley-Lieb algebra}. The algebra $TL_n$ is generated by crossing-less matchings $1_n, e^{1}_n, \ldots, e^{n-1}_{n}$. See Figure \ref{fig:TLgen} for an example. 
\begin{figure}[ht] 
\centering
\def\svgwidth{.7\columnwidth}
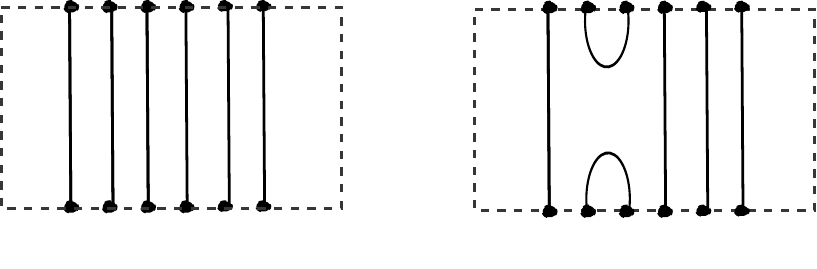
\caption{An example of the identity element $|_n$ and a generator $e^i_n$ of $TL_n$ for $n=6$ and $i=2$. \label{fig:TLgen}}
\end{figure}

We will use a shorthand notation which denotes $n$ parallel strands, the identity $1_n$, by $|_n$. 

Suppose that $q^2$ is not a $k$th root of unity for $k\leq n$. There is an element $\vcenter{\hbox{\def\svgwidth{.020\columnwidth} }}_n$ in $TL_n$ called the \emph{Jones-Wenzl projector}, which is uniquely defined  by the following properties. For the original reference where the projector was defined and studied, see \cite{Wen87}. 
\begin{enumerate}[(i)]
\item $\vcenter{\hbox{\def\svgwidth{.020\columnwidth} }}_n \cdot e^i_n = e^i_n \cdot \vcenter{\hbox{\def\svgwidth{.020\columnwidth} }} =0$ for $1 \leq i \leq n-1$. \label{list:prop1}
\item $\vcenter{\hbox{\def\svgwidth{.020\columnwidth} }}_n - |_n $ belongs to the algebra generated by $\{e^1_n, e^2_n,\ldots, e^{n-1}_n\}$. 
\item $\vcenter{\hbox{\def\svgwidth{.020\columnwidth} }}_n \cdot \vcenter{\hbox{\def\svgwidth{.020\columnwidth} }}_n = \vcenter{\hbox{\def\svgwidth{.020\columnwidth} }}_n$.
\item Let $\mathcal{S}(S^1 \times I)$ be the linear skein of  the annuli with no points marked on its boundaries. The image of \ $\vcenter{\hbox{\def\svgwidth{.020\columnwidth} }}_n$ in $\mathcal{S}(\mathbb{R}^2)$ obtained from first sending it to $\Sk(S^1\times I)$ by joining the $n$ boundary points on the top with those at the bottom, and then embedding $S^1\times I$ in $\mathbb{R}^2$, is equal to 
\[ \vcenter{\hbox{\includegraphics[scale=.2]{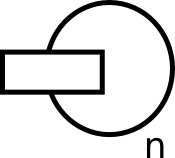}}} = (-1)^n[n] \cdot \langle \text{the empty diagram on $\mathbb{R}^2$} \rangle, \] \label{list:prop4}
\end{enumerate}
where $[n]$ is the \emph{quantum integer} defined by
\[ [n]:= \frac{q^{-(n+1)} - q^{n+1}}{q^{-1}-q}. \] 

From the defining properties, the Jones-Wenzl idempotent also satisfies a recursion relation \eqref{eq:jwrecursive} and another identity \eqref{eq:jwid} as indicated in Figure \ref{fig:jwrecursive} and Figure \ref{fig:jwid}. 
\begin{figure}[ht]
\centering
\begin{equation} \label{eq:jwrecursive}
\def\svgwidth{.7\columnwidth}
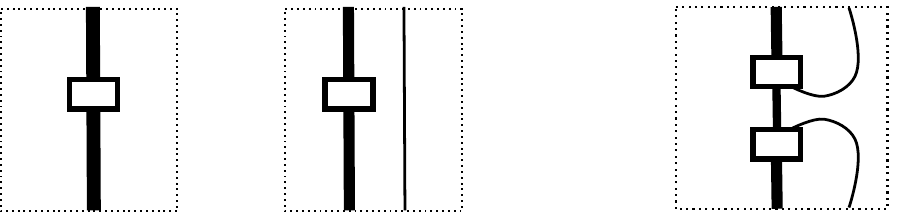
\end{equation}
\caption{\label{fig:jwrecursive}A recursive relation for the Jones-Wenzl idempotent.}
\end{figure} 
\begin{figure}[ht]
\begin{equation} \label{eq:jwid}
\def\svgwidth{.4\columnwidth}
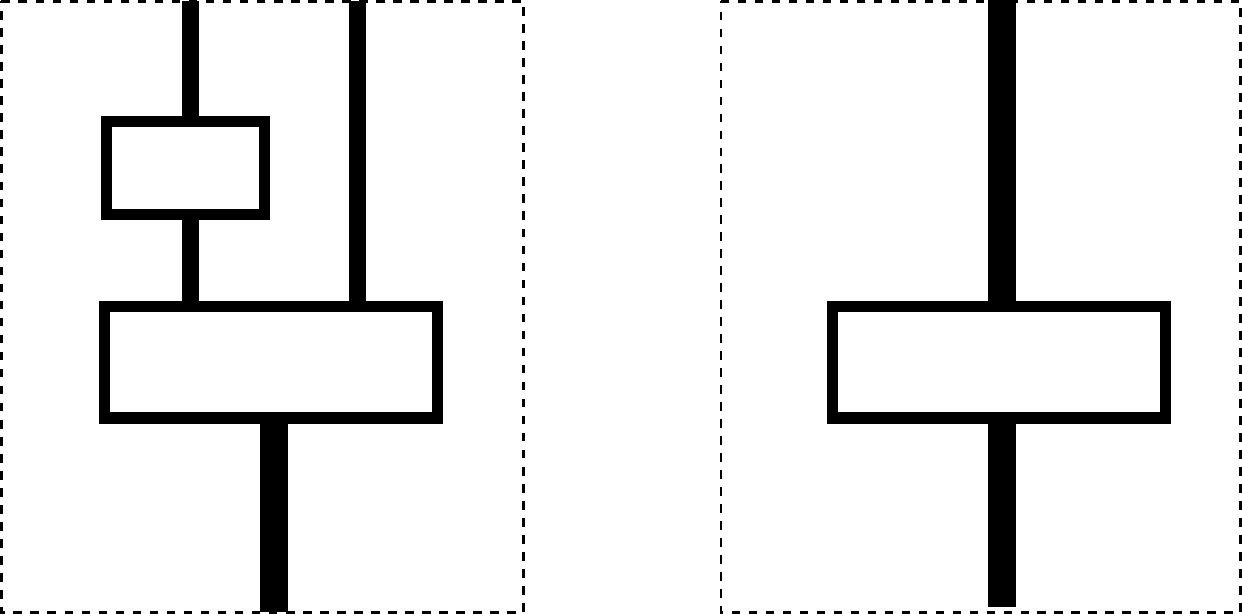
\end{equation}
\caption{\label{fig:jwid} An identity for the Jones-Wenzl idempotent.}
\end{figure} 

\subsection{The colored Jones polynomial}

Here we give a definition of the colored Jones polynomial in terms of skeins in the Temperley-Lieb algebra defined in Section \ref{subsec:skein}. For the original construction of the invariant based on the representation theory of $U_q(\mathfrak{sl}_2)$ for generic $q$, see \cite{RT90}. A discussion and proof of the equivalence of two definitions may be found in \cite{Cos14}. 

\begin{defn}\label{defn:cp}
Let $D$ be a diagram of a link $K\subset S^3$ with $k$ components. For each component $D_i$ for $i \in \{1,\ldots, k\}$ of $D$ take an annulus $A_i$ via the blackboard framing.
 Let 
\[ f_D: \underbrace{\Sk(S^1\times I) \times \cdots \times \Sk(S^1 \times I)}_{k \text{ times }} \rightarrow \Sk(\mathbb{R}^2),     \] be the map which sends a $k$-tuple of elements $(s_1, \ldots, s_k)$ to $S(\mathbb{R}^2)$ by immersing the collection of annuli containing the skeins in the plane such that the over- and under-crossings of $D$ are the over- and under-crossings of the annuli. 
 The \emph{$n$th unreduced colored Jones polynomial} $J_K(q; n)$ may be defined as 
\[ J_K(q; n) := ((-1)^nq^{\frac{n^2+2n}{2}})^{\omega(D)} \left\langle f_D\underbrace{\left(\vcenter{\hbox{\includegraphics[scale=.15]{jwprojc.png}}},  \vcenter{\hbox{\includegraphics[scale=.15]{jwprojc.png}}}, \cdots, \vcenter{\hbox{\includegraphics[scale=.15]{jwprojc.png}}} \right)}_{k \text{ times }} \right\rangle. \] 

See the right-hand side of Figure \ref{fig:tcompose} for an example where $D$ has $2$ components.
The Kauffman bracket here is extended by linearity and gives the polynomial multiplying the empty diagram after reducing the diagram via skein relations. Note that this gives $J_{\vcenter{\hbox{\includegraphics[scale=.05]{circ.png}}}}(q; 1) = (-1)^n[n]$ as the normalization. To simplify notation we will denote the skein $f_D \left(\vcenter{\hbox{\includegraphics[scale=.15]{jwprojc.png}}},  \vcenter{\hbox{\includegraphics[scale=.15]{jwprojc.png}}}, \cdots, \vcenter{\hbox{\includegraphics[scale=.15]{jwprojc.png}}} \right)$ from now on by $D^n_{\vcenter{\hbox{\def\svgwidth{.020\columnwidth} }}}$. 
\end{defn} 

We now describe the categorification of the colored Jones polynomial, which is a homology theory whose Euler characteristic is equal to the polynomial. Here we summarize the description of \cite{Bar05} and \cite{Roz12}. The differences in conventions between those used in this paper and those authors are explained at the beginning of each section.

\subsection{Khovanov bracket for tangles}
We follow the approach of \cite{Bar05} with a different grading convention: Our graded category has grading in half-integers, and  we do not consider complexes with bi-grading shifted by the number of positive and negative crossings of the tangle. This results in a chain complex that is invariant up to a shift under a framing change which is specified in Section \ref{subsec:catcjp}. For details, please consult \cite{Bar05}.

We work in the additive category $\mathcal{TL}$ constructed from the category $TL$ whose objects are finite sets of properly-embedded arcs with no crossings and simple closed curves in $\Sk(D, 2n)$ for $n\in \mathbb{Z}_+$, considered up to boundary-preserving isotopy. For two such skeins in $TL$, a morphism between them is a cobordism which is a compact 2-manifold whose boundary is the disjoint union of the two skeins. The composition of morphisms is given by placing one cobordism on top of another in the obvious way. From $TL$ see \cite{Bar05} for the details for the construction of $\mathcal{TL}$, where he shows that one can construct an additive category from any category. The process formally adds finite direct sums of $\mathbb{Z}$-linear combinations of objects and morphisms to $TL$ in such a way that the composition of morphisms is bilinear. 

From $\mathcal{TL}$ we consider the homotopy category of chain complexes over $\mathcal{TL}$ which are bounded from below denoted by $Kom^+(\mathcal{TL})$. Let $m \in \mathbb{Z}$ and $T_i$ be an object of $\mathcal{TL}$. An object of $Kom^+(\mathcal{TL})$ is a chain complex $(\textbf{T}, d_{\textbf{T}})$
\begin{equation} \label{eq:chain} \textbf{T} = \cdots \rightarrow T_{i+1} \stackrel{d_{i+1}}{\rightarrow} T_{i} \stackrel{d_{i}}{\rightarrow} \cdots  \rightarrow T_{m}, \end{equation}
considered up to homotopy equivalence $\sim$, and a morphism between two chain complexes is a chain map. We use the notation $\textbf{T}_i := T_i$. We further mod out the set of morphisms $Kom^+(\mathcal{TL})$ by the $S, T$, and $4Tu$-relations, and consider as in \cite{Bar05}, the \emph{graded} category $Kob(\mathcal{TL})$ where grading shifts of complexes are induced by cobordisms. 

To an $(n, n)$-tangle $T$ with crossings in $\Sk(D, n)$ is then associated a chain complex $(\textbf{T}, d_{\textbf{T}})$, where $T_{i}$'s are smoothings of $T$ and the boundary map $d_{\textbf{T}}$ comes from the saddle cobordism 
\[ \vcenter{\hbox{\includegraphics[scale=.2]{crossing2.png}}} \ \stackrel{s}{\rightarrow} \ \vcenter{\hbox{\includegraphics[scale=.2]{crossing3.png}}} \] from the $B$-resolution of the crossing $\vcenter{\hbox{\includegraphics[scale=.2]{crossing1.png}}}$ to the $A$-resolution. For a tangle $T\in TL_n$ we will often represent the associated complex $\mathbf{T}$ in $Kob(\mathcal{TL})$ by a picture of the tangle itself. We may compose the complexes associated to tangles in the fashion of planar algebras as also detailed in \cite{Bar05}. After the application of a TQFT this gives the Khovanov homology of tangles. We specify our grading conventions in Section \ref{subsec:notkbracket}.

\subsection{Notation and grading for the Khovanov bracket} \label{subsec:notkbracket}

Our grading conventions are adapted from \cite{Roz12} with a minor change of variables. The difference with Rozansky's convention in \cite{Roz12} is that his $\textbf{q}$ there is our $\textbf{q}^{-1}$ and his $\textbf{h}$ there is our $\textbf{h}^{-1}$. 

Let $\textbf{A}$ be a bi-graded chain complex $(A_{i,j}, d_A)$, and let $\textbf{A}[1]$ be the chain complex whose homological grading $i$ is shifted by 1 with differential $-d_A$, so $\textbf{A}[1]_i = A_{i-1}$, while $\textbf{A}\{1\}$ has the $j$-grading shifted by 1. Following Rozansky's convention in using a non-standard notation for the grading-shift, we will also use the notation $\mathbf{h}\mathbf{A}$ for $\textbf{A}[1]$, with 
\[\mathbf{h}^k\mathbf{A}=\mathbf{A}[k], \] and $\mathbf{q}\mathbf{A}$ for $\textbf{A}\{1\}$.
This will completely specify the changes to the bi-grading in what follows. 
Our grading convention is completely determined by the following presentation of the Khovanov bracket. Here $\mathbf{h}^{\pm \frac{1}{2}}$ simply shifts the $i$-degree of the complex by $\pm \frac{1}{2}$. 
\begin{enumerate}[(i)]
\item $\vcenter{\hbox{\includegraphics[scale=.15]{circ.png}}} = \textbf{q} \ \vcenter{\hbox{\includegraphics[scale=.2]{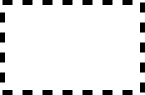}}}  + \textbf{q}^{-1} \ \vcenter{\hbox{\includegraphics[scale=.2]{blank.png}}}$ \ , where $\vcenter{\hbox{\includegraphics[scale=.2]{blank.png}}}$ is the empty skein, and
\item \label{lem:multicone} $ \vcenter{\hbox{\includegraphics[scale=.2]{crossing1.png}}} = \text{Cone}\left(\textbf{h}^{-\frac{1}{2}} \ \vcenter{\hbox{\includegraphics[scale=.2]{crossing2.png}}} \ \stackrel{s}{\rightarrow} \textbf{h}^{-\frac{1}{2}} \vcenter{\hbox{\includegraphics[scale=.2]{crossing3.png}}}\right)\ = \fbox{$\textbf{h}^{\frac{1}{2}} \ \vcenter{\hbox{\includegraphics[scale=.2]{crossing2.png}}} \rightarrow \textbf{h}^{-\frac{1}{2}}\vcenter{\hbox{\includegraphics[scale=.2]{crossing3.png}}} $} ,$ where $s$ is the saddle cobordism. Note that $\deg_{\mathbf{h}}(s)=-1$. 
\end{enumerate}

The notation $\text{Cone}\left(\mathbf{A} \stackrel{f}{\rightarrow} \mathbf{B} \right)$ for two chain complexes $(\mathbf{A}, d_{\mathbf{A}})$ and $(\mathbf{B}, d_{\mathbf{B}})$ indicates the complex of the mapping cone $\mathbf{h}\mathbf{A}\oplus \mathbf{B}$ with differential $d_f = \left[ \begin{array}{cc} -d_\textbf{A} & 0 \\ f & d_{\textbf{B}}  \end{array}\right]$. We will also use the following notation for the mapping cone. The map $f$ will be suppressed whenever it is clear. 

\begin{figure}[ht]
\def\svgwidth{.5\columnwidth}
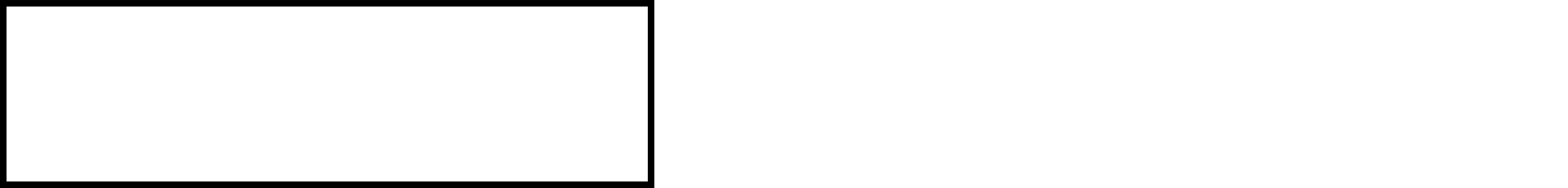
\end{figure}

\subsection{The chain complex for the Jones-Wenzl projector}

Rozansky constructs a chain complex in $Kob(\mathcal{TL})$, denoted here by $\vcenter{\hbox{\def\svgwidth{.020\columnwidth} }}_n$, which satisfies the following universal properties.

\begin{defn} \label{defn:mhd} Given a chain complex $\mathbf{T}:\cdots \rightarrow T_{i+1} \rightarrow T_{i} \rightarrow \cdots \rightarrow T_{m} $ in $Kob(\mathcal{TL})$, the \emph{minimum homological degree} of $\mathbf{T}$, denoted by $|\mathbf{T}|$, is $m$.  

\begin{thm}[{\cite[Theorem 2.7]{Roz10}}] \label{thm:uniprop}
Let $e^i_n$ be a generator of $TL_n$. 
The complex $\vcenter{\hbox{\def\svgwidth{.020\columnwidth} }}_n$ has the following properties: 
\begin{enumerate}[(i)]
\item The complex of a composition of $\vcenter{\hbox{\def\svgwidth{.020\columnwidth} }}_n$ with $e^i_n$ for all $i = 1,\ldots, n-1$ is contractible, i.e., \label{thmi:contractible}
\[ e^i_n \circ \vcenter{\hbox{\def\svgwidth{.020\columnwidth} }}_n \sim \vcenter{\hbox{\def\svgwidth{.020\columnwidth} }}_n \circ e^i_n \sim 0. \] 
\item The complex $\vcenter{\hbox{\def\svgwidth{.020\columnwidth} }}_n$ is idempotent with respect to tangle composition, i.e., 
\[ \vcenter{\hbox{\def\svgwidth{.020\columnwidth} }}_n\circ \vcenter{\hbox{\def\svgwidth{.020\columnwidth} }}_n \sim \vcenter{\hbox{\def\svgwidth{.020\columnwidth} }}_n. \] 
\item Let $\gamma \in TL_n$ be an $(n, n)$-tangle in the disk $D$ with $n$ marked points above and below. A through strand is an arc joining a point on the upper edge to a point on the lower edge. Let $|\gamma|$ denote the number of through strands of $\gamma$.

The complex $\vcenter{\hbox{\def\svgwidth{.020\columnwidth} }}_n$ has a presentation 
\[ \vcenter{\hbox{\def\svgwidth{.020\columnwidth} }}_n = \text{Cone}(\vcenter{\hbox{\def\svgwidth{.020\columnwidth} }} \stackrel{f}{\rightarrow} |_n),\] 
where 
\begin{equation} \vcenter{\hbox{\def\svgwidth{.020\columnwidth} }}_n = \cdots \rightarrow \textbf{h}^i\bigoplus_{0\leq j \leq i, \gamma \in TL_n, |\gamma| <n } \mu_{ij, \gamma} \textbf{q}^j \ \gamma \rightarrow \cdots,  \end{equation}
and where $\gamma\in TL_n$ has $|\gamma | <n$ and $|\vcenter{\hbox{\def\svgwidth{.020\columnwidth} }}_n| = 0$. 
\end{enumerate}
\end{thm} 

 The reader may refer to \cite{Roz10} for the details of constructing  $\vcenter{\hbox{\def\svgwidth{.020\columnwidth} }}_n$ via the chain complexes for torus braids. We shall not use his construction in detail. We need only that the complex for the projector satisfies the universal properties listed in Theorem \ref{thm:uniprop} . There are two mutually dual categorifications from this approach. For this paper we are using the complex whose $i$-grading is bounded from below.

We will also make use of the categorified versions of \eqref{eq:jwrecursive}, \eqref{eq:jwid} proven in \cite{Roz12}, which replaces tangles by their categorification complexes and equality by homotopy equivalence. 

\begin{lem}{\cite[Proposition 3.6]{Roz12}}  \label{lem:jwidc12}

\begin{figure}[H]
\centering
\def\svgwidth{.5\columnwidth}
    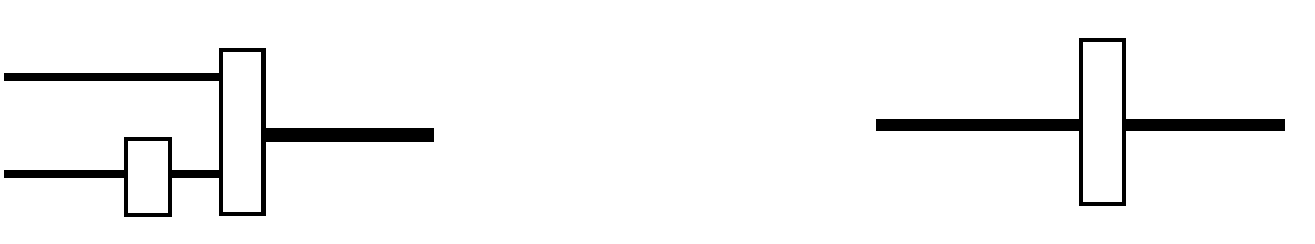
\end{figure} 
\end{lem} 

\begin{lem}{\cite[Theorem 3.8]{Roz12}} \label{lem:jwmcone} The $n+1$ strand categorified Jones-Wenzl projector has the following mapping cone presentation.
\begin{figure}[H]
    \centering
    \def\svgwidth{.8\columnwidth}
    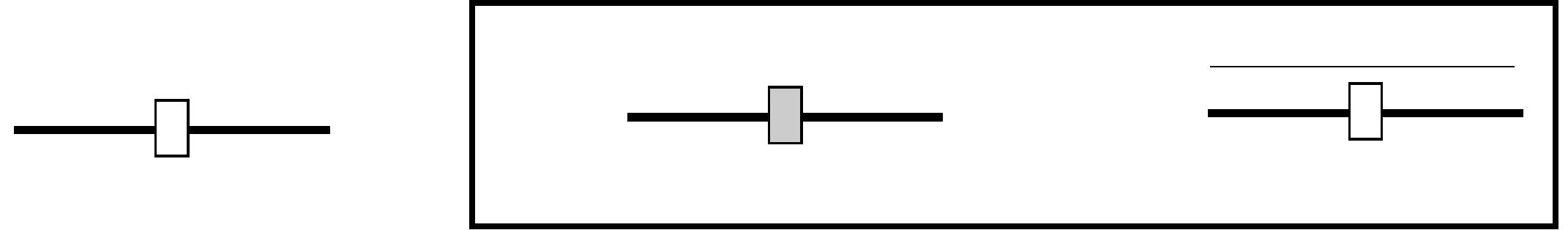
\end{figure}

\end{lem}

\subsection{Link homology, Euler characteristic, and the colored Jones polynomial} \label{subsec:catcjp}

The chain complex in $Kob(\mathcal{TL})$, to which a 1+1 TQFT is applied to obtain the homology groups categorifying the $n$th colored Jones polynomial, is obtained by composing the complex of the tangle $T^n$ from the $n$th cabled link component, with the complex of the $n$th Jones-Wenzl projector $\vcenter{\hbox{\def\svgwidth{.020\columnwidth} }}_n$ as shown in Figure \ref{fig:tcompose} below. 

\begin{figure}[H]
\centering
    \def\svgwidth{.7\columnwidth}
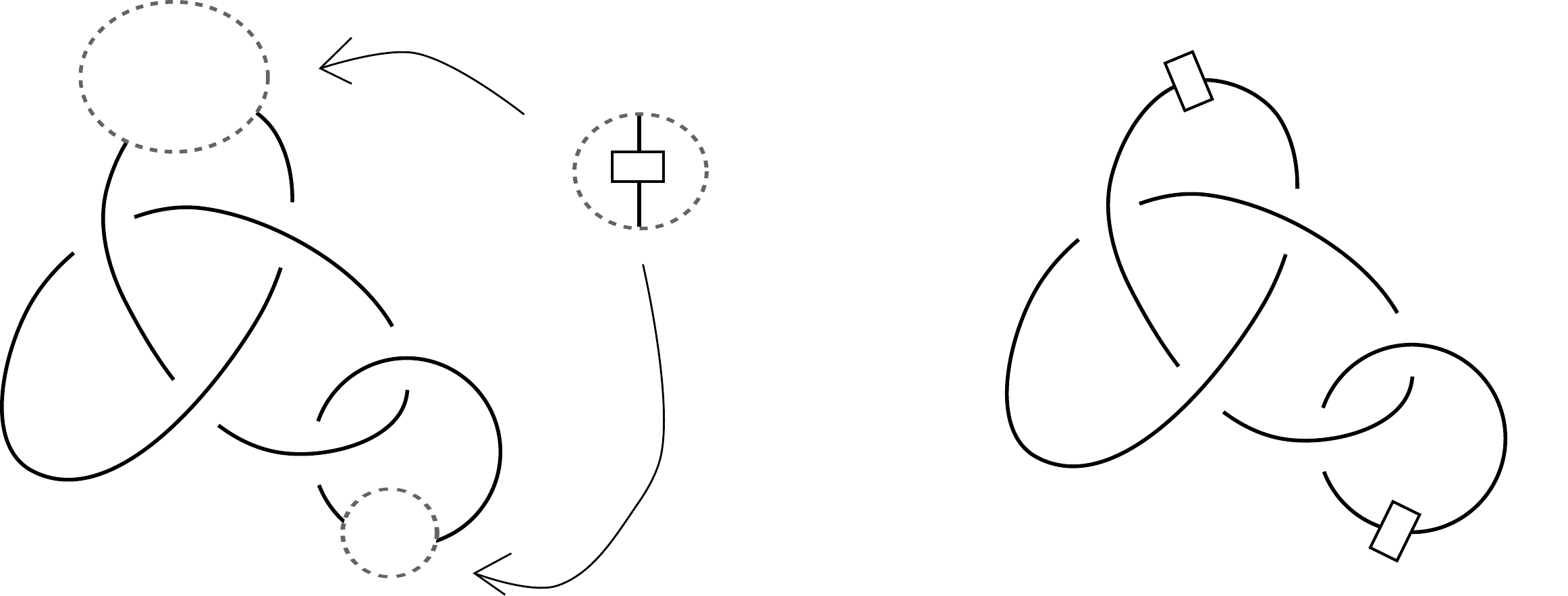
\caption{\label{fig:tcompose}}
\end{figure}  

The homology thus defined is invariant under Reidemeister moves of type II and III shown below but has a degree-shift under the Reidemeister move of type I. 

\begin{figure}[ht]
\def\svgwidth{\columnwidth}
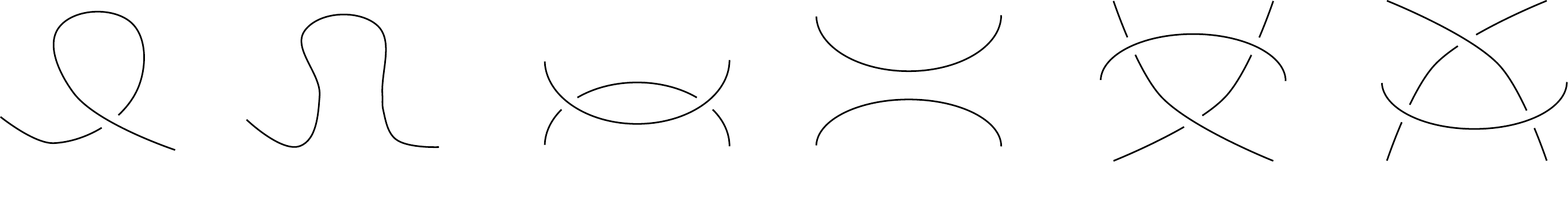
\end{figure}

\begin{equation} 
 \label{eq:twist} 
 \def\svgwidth{.5\columnwidth}
%% Creator: Inkscape inkscape 0.48.4, www.inkscape.org
%% PDF/EPS/PS + LaTeX output extension by Johan Engelen, 2010
%% Accompanies image file '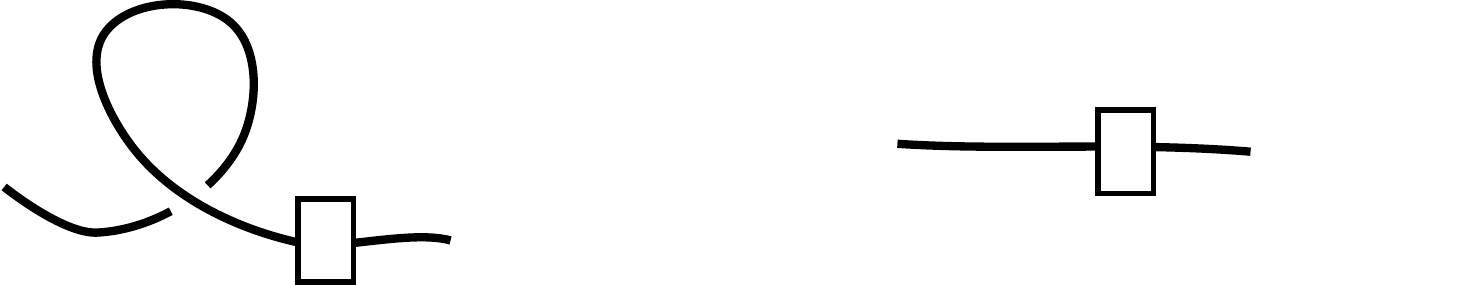' (pdf, eps, ps)
%%
%% To include the image in your LaTeX document, write
%%   \input{<filename>.pdf_tex}
%%  instead of
%%   \includegraphics{<filename>.pdf}
%% To scale the image, write
%%   \def\svgwidth{<desired width>}
%%   \input{<filename>.pdf_tex}
%%  instead of
%%   \includegraphics[width=<desired width>]{<filename>.pdf}
%%
%% Images with a different path to the parent latex file can
%% be accessed with the `import' package (which may need to be
%% installed) using
%%   \usepackage{import}
%% in the preamble, and then including the image with
%%   \import{<path to file>}{<filename>.pdf_tex}
%% Alternatively, one can specify
%%   \graphicspath{{<path to file>/}}
%% 
%% For more information, please see info/svg-inkscape on CTAN:
%%   http://tug.ctan.org/tex-archive/info/svg-inkscape
%%
\begingroup%
  \makeatletter%
  \providecommand\color[2][]{%
    \errmessage{(Inkscape) Color is used for the text in Inkscape, but the package 'color.sty' is not loaded}%
    \renewcommand\color[2][]{}%
  }%
  \providecommand\transparent[1]{%
    \errmessage{(Inkscape) Transparency is used (non-zero) for the text in Inkscape, but the package 'transparent.sty' is not loaded}%
    \renewcommand\transparent[1]{}%
  }%
  \providecommand\rotatebox[2]{#2}%
  \ifx\svgwidth\undefined%
    \setlength{\unitlength}{419.94379883bp}%
    \ifx\svgscale\undefined%
      \relax%
    \else%
      \setlength{\unitlength}{\unitlength * \real{\svgscale}}%
    \fi%
  \else%
    \setlength{\unitlength}{\svgwidth}%
  \fi%
  \global\let\svgwidth\undefined%
  \global\let\svgscale\undefined%
  \makeatother%
  \begin{picture}(1,0.19540721)%
    \put(0,0){\includegraphics[width=\unitlength]{uncurl.pdf}}%
    \put(0.26667477,0.05626132){\color[rgb]{0,0,0}\makebox(0,0)[lb]{\smash{$a$}}}%
    \put(0.33325854,0.10552562){\color[rgb]{0,0,0}\makebox(0,0)[lb]{\smash{$=$}}}%
    \put(0.38519187,0.10622326){\color[rgb]{0,0,0}\makebox(0,0)[lb]{\smash{$\mathbf{h}^{-\frac{1}{2}a^2}\mathbf{q}^{-a}$}}}%
    \put(0.81531965,0.11715007){\color[rgb]{0,0,0}\makebox(0,0)[lb]{\smash{$a$}}}%
  \end{picture}%
\endgroup%

 \end{equation}

We will not use the exact definition of a TQFT that will give the desired homology groups. The reader may consult \cite[Definition 7.1]{Bar05} for the precise definitions of such a TQFT. We denote the resulting complex by $\db{D^n_{\vcenter{\hbox{\def\svgwidth{.020\columnwidth} }}}}$, the underlying bi-graded chain groups by $\{C_{i, j}(D, n)\}$, and the resulting homology by $\{\Kh_{i, j}(D, n)\}$. From Definition \ref{defn:cp} of the colored Jones polynomial it is not hard to see that 
\begin{equation} \label{eqn:cjp} J_K(q; n)= ((-1)^nq^{\frac{n^2+2n}{2}})^{\omega(D)}\sum_{i, j}(-1)^j q^{i+j}\text{ dim } \Kh_{i, j}(D, n).  \end{equation}
Note that the presence of the projector means that we are allowing infinite complexes bounded from below. 

In general, for any skein $\Sk$ in $\Sk(\mathbb{R}^2)$ which may be decorated by the Jones-Wenzl projector, we denote the chain complex under the TQFT by $\db{\Sk}$ with underlying chain groups  $\{\Ckh_{i, j}(\Sk)\}$ and the resulting homology groups $\{\Kh_{i, j}(\Sk)\}$.  In a figure we will often depict the complex $\db{\Sk}$ by just the skein $\Sk$ without the double brackets.

\section{The tail homology of the colored Jones polynomial} \label{sec:tailhomology}

This section contains an exposition of Rozansky's work where the results are taken directly, or summarized, from \cite{Roz12}, with obvious adjustments for the change of variables. The goal is to give a precise definition of the tail homology which he has constructed. 

\subsection{Shifted homology} \label{subsec:shifted}
Let $D$ be a skein in $\Sk(\mathbb{R}^2)$ possibly with crossings and maybe decorated by Jones-Wenzl projectors, Rozansky defines a \emph{shifted homology} in \cite{Roz12} which we write here with a different notation for the degree shifts. 
\begin{defn}
\[\widetilde{\Kh}(D, n) := \textbf{h}^{\frac{1}{2}n^2c(D)} \textbf{q}^{n|s_A(D)|}\Kh(D, n).\]
\end{defn}
This is identical to his definition in \cite{Roz12} given below 
\[\widetilde{\Kh}(D_n) := \textbf{h}^{\frac{1}{2}n_{\times}(D_n)} \textbf{q}^{n_{\circ}(D_n)}\Kh(D_n), \]
where for him $D_n$ is the $n$-blackboard cable of $D$ decorated by the Jones-Wenzl projector. For a skein $D$,  the number $n_{\times}(D)$ which is the total number of single line crossings in $D$ (as he defines in \cite{Roz12}) is the same as the number of crossings in $D$. The quantity $n_{\circ}(D)$, which he defines as the total number of circles resulting from replacing the Jones-Wenzl projectors with identity braids and performing the $B$-splicings on all crossings, is also  the same as the number of state circles in the all-$B$ state of $D$. Adjusting for our convention for the variable changes of $\mathbf{q} \mapsto \mathbf{q}^{-1}$ and $\mathbf{h} \mapsto \mathbf{h}^{-1}$ gives the corresponding quantities concerning the all-$A$ Kauffman state.

\subsection{Tail homology} \label{subsec:tail}

Rozansky's main result in \cite{Roz12} is the following. 
\begin{thm}[{\cite[Theorem 2.13]{Roz12}}] \label{thm:tail-map} For any link diagram $D$ there is a sequence of degree-preserving maps 
\begin{equation} \widetilde{\Kh}(D, n) \stackrel{f_n}{\longrightarrow} \widetilde{\Kh}(D, n+1),  \label{eq:dfuncts} \end{equation}
which are isomorphisms on $\widetilde{\Kh_{i, *}}$ for $i\leq n-1$.  
\end{thm}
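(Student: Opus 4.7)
The plan is to construct the maps $f_n$ from the mapping cone presentation of the Jones-Wenzl projector in Lemma \ref{lem:jwmcone}, and then argue that the ``error'' half of the cone is supported in high homological degree after the $\mathbf{h}^{\frac{1}{2}n^2c(D)}\mathbf{q}^{n|s_A(D)|}$ shift, so that it contributes nothing in the range $i \leq n-1$.

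First I would apply Lemma \ref{lem:jwmcone} inside the $(n+1)$-cable of $D$, once for each link component, to obtain
\[ \jwproj_{n+1} \sim \mathrm{Cone}\bigl(\mathbf{h}\,\mathbf{E}_n \longrightarrow \mathbf{F}_n\bigr), \]
where $\mathbf{F}_n$ contains $\jwproj_n$ together with one extra identity strand (plus a turnback), and $\mathbf{E}_n$ is the residual ``plus-piece''. Composing with the $(n+1)$-cabled tangle and then invoking Lemma \ref{lem:jwidc12} to absorb $\jwproj_n$ into the $n$ interior strands of the cable, the $\mathbf{F}_n$-part simplifies up to homotopy to the $n$-cable chain complex $\db{D^n_{\jwproj}}$, with grading shifts induced by the $(2n+1)c(D)$ extra crossings and the $|s_A(D)|$ extra state circles that appear when passing from the $n$-cable to the $(n+1)$-cable. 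This yields a chain map
\[ f_n\colon \widetilde{\Kh}(D,n) \cong \mathbf{H}(\mathbf{F}_n) \longrightarrow \widetilde{\Kh}(D,n+1), \]
and the bookkeeping $\tfrac{1}{2}((n+1)^2-n^2)c(D)=\tfrac{1}{2}(2n+1)c(D)$ and $((n+1)-n)|s_A(D)|=|s_A(D)|$ shows that the global renormalization in the definition of $\widetilde{\Kh}$ exactly absorbs these shifts, so $f_n$ is degree-preserving.

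The heart of the argument is the lower bound $|\mathbf{E}_n|\geq n$ on the minimum homological degree of the error term after cabling and after the $\tfrac12 n^2 c(D)$ shift. Given this bound, the short exact sequence of the mapping cone
\[ 0 \longrightarrow \mathbf{F}_n \longrightarrow \mathrm{Cone}(\mathbf{h}\mathbf{E}_n \rightarrow \mathbf{F}_n) \longrightarrow \mathbf{h}\mathbf{E}_n \longrightarrow 0 \]
yields a long exact sequence whose $\mathbf{h}\mathbf{E}_n$-contribution vanishes in homological degrees $\leq n-1$, giving immediately
\[ \widetilde{\Kh}_{i,*}(D,n) \xrightarrow{\ \cong\ } \widetilde{\Kh}_{i,*}(D,n+1) \quad\text{for }i\leq n-1. \]

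The main obstacle is establishing the linear-in-$n$ bound $|\mathbf{E}_n|\geq n$. Theorem \ref{thm:uniprop}(iii) by itself only says that $\jwprojp_{n+1}$ lives in homological degrees $\geq 1$, which is far weaker than what is needed. The point is that every summand of $\jwprojp_{n+1}$ uses a tangle $\gamma$ with through-strand count $|\gamma|<n+1$, and when such a $\gamma$ is composed into the cable picture the reduction to a minimal representative requires many saddle cobordisms, each contributing a $\deg_{\mathbf{h}}=-1$ in my conventions but each coming from a summand already shifted by $\mathbf{h}^{\geq 1}$. Carefully tracking how this through-strand defect compounds under cabling (either by induction on $n$ using the recursion of Lemma \ref{lem:jwmcone}, or by a direct estimate from Rozansky's explicit construction of $\jwproj_n$ via torus braids) then converts the bound $|\jwprojp_{n+1}|\geq 1$ into the required $|\mathbf{E}_n|\geq n$ after the global $\mathbf{h}^{\frac12 n^2 c(D)}$ normalization. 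Once this quantitative step is in hand, the theorem follows from the long exact sequence above.
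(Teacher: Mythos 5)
The paper does not actually prove this statement: it is quoted as \cite[Theorem 2.13]{Roz12} and used as a black box, so there is no internal proof to compare against. However, the Section~\ref{sec:conj} machinery the paper quotes from \cite{Roz12} (Lemma~\ref{lem:hoestimate}, Lemma~\ref{lem:psimplify}, Theorem~\ref{thm:hles}, the cone presentations of $\jwproj_{n+1}$, and the local transformations) is the apparatus Rozansky uses to prove it, and your proposal can be assessed against that. Your high-level strategy -- replace $\jwproj_{n+1}$ by a cone whose good half contains $\jwproj_n$ and whose error half lives in high homological degree, and check the shift arithmetic $\tfrac12(2n+1)c(D)$ and $|s_A(D)|$ -- is the right one.

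Two of your steps are wrong or substantially underjustified, and they are precisely the ones you flag as ``the main obstacle.'' First, the mechanism you give for the linear bound $|\mathbf{E}_n|\geq n$ is incorrect: you attribute it to ``saddle cobordisms, each contributing $\deg_{\mathbf{h}}=-1$,'' but saddle cobordisms are the chain \emph{differentials} -- they move within a fixed complex and have no effect on its minimum homological degree. The actual mechanism (visible in the proof of Proposition~\ref{p:lcm2}(b), which mirrors Rozansky's) is a crossing-count argument: a low-through-strand tangle $\gamma$ lets you slide the turned-back strand along a projector via Lemma~\ref{lem:psimplify}, which is an isotopy that strictly \emph{reduces the crossing number}, and then Lemma~\ref{lem:hoestimate} ($|\db{\Sk}|\geq -\tfrac12 c(\Sk)$) turns the lower crossing count into a higher minimum homological degree. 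Neither ``induction on $n$ via the recursion'' nor ``a direct estimate from the torus-braid construction'' (which the paper explicitly declines to use, see the remark after Theorem~\ref{thm:uniprop}) is the right tool. Second, the claim that $\mathbf{F}_n$ simplifies to $\db{D^n_{\jwproj}}$ ``by invoking Lemma~\ref{lem:jwidc12}'' does not hold: that lemma only absorbs a smaller projector into a larger one, and $\mathbf{F}_n$ still carries the $(n+1)$th strand winding through the entire diagram with all $(2n+1)c(D)$ extra crossings intact. Straightening that strand out is not a single lemma application but the iterated local-transformation argument, which is the bulk of the proof and the source of the $i\leq n-1$ cutoff.
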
 

This implies that the directed system formed by the maps $f_n$ has a limit. 

\begin{defn}
The \emph{tail homology} $H^{\infty}(D)$ of a link diagram $D$ is the limit of the direct system $\{\widetilde{H^{Kh}}(D, n), f_n\}$ formed by the maps $f_n$ of Theorem \ref{thm:tail-map}. 

\[H^{\infty}(D) = \lim_{\rightarrow} \widetilde{\Kh}(D, n). \] 
\end{defn} 

\begin{cor} \label{cor:directlimit}
The $i$th homology group of $H^{\infty}(D)$ is isomorphic to $\widetilde{\Kh_{i,*}}(D; i+1)$.
\end{cor}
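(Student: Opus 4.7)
The plan is to deduce the corollary directly from Theorem \ref{thm:tail-map} by chasing what happens at a fixed homological degree $i$ in the directed system $\{\widetilde{\Kh}(D,n), f_n\}$. Since the maps $f_n$ are degree-preserving, restricting to the $i$th homological degree yields a sub-directed system
\[
\widetilde{\Kh_{i,*}}(D,1) \xrightarrow{f_1} \widetilde{\Kh_{i,*}}(D,2) \xrightarrow{f_2} \cdots \xrightarrow{f_{n-1}} \widetilde{\Kh_{i,*}}(D,n) \xrightarrow{f_n} \widetilde{\Kh_{i,*}}(D,n+1) \xrightarrow{f_{n+1}} \cdots
\]
whose direct limit is, by construction, the $i$th homological degree of $H^{\infty}(D)$, since direct limits commute with taking graded pieces.

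The key point is to identify where this sub-system stabilizes. By Theorem \ref{thm:tail-map}, the map $f_n$ is an isomorphism on $\widetilde{\Kh_{i,*}}$ precisely when $i \leq n-1$, equivalently when $n \geq i+1$. Therefore, starting from the term indexed by $n = i+1$, every subsequent transition map $f_{i+1}, f_{i+2}, \ldots$ is an isomorphism on the $i$th homological degree.

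Finally, I invoke the elementary fact from homological algebra that if a directed system of abelian groups $A_1 \to A_2 \to \cdots$ has the property that all structure maps $A_n \to A_{n+1}$ are isomorphisms for $n \geq N$, then the canonical map $A_N \to \varinjlim_n A_n$ is an isomorphism. Applying this with $N = i+1$ gives
\[
H_i^{\infty}(D) \;=\; \varinjlim_{n} \widetilde{\Kh_{i,*}}(D,n) \;\cong\; \widetilde{\Kh_{i,*}}(D,i+1),
\]
as claimed. There is no real obstacle here: the entire content lies in the stabilization statement already furnished by Theorem \ref{thm:tail-map}, and the corollary is simply a reading of that stabilization through the lens of direct limits.
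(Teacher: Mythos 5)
Your argument is correct and is precisely the intended reading of Theorem \ref{thm:tail-map}: the paper states Corollary \ref{cor:directlimit} without a separate proof because it follows immediately from the stabilization of the maps $f_n$ in each fixed homological degree, exactly as you spell out.
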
 

In addition, Rozansky proves the following bounds on shifted homology. Note again that what he means by a 	``uni-colored diagram $D_n$" is the same as $D^n_{\vcenter{\hbox{\def\svgwidth{.020\columnwidth} }}}$ considered in this paper. 
\begin{thm}{\cite[Theorem 2.12]{Roz12}} \label{thm:bounds} The shifted homology has a bound: $\widetilde{\Kh_{i, j}}(D, n) = 0$ if one of the following conditions is satisfied. 
\begin{align*}
&j< -\frac{1}{2}i-\frac{1}{2}c(D)-\frac{3}{2}c(D)^{\ell} \\
&j< -i - c(D)^{\ell}, 
\end{align*}
where $c(D)^{\ell}$ is the number of crossings in $D$ whose corresponding segments in the all-$A$ state are one-edged loops. Moreover, if $D$ is $A$-adequate, then 
\[\text{dim} \ \widetilde{\Kh_{i, -i}} = \begin{cases} &0, \text{ if } i > 0 \\ &1, \text{ if } i = 0\end{cases}. \]
\end{thm}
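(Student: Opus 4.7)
The plan is to expand $\db{D^n_{\jwproj}}$ as an iterated mapping cone and track the bi-degree of every chain group that can appear. First I would resolve each of the $n^2 c(D)$ crossings in the $n$-cable of $D$ via the cone formula of Section \ref{subsec:notkbracket}, obtaining a multi-cone indexed by Kauffman states $\sigma$ of the cabled diagram in which $\sigma$ contributes at unshifted homological degree $\tfrac{1}{2}(\beta(\sigma)-\alpha(\sigma))$, where $\alpha(\sigma)$ and $\beta(\sigma)$ are the numbers of $A$- and $B$-resolutions. After the shift by $\mathbf{h}^{n^2 c(D)/2}$ the all-$A$ cable state sits at $i = 0$ and any other state at $i = \beta(\sigma) \geq 1$. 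Second, I would replace each inserted $\jwproj_n$ by its cone presentation $\jwproj_n = \mathrm{Cone}(\jwprojp_n \to |_n)$ from Theorem \ref{thm:uniprop}, so that each projector contributes either the identity-strand summand $|_n$ at $i = 0$ or a $\jwprojp_n$-summand concentrated in strictly positive $i$-degree.

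I would then establish the sharper bound $j \geq -i - c(D)^{\ell}$ by bounding the minimum $j$-degree of every resulting summand. After the shift by $\mathbf{q}^{n|s_A(D)|}$, the $j$-degree of a summand equals the sum of the $\mathbf{q}$-weights contributed by its state circles together with any $\mathbf{q}$-shifts introduced when bigons are absorbed by projectors. For a crossing of $D$ that is not a one-edged loop in $s_A(D)$, switching an $A$-resolution to a $B$-resolution in the cabled block raises $i$ and the minimum $j$-degree by equal amounts, so such terms lie on or above the slope-$(-1)$ line $j = -i$. For a crossing $\ell$ corresponding to a one-edged loop, the cabled block of $\ell$ contains bigons capped by $\jwproj_n$'s after applying Lemma \ref{lem:jwidc12}; the twist-absorption identity \eqref{eq:twist} with $a = n$ then simplifies each such bigon into a $\mathbf{h}^{-n^2/2}\mathbf{q}^{-n}$-shifted strand, introducing exactly the additive $-c(D)^{\ell}$ correction to the slope-$(-1)$ line.

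For the coarser bound $j \geq -\tfrac{1}{2}i - \tfrac{1}{2}c(D) - \tfrac{3}{2}c(D)^{\ell}$ I would use a weaker state-wise inequality: each additional $B$-resolution at a non-loop crossing raises $i$ by $1$ and the minimum $j$-degree by at least $\tfrac{1}{2}$, because a new state circle contributes $\mathbf{q}^{\pm 1}$ with minimum $-1$, producing the slope $-\tfrac{1}{2}$ together with the additive $-\tfrac{1}{2}c(D)$; iterating the twist-absorption across the nested bigons of each cabled one-edged-loop crossing yields the additional $-\tfrac{3}{2}c(D)^{\ell}$ correction. For the $A$-adequate statement, the absence of one-edged loops means the all-$A$ cable state is paired uniquely with the identity summand of every projector to give a single $\mathbf{Z}$-summand at bi-degree $(0,0)$; since any other summand sits at $i \geq 1$ with $j > -i$ by the analysis above, one obtains $\dim \widetilde{\Kh_{i,-i}}(D,n) = 1$ for $i = 0$ and $0$ otherwise.

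The main obstacle is the precise bookkeeping of the nested-bigon simplifications at each one-edged-loop cabled crossing: one must verify that the iterated twist-absorption contributes exactly $\mathbf{h}^{-n^2/2}\mathbf{q}^{-n}$ (and no lower $j$-degree) per loop, with the $-\tfrac{3}{2}$ coefficient in the first bound reflecting the fact that each loop consumes one extra half-unit of $j$-slack beyond what the slope-$(-\tfrac{1}{2})$ accounting already allows. This is handled by an induction on $n$ using the recursion of Lemma \ref{lem:jwmcone}, together with the orthogonality $\jwproj_n \cdot e^i_n = 0$ of Theorem \ref{thm:uniprop} to rule out spurious low-$j$-degree cross-terms between the $\jwprojp_n$- and $|_n$-summands appearing in the projector cone decomposition.
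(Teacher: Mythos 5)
You should first be aware that the paper contains no proof of this statement: it is imported, up to a change of conventions, from Rozansky \cite{Roz12} (his Theorem 2.12), and is used as a black box. So your proposal cannot be measured against an internal argument; it has to stand on its own as a reconstruction of Rozansky's proof, and as such it has genuine gaps at exactly the quantitative points where the theorem has content.

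Three concrete problems. First, your justification of the slope $-\tfrac{1}{2}$ bound fails as stated: a $B$-switch that creates a new state circle \emph{lowers} the minimal $j$-degree by $1$ (the new circle contributes $\mathbf{q}^{-1}$ at the bottom, exactly as you note), so it cannot ``raise the minimum $j$-degree by at least $\tfrac{1}{2}$,'' and no circle-by-circle count produces a half-integer slope; this bound needs a different mechanism. Second, the corrections $-c(D)^{\ell}$ and $-\tfrac{3}{2}c(D)^{\ell}$ are asserted rather than derived. The mechanism you invoke --- absorbing the cabled one-edged-loop crossing via \eqref{eq:twist} with $a=n$ --- produces a shift $\mathbf{h}^{-\frac{1}{2}n^2}\mathbf{q}^{-n}$ that depends on $n$, while the corrections in the statement do not; showing that this dependence cancels exactly against the normalization $\mathbf{h}^{\frac{1}{2}n^2c(D)}\mathbf{q}^{n|s_A(D)|}$ and leaves precisely $-1$, respectively $-\tfrac{3}{2}$, per loop crossing is the actual content of the theorem, and your sketch relegates it to unspecified ``bookkeeping.'' Worse, the twist formula is only applicable when the cabled loop crossing is literally a kink flanked by projectors; a one-edged-loop crossing in general joins two strands of the same state circle that are far apart in the diagram, which is exactly why the paper's own main argument has to pass through the skein $\Sk^k$ and local transformations rather than a local twist absorption. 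Third, the dangerous contributions to low $j$-degree are not the cube of resolutions of the cable (for identity insertions a $B$-switch changes the circle count by at most one, which already keeps those summands on or above the line $j=-i$), but the turnback terms $\gamma$ with $|\gamma|<n$ of the categorified projector: inserting them can create many new circles, and nothing in your sketch bounds this circle creation against the $\mathbf{h}$- and $\mathbf{q}$-shifts recorded in Theorem \ref{thm:uniprop}(iii); the orthogonality property (i) of Theorem \ref{thm:uniprop} concerns contractibility of compositions and says nothing about the degrees of the surviving generators. The same gap undermines the ``moreover'' clause: to get $\dim\widetilde{\Kh_{i,-i}}(D,n)=0$ for $i>0$ you must exclude diagonal contributions from these projector terms, which requires Rozansky's through-degree estimates (or an equivalent), not the chain-level count of the all-$A$ cube alone.
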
 

The graded Euler characteristic of the tail homology is 
\[J_{D, \infty}(q) = \sum_{i, j} (-1)^j q^{i+j} \ \text{dim} \ H^{\infty}_{i, j} (D).  \] 

Because of these homological bounds, $J_{D, \infty}(q)$ is well-defined.

Suppose $K$ is $A$-adequate. Then $H^{\infty}(D)$ is independent of the $A$-adequate diagram chosen, and so is $J_{D, \infty}(q)$. Let $c(K)$ be the minimal crossing number among $A$-adequate diagrams representing $K$, then the inequalities of Theorem \ref{thm:bounds} translate to 
\begin{align*}
&j< -\frac{1}{2}i-\frac{1}{2}c(K) \\
&j< -i.
\end{align*}
In particular, let $D$ be an $A$-adequate diagram of $K$. The homology group $\widetilde{\Kh_{i, j}}(D, n) = 0$ if 
\[i+j< \frac{1}{2}i-\frac{1}{2}c(K). \] 

In view of \eqref{eqn:cjp}, let $n >> c(K)$. If $i \geq n$, then $\widetilde{\Kh_{i, j}}(D, n) = 0$ for $D$ if $i+j < \frac{1}{2}n-\frac{1}{2}c(K)$. Therefore, the terms of the graded Euler characteristic: 
\[ \sum_{i, j}(-1)^j q^{i+j}\text{ dim } \widetilde{\Kh_{i, j}}(D, n), \] where $i+j < \frac{1}{2}n-\frac{1}{2}c(K)$ are determined by $J_{D, \infty}(q)$ up to multiplication by a power of $q$. This is the following corollary from \cite{Roz12}, which we adjust for unframed links by multiplying by a power of $q$. 

\begin{thm}{\cite[Theorem 2.6]{Roz12}} \label{thm:adequatelower}
The graded Euler characteristic of the tail homology determines the lower powers of $q$ in the colored Jones polynomial of a $A$-adequate link: 
\[J_K(q; n)=\left((-1)^nq^{\frac{n^2+2n}{2}}\right)^{\omega(D)} q^{-\frac{1}{2}n^2c(D)-n|s_A(D)|}\left(J_{D, \infty}(q) + O(q^{\frac{1}{2}n-\frac{1}{2}c(K)}) \right).  \] 
\end{thm}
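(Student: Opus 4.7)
The strategy is to unwind the definition of the shifted homology $\widetilde{\Kh}(D,n)$, identify its graded Euler characteristic with a shift of the Euler characteristic of $\Kh(D,n)$, and then compare it against $J_{D,\infty}(q)$ by splitting along homological degree and using the bounds of Theorem \ref{thm:bounds} together with the stabilization of the directed system.

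First, starting from the Euler characteristic formula \eqref{eqn:cjp}, I use the definition $\widetilde{\Kh_{i,j}}(D,n) = \Kh_{i - \frac{1}{2}n^2 c(D),\ j - n|s_A(D)|}(D,n)$ to reindex the sum. Setting $i' = i + \frac{1}{2}n^2c(D)$ and $j' = j + n|s_A(D)|$, the monomial $(-1)^j q^{i+j}$ becomes $(-1)^{n|s_A(D)|}(-1)^{j'} q^{i'+j'} q^{-\frac{1}{2}n^2c(D) - n|s_A(D)|}$. Pulling out the global $q$-factor and absorbing the sign $(-1)^{n|s_A(D)|}$ into the writhe-dependent normalization, this yields
\[ J_K(q;n) = \left((-1)^n q^{\frac{n^2+2n}{2}}\right)^{\omega(D)} q^{-\frac{1}{2}n^2c(D) - n|s_A(D)|}\, \widetilde{J}_D(q;n), \]
where $\widetilde{J}_D(q;n) := \sum_{i',j'}(-1)^{j'} q^{i'+j'} \dim \widetilde{\Kh_{i',j'}}(D,n)$ is the graded Euler characteristic of $\widetilde{\Kh}(D,n)$.

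Next I split $\widetilde{J}_D(q;n)$ along homological degree into the parts with $i' \leq n-1$ and with $i' \geq n$. For $i' \leq n-1$, Corollary \ref{cor:directlimit} gives the isomorphism $\widetilde{\Kh_{i',*}}(D,n) \cong H^{\infty}_{i',*}(D)$ of bi-graded groups (the directed system has already stabilized), so this truncated sum is precisely the $i' \leq n-1$ truncation of $J_{D,\infty}(q)$. For $i' \geq n$, Theorem \ref{thm:bounds} applied to the $A$-adequate diagram $D$ (so $c(D)^\ell = 0$) forces $\widetilde{\Kh_{i',j'}}(D,n) = 0$ unless $j' \geq -\tfrac{1}{2}i' - \tfrac{1}{2}c(D)$, equivalently $i' + j' \geq \tfrac{1}{2}i' - \tfrac{1}{2}c(D) \geq \tfrac{1}{2}n - \tfrac{1}{2}c(D)$. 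Since the same bound also applies to $H^{\infty}_{i',j'}(D)$ for $i' \geq n$ (via the direct limit), the tail of $J_{D,\infty}(q)$ beyond $i' \leq n-1$ is also $O(q^{\tfrac{1}{2}n - \tfrac{1}{2}c(D)})$. Combining these, $\widetilde{J}_D(q;n) = J_{D,\infty}(q) + O(q^{\tfrac{1}{2}n - \tfrac{1}{2}c(D)})$.

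Finally, since $H^{\infty}(D)$ and hence $J_{D,\infty}(q)$ are independent of the $A$-adequate diagram chosen, I select $D$ realizing $c(K)$, the minimum crossing number among $A$-adequate diagrams of $K$, so that $c(D)$ can be replaced by $c(K)$ in the error term, giving the stated formula. The main technical delicacy I expect is the sign-and-parity bookkeeping: the $(-1)^{n|s_A(D)|}$ produced by the $\mathbf{q}$-shift, and the possibly half-integer $\mathbf{h}$-grading produced when $n^2 c(D)$ is odd, must be verified to match the stated unframed normalization; this is routine but must be handled carefully so that the stated equality holds on the nose rather than merely up to a power of $q$ or a sign.
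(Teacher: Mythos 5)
Your proposal follows essentially the same route as the paper: its derivation (the discussion preceding the statement) likewise reindexes the Euler characteristic \eqref{eqn:cjp} by the $\mathbf{h}^{\frac{1}{2}n^2c(D)}\mathbf{q}^{n|s_A(D)|}$ shift, uses Corollary \ref{cor:directlimit} to identify $\widetilde{\Kh_{i,*}}(D,n)$ with $H^{\infty}_{i,*}(D)$ for $i\leq n-1$, applies the vanishing bounds of Theorem \ref{thm:bounds} (with $c(D)^{\ell}=0$) to the degrees $i\geq n$, and invokes diagram-independence of $H^{\infty}$ for $A$-adequate diagrams to state the error term in $c(K)$. The sign-and-framing bookkeeping you flag at the end is precisely what the paper leaves to the citation of \cite[Theorem 2.6]{Roz12}, adjusting only ``for unframed links by multiplying by a power of $q$.''
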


It is in this sense that the tail homology categorifies the tail for the colored Jones polynomial when the knot is $A$-adequate.
\section{Proof of Conjecture \ref{thm:main}} \label{sec:conj}

A substantial portion of our proof relies on interpreting and adapting the content of Rozansky's arguments \cite{Roz12}, which we will indicate and give original references whenever appropriate. 

We consider the following combinatorial data of a skein $\Sk\in TL$ with crossings which may or may not be decorated by Jones-Wenzl projectors. We also consider a more general Kauffman state $\sigma$, which is a choice of $A$- or $B$-resolution at a crossing as before, but restricted to a specified subset of the crossings of $\Sk$. It should be assumed that a Kauffman state is applied to all the crossings of $\Sk$ unless otherwise indicated. 

\begin{itemize}
\item $C(\Sk):=$ The set of crossings in $\Sk$, and let $c(\Sk) = |C(\Sk)|$ be the number of crossings in $\Sk$.
\item $\sgn(\sigma) := \sgn_B(\sigma) - \sgn_A(\sigma),$ where
\begin{align*} 
\sgn_A(\sigma) &:= \frac{1}{2} (\# \text{ of crossings where the $A$-resolution is chosen in $\sigma$)}\text{, and}  \\ 
\sgn_B(\sigma) &:= \frac{1}{2} (\# \text{ of crossings where the $B$-resolution is chosen in $\sigma$}).\end{align*}
\end{itemize} 
\end{defn} 

\subsection{Tools for working with the chain complex}
We will use the following lemmas. Recall from Definition \ref{defn:mhd} that $|\mathbf{T}|$ is the minimum homological degree of the complex $(\mathbf{T}, d_{\mathbf{T}})$. 

\begin{lem}{\cite[Theorem 2.10]{Roz12}} \label{lem:hoestimate}
Let $\Sk$ be a skein with crossings which may include disjoint circles and may be decorated by Jones-Wenzl projectors. Let $c(\Sk)$ be the total number of crossings of $\Sk$. The minimum homological degree of the complex $\db{\Sk}$ is bounded below by $-\frac{1}{2}c(\Sk)$. i.e., 
\[ |\db{\Sk}| \geq -\frac{1}{2}c(\Sk).\] 
\end{lem}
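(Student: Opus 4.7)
The plan is to induct on the total number of crossings $c(\Sk)$. The bound captures the fact that each Khovanov cone at a crossing can lower the minimum homological degree by at most $\tfrac{1}{2}$, while Jones--Wenzl projectors and crossingless skeins all sit in non-negative homological degree.

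For the base case $c(\Sk) = 0$, the skein $\Sk$ is a crossingless diagram, possibly together with disjoint circles and Jones--Wenzl projectors. Crossingless Temperley--Lieb tangles and disjoint circles are supported in homological degree $0$, and for each projector $\jwproj_n$ appearing in $\Sk$, Theorem \ref{thm:uniprop}(iii) presents it as $\mathrm{Cone}(\jwprojp_n \to |_n)$ with $\jwprojp_n$ concentrated in $\mathbf{h}$-degrees $\ge 0$ and $|_n$ placed in $\mathbf{h}$-degree $0$, so $|\jwproj_n| = 0$. Since the composition of bounded-below tangle complexes in $Kob(\mathcal{TL})$ is taken degreewise, the minimum homological degrees of the factors add, yielding $|\db{\Sk}| \geq 0 = -\tfrac{1}{2}c(\Sk)$.

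For the inductive step, I would pick any crossing $x$ of $\Sk$ and apply the cone definition of the Khovanov bracket at $x$:
\[ \db{\Sk} \;=\; \mathrm{Cone}\!\bigl(\mathbf{h}^{-1/2}\db{\Sk_B} \stackrel{s}{\longrightarrow} \mathbf{h}^{-1/2}\db{\Sk_A}\bigr), \]
where $\Sk_A$ and $\Sk_B$ denote the $A$- and $B$-resolutions at $x$, each having $c(\Sk)-1$ crossings. As a graded module, this cone equals $\mathbf{h}^{1/2}\db{\Sk_B} \oplus \mathbf{h}^{-1/2}\db{\Sk_A}$, hence
\[ |\db{\Sk}| \;\geq\; \min\!\bigl(\tfrac{1}{2}+|\db{\Sk_B}|,\; -\tfrac{1}{2}+|\db{\Sk_A}|\bigr). \]
The inductive hypothesis gives $|\db{\Sk_A}|,\,|\db{\Sk_B}| \geq -\tfrac{1}{2}(c(\Sk)-1)$, and the right-hand side evaluates to $-\tfrac{1}{2}c(\Sk)$, closing the induction.

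The main point requiring care is the base case: one must verify that inserting Jones--Wenzl projectors (which are infinite complexes) cannot push the composite below homological degree $0$. This is handled cleanly by the explicit mapping-cone presentation of $\jwproj_n$ in Theorem \ref{thm:uniprop}(iii), whose $\mathbf{h}$-degrees are already bounded below by $0$; no further simplification via Lemma \ref{lem:jwidc12} is required, and the additivity of minimum homological degree under composition of bounded-below complexes finishes the base case.
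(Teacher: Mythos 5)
The paper does not prove Lemma~\ref{lem:hoestimate}: it is imported verbatim from \cite[Theorem~2.10]{Roz12}, so there is no in-paper argument to compare against. Your proposed induction is, however, a correct self-contained proof, and it is the natural one. The inductive step is sound: the cone presentation of the Khovanov bracket at a crossing, $\db{\Sk} = \mathrm{Cone}\bigl(\mathbf{h}^{-1/2}\db{\Sk_B} \to \mathbf{h}^{-1/2}\db{\Sk_A}\bigr)$, has underlying graded object $\mathbf{h}^{1/2}\db{\Sk_B}\oplus \mathbf{h}^{-1/2}\db{\Sk_A}$, so $|\db{\Sk}| \geq \min\bigl(\tfrac12 + |\db{\Sk_B}|,\ -\tfrac12 + |\db{\Sk_A}|\bigr)$, and with the inductive hypothesis this evaluates to $-\tfrac12 c(\Sk)$. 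The base case also checks out: crossingless Temperley--Lieb diagrams and circles sit at $\mathbf{h}$-degree $0$, Theorem~\ref{thm:uniprop}(iii) explicitly records $|\jwproj_n|=0$, and planar (degreewise) composition of bounded-below complexes makes minimum $\mathbf{h}$-degrees additive, so the composite is bounded below by $0$. One minor imprecision: you describe $\jwprojp_n$ as ``concentrated in $\mathbf{h}$-degrees $\ge 0$,'' but what you actually need and what the theorem gives is the equality $|\jwproj_n|=0$ for the full cone; invoking that directly is cleaner and avoids having to track where $\jwprojp_n$ itself starts. With that small adjustment the argument is complete.
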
 

\begin{thm}{\cite[Theorem 3.3]{Roz12}} \label{lem:psimplify} Let $\beta$ be a braid shown below in Figure \ref{fig:straightbraid}, with the braid strands going from left to right. Let a crossing of $\beta$ be positive or negative depending on whether it is $\vcenter{\hbox{\includegraphics[scale=.2]{crossing1.png}}}$ or $\vcenter{\hbox{\includegraphics[scale=.2]{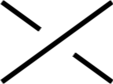}}}$, respectively. We have the following homotopy equivalence. 
\begin{figure}[ht]
\centering
    \def\svgwidth{.7\columnwidth}
    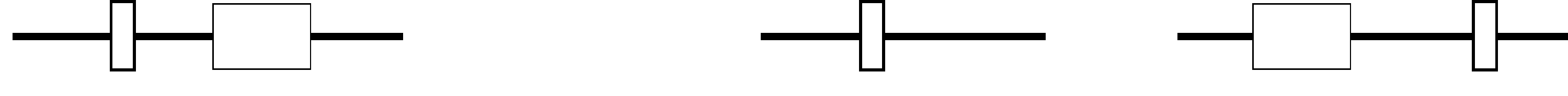,
    \caption{\label{fig:straightbraid}}
    \end{figure}
    
where $n_+$(resp. $n_-$) is the number of positive (resp.) negative crossings of the braid.
\end{thm}

\begin{lem} \label{lem:coneswitch} If a chain complex $\mathbf{T}$ has the following mapping cone presentation
\[\mathbf{T} \sim Cone(\mathbf{A} \stackrel{f}{\rightarrow} Cone(\mathbf{B}\stackrel{g}{\rightarrow} \mathbf{C})). \] Then it also admits the mapping cone presentation 
\[\mathbf{T}\sim Cone(Cone(\mathbf{A}[-1]\stackrel{f_{\mathbf{B}}}{\rightarrow} \mathbf{B}) \stackrel{f_{\mathbf{C}}\oplus g}{\rightarrow} \mathbf{C}), \]
where $f_{\mathbf{B}}$ is the component of $f$ in $\mathbf{B}$ and $f_{\mathbf{C}}$ is the component in $\mathbf{C}$. 
\end{lem}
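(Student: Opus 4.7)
The plan is to unwind both mapping cone expressions into total complexes on a common underlying bigraded module and exhibit an isomorphism of chain complexes between them (up to an obvious sign adjustment).

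First I would verify that the underlying bigraded modules agree. On the left, $\text{Cone}(\mathbf{B}\stackrel{g}{\rightarrow}\mathbf{C})=\mathbf{h}\mathbf{B}\oplus \mathbf{C}$, so $\text{Cone}(\mathbf{A}\stackrel{f}{\rightarrow}\text{Cone}(g)) = \mathbf{h}\mathbf{A}\oplus\mathbf{h}\mathbf{B}\oplus\mathbf{C}$. On the right, using $\mathbf{h}\mathbf{A}[-1]=\mathbf{A}$, the inner cone $\text{Cone}(\mathbf{A}[-1]\stackrel{f_{\mathbf{B}}}{\rightarrow}\mathbf{B})$ has underlying module $\mathbf{A}\oplus\mathbf{B}$, and the outer cone with $\mathbf{C}$ yields $\mathbf{h}(\mathbf{A}\oplus\mathbf{B})\oplus\mathbf{C} = \mathbf{h}\mathbf{A}\oplus\mathbf{h}\mathbf{B}\oplus\mathbf{C}$. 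Decomposing $f=(f_{\mathbf{B}},f_{\mathbf{C}})$ with $f_{\mathbf{B}}:\mathbf{A}\to\mathbf{h}\mathbf{B}$ and $f_{\mathbf{C}}:\mathbf{A}\to\mathbf{C}$, the LHS differential takes the lower-triangular form
\[ d_{\mathrm{LHS}}=\begin{pmatrix} -d_{\mathbf{A}} & 0 & 0 \\ f_{\mathbf{B}} & -d_{\mathbf{B}} & 0 \\ f_{\mathbf{C}} & g & d_{\mathbf{C}}\end{pmatrix}. \]
A parallel unwrapping of the RHS, threaded through the two $\mathbf{h}$-shifts and the $[-1]$-shift, produces the same diagonal entries $(-d_{\mathbf{A}}, -d_{\mathbf{B}}, d_{\mathbf{C}})$ and the same off-diagonal morphisms $f_{\mathbf{B}}, g, f_{\mathbf{C}}$, up to signs absorbed by a suitable diagonal sign-change automorphism of $\mathbf{h}\mathbf{A}\oplus\mathbf{h}\mathbf{B}\oplus\mathbf{C}$.

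Next I would verify that the RHS is a well-defined chain complex by checking the required chain-map identities: $f_{\mathbf{B}}\circ d_{\mathbf{A}}+d_{\mathbf{B}}\circ f_{\mathbf{B}}=0$ (so that $f_{\mathbf{B}}:\mathbf{A}[-1]\to\mathbf{B}$ is a chain map), $g\circ d_{\mathbf{B}}=d_{\mathbf{C}}\circ g$, and $f_{\mathbf{C}}\circ d_{\mathbf{A}}-d_{\mathbf{C}}\circ f_{\mathbf{C}}=g\circ f_{\mathbf{B}}$ (so that $(f_{\mathbf{C}}, g):\text{Cone}(\mathbf{A}[-1]\to\mathbf{B})\to\mathbf{C}$ is a chain map). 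Each of these falls out of a block-component of $d_{\mathrm{LHS}}^2=0$, which holds by the hypothesis that $\mathbf{T}$ equals the given double cone.

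The main obstacle is purely bookkeeping: tracking the sign conventions introduced by the two $\mathbf{h}$-shifts and the $[-1]$-shift consistently through both presentations. Once the signs are handled, the identification gives not merely a homotopy equivalence but an actual isomorphism of chain complexes, and no additional machinery beyond the definition of the mapping cone is required.
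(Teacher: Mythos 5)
Your proof is correct and follows essentially the same route as the paper's, which dispatches this lemma in a single sentence (``immediate from writing out the chain complexes of the two presentations as direct sums and comparing the differentials''); your proposal simply carries out that computation explicitly. One small caveat about the sign bookkeeping: with the paper's conventions, after unwinding both sides into lower-triangular $3\times 3$ differentials on $\mathbf{h}\mathbf{A}\oplus\mathbf{h}\mathbf{B}\oplus\mathbf{C}$, the two matrices differ on exactly one off-diagonal entry (namely $f_{\mathbf{B}}$ versus $-f_{\mathbf{B}}$), and a (degree-constant) diagonal sign-change automorphism $\mathrm{diag}(\epsilon_1,\epsilon_2,\epsilon_3)$ always flips an even number of strictly lower-triangular entries, since $(\epsilon_1\epsilon_2)(\epsilon_1\epsilon_3)(\epsilon_2\epsilon_3)=1$; so it cannot absorb a single stray sign. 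The fix is easy and standard: $\mathrm{Cone}(\varphi)$ and $\mathrm{Cone}(-\varphi)$ are isomorphic via the sign change on the shifted summand, so one may freely replace $f_{\mathbf{B}}$ by $-f_{\mathbf{B}}$ in the right-hand presentation, after which your matrix identification is an honest isomorphism. Since the lemma only asserts a homotopy equivalence and its downstream applications use only the cone structure (not the precise signs of the maps), this subtlety is harmless, but the phrase ``diagonal sign-change automorphism'' should be adjusted accordingly.
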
 

\begin{proof}
This is immediate from writing out the chain complexes of the two presentations as direct sums and comparing the differentials. 
\end{proof}

\subsection{Decomposition over Kauffman states} 

Let $D$ be a non-$A$ adequate link diagram, which means that its all-$A$ state has a loop $\ell$. Let $\ell^{n+1}$ denote the $n+1$-cabled crossings corresponding to $\ell$ in $D^{n+1}$. Without loss of generality, we assume that $\ell$ lies in the bounded component of a state circle to which the corresponding one-edged loop is attached. Let $C(D)$ be the set of crossings of a diagram $D$, we consider Kauffman states on $D^{n+1}$ restricted to the set of crossings $C(D^{n+1}) \setminus \ell^{n+1}$. 

We have the skein exact sequence on homology groups from choosing the $A$- or $B$-resolution at each crossing via the map
\[ \db{\vcenter{\hbox{\includegraphics[scale=.2]{crossing1.png}}}} = \fbox{$\mathbf{h}^{\frac{1}{2}}\db{\vcenter{\hbox{\includegraphics[scale=.2]{crossing2.png}}}} \ \stackrel{s}{\rightarrow} \textbf{h}^{-\frac{1}{2}} \ \db{\vcenter{\hbox{\includegraphics[scale=.2]{crossing3.png}}}}$}.\] Pick an  order on the crossings in $C(D^{n+1}) \setminus \ell^{n+1}$, we paste together the exact sequence for every crossing to obtain the presentation of $\db{D^{n+1}_{\vcenter{\hbox{\def\svgwidth{.020\columnwidth} }}}}$ as an iterated mapping cone. Let $\Sk_{\sigma}$ be a skein with crossings and decorated by Jones-Wenzl projectors resulting from a state $\sigma$. Then the chain groups of $\db{D^{n+1}_{\vcenter{\hbox{\def\svgwidth{.020\columnwidth} }}}}$ is a direct sum of the chain groups of $\Sk$.
\[\db{D^{n+1}_{\vcenter{\hbox{\def\svgwidth{.020\columnwidth} }}}} = \bigoplus_{\sigma \text{ on } C(D^{n+1}) \setminus \ell^{n+1}} \db{\Sk_{\sigma}}. \] 

\begin{lem} \label{lem:decompstate} The unshifted homology group $\Kh_{i, *}(D, n+1) = 0$ if $\Kh_{i-sgn(\sigma), *} (\Sk_{\sigma}) = 0$ for all $\sigma$.    
\end{lem}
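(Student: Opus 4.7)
The plan is to exploit the iterated mapping cone presentation of $\db{D^{n+1}_{\jwproj}}$ obtained by resolving each crossing in $C(D^{n+1}) \setminus \ell^{n+1}$ one at a time, and then peel this cone apart via the long exact sequences of mapping cones. The basic local rule we will use is
\[
\vcenter{\hbox{\includegraphics[scale=.2]{crossing1.png}}}
\;=\; \text{Cone}\bigl(\mathbf{h}^{-\frac{1}{2}}\,\vcenter{\hbox{\includegraphics[scale=.2]{crossing2.png}}} \xrightarrow{\,s\,} \mathbf{h}^{-\frac{1}{2}}\,\vcenter{\hbox{\includegraphics[scale=.2]{crossing3.png}}}\bigr),
\]
which places the $B$-resolution at homological grading $+\tfrac{1}{2}$ and the $A$-resolution at $-\tfrac{1}{2}$ relative to the crossing. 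Consequently, a state $\sigma$ with $a$ $A$-resolutions and $b$ $B$-resolutions contributes $\db{\Sk_\sigma}$ in overall homological degrees shifted by $-\tfrac{a}{2}+\tfrac{b}{2}=\sgn(\sigma)$, so that the desired hypothesis $\Kh_{i-\sgn(\sigma),\ast}(\Sk_\sigma)=0$ is precisely the local contribution in homological degree $i$.

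I would then proceed by induction on $N=|C(D^{n+1})\setminus \ell^{n+1}|$. The base case $N=0$ is trivial, since the unique state is the empty one with $\sgn=0$ and $\Sk_\emptyset=D^{n+1}_{\jwproj}$. For the inductive step, pick a single crossing $c$ and write
\[
\db{D^{n+1}_{\jwproj}} \;\sim\; \text{Cone}\bigl(\mathbf{h}^{-\frac{1}{2}}\db{D_B} \xrightarrow{\,s\,} \mathbf{h}^{-\frac{1}{2}}\db{D_A}\bigr),
\]
where $D_A, D_B$ denote the skeins obtained by resolving $c$. The associated short exact sequence of chain complexes
\[
0 \longrightarrow \mathbf{h}^{-\frac{1}{2}}\db{D_A} \longrightarrow \db{D^{n+1}_{\jwproj}} \longrightarrow \mathbf{h}^{\frac{1}{2}}\db{D_B} \longrightarrow 0
\]
yields a long exact sequence on homology in each quantum grading.

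Every state $\sigma$ on $D^{n+1}_{\jwproj}$ decomposes uniquely as a pair (choice at $c$, state $\sigma'$ on the remaining $N-1$ crossings), where the $A$-choice gives $\sgn(\sigma)=\sgn(\sigma')-\tfrac{1}{2}$ with $\Sk_\sigma$ a state skein of $D_A$, and the $B$-choice gives $\sgn(\sigma)=\sgn(\sigma')+\tfrac{1}{2}$ with $\Sk_\sigma$ a state skein of $D_B$. Thus the hypothesis $\Kh_{i-\sgn(\sigma),\ast}(\Sk_\sigma)=0$ translates to $\Kh_{(i+\frac{1}{2})-\sgn(\sigma'),\ast}(\Sk_{\sigma'})=0$ for states $\sigma'$ of $D_A$, and to $\Kh_{(i-\frac{1}{2})-\sgn(\sigma'),\ast}(\Sk_{\sigma'})=0$ for states $\sigma'$ of $D_B$. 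Applying the inductive hypothesis to $D_A$ and $D_B$ gives $\Kh_{i+\frac{1}{2},\ast}(D_A)=0$ and $\Kh_{i-\frac{1}{2},\ast}(D_B)=0$, whence the relevant portion of the long exact sequence forces $\Kh_{i,\ast}(D,n+1)=0$, completing the induction.

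The argument is structurally straightforward — essentially an iterated long-exact-sequence chase — so the main obstacle is purely bookkeeping: one must track the half-integer $\mathbf{h}$-shifts coming from the cone convention and verify that they line up with the author's definition $\sgn(\sigma)=\sgn_B(\sigma)-\sgn_A(\sigma)$. A conceptually equivalent alternative, which I would mention only if the direct induction becomes notationally cumbersome, is to filter $\db{D^{n+1}_{\jwproj}}$ by the number of $B$-resolutions used among $C(D^{n+1})\setminus\ell^{n+1}$; the $E_1$-page of the resulting spectral sequence is $\bigoplus_\sigma \mathbf{h}^{\sgn(\sigma)}\Kh(\Sk_\sigma)$, so vanishing of every summand in the prescribed bigrading immediately forces $\Kh_{i,\ast}(D,n+1)=0$.
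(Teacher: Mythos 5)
Your proof is correct and follows essentially the same route as the paper: iteratively apply the long exact sequence coming from the mapping cone at each crossing of $C(D^{n+1})\setminus\ell^{n+1}$, tracking the $\pm\frac{1}{2}$ homological shift at each resolution choice so that the total shift for a state $\sigma$ is $\sgn(\sigma)$. The paper describes this iteration informally (pasting together exact sequences crossing by crossing and reading off the vanishing), whereas you package the same argument as an induction on the number of remaining crossings and also note the equivalent cube-filtration spectral sequence; both of these are natural formalizations of the paper's sketch rather than different arguments.
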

\begin{proof}
We have the long exact sequence 
\[\cdots \rightarrow \Kh_{i-\frac{1}{2}, j}(\vcenter{\hbox{\includegraphics[scale=.2]{crossing2.png}}}) \rightarrow  \Kh_{i, j}(\vcenter{\hbox{\includegraphics[scale=.2]{crossing1.png}}}) \rightarrow  \Kh_{i+\frac{1}{2}, j}(\vcenter{\hbox{\includegraphics[scale=.2]{crossing3.png}}}) \rightarrow \cdots. \]
We have another long exact sequence with the remaining crossings with each of  $ \Kh_{i-\frac{1}{2}, j}(\vcenter{\hbox{\includegraphics[scale=.2]{crossing2.png}}})$ and $\Kh_{i+\frac{1}{2}, j}(\vcenter{\hbox{\includegraphics[scale=.2]{crossing3.png}}})$. We repeat this process, obtaining an exact sequence for every choice of $A$- or $B$-resolution, until all the crossings in $C(D^n)\setminus \ell^n$ are resolved. 
\begin{center}
\begin{figure}[ht]

\includegraphics[scale=.6]{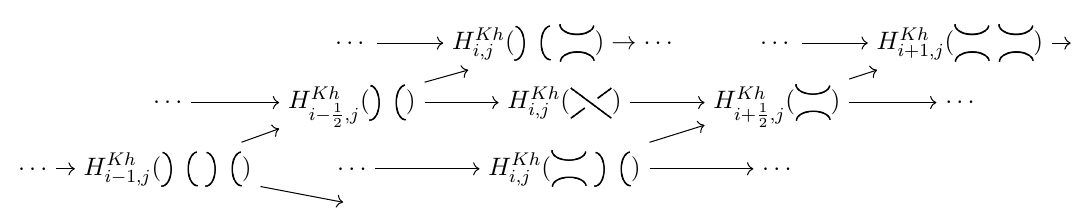}
 \caption{The pasted exact sequences from resolving two crossings is shown.}
 \end{figure}
 \end{center}
Each time a resolution is chosen, the homological degree of the constituent homology group in the exact sequence is shifted by opposite the sign of the choice of the resolution. If each of these homology groups is trivial, then the original homology group is trivial. 

\end{proof}

\begin{lem} \label{lem:disjointsk} Let $D^{n+1}_{\vcenter{\hbox{\def\svgwidth{.020\columnwidth} }}}$ be the $n$-cable of a non $A$-adequate link diagram $D$ decorated by a Jones-Wenzl projector and let $\sigma$ be a Kauffman state on $C(D^{n+1} \setminus \ell^{n+1})$ with $\ell^{n+1}$ the set of $n$-cabled crossings corresponding to the loop $\ell$ of $s_A(D)$. 
The skein $\Sk_{\sigma}$ obtained by applying $\sigma$ to $D^n_{\vcenter{\hbox{\def\svgwidth{.020\columnwidth} }}}$ contains a cap or a cup composed with a Jones-Wenzl projector unless it is isotopic to a disjoint union of a skein $\Sk_{\sigma}^k$ with circles, where $\Sk_{\sigma}^k$ has the form as shown in Figure \ref{fig:gform}. 
\begin{figure}[ht]
\centering
    \def\svgwidth{.3\columnwidth}
    \input{gform.pdf_tex}
    \caption{\label{fig:gform} $\Sk_{\sigma}^k$, with $0\leq k \leq n+1$.}
\end{figure}
\end{lem}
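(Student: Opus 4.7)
Since the only remaining crossings of $\Sk_\sigma$ are those in $\ell^{n+1}$, outside a small disk $\mathcal{D}$ containing the $(n+1)\times(n+1)$ grid $\ell^{n+1}$ the skein is a planar $1$-manifold decorated by the Jones--Wenzl projectors coming from $D^{n+1}_{\jwproj}$. Concretely, it consists of arcs pairing up the $4(n+1)$ boundary points of $\mathcal{D}$ with each other and with the input strands of the projectors, together with some number of disjoint closed loops.

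The first observation is the defining contractibility property $\jwproj_{n+1}\circ e^i_{n+1}\sim 0$ of Theorem \ref{thm:uniprop}: if two adjacent input strands at any projector of $\Sk_\sigma$ are joined by a $\sigma$-arc that does not visit $\ell^{n+1}$, then $\Sk_\sigma$ contains a cup or cap composed with that projector, and we are in the first alternative of the lemma. It therefore suffices to analyze those $\sigma$ for which no such adjacent projector pairing occurs.

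Under this no-cap/cup hypothesis, the structure of $\Sk_\sigma$ is rigid. The bad-loop condition asserts that both $A$-resolution arcs at $\ell$ lie on a single state circle $C$ of $s_A(D)$, so cabling produces $n+1$ nested parallel copies of $C$ providing a planar route between the two opposite sides of $\mathcal{D}$ through the rest of $D^{n+1}$. I plan to argue, by induction on the number of crossings at which $\sigma$ differs from the all-$A$ state outside $\ell^{n+1}$, that each non-closed $\sigma$-arc outside $\mathcal{D}$ must either run parallel to a copy of $C$ (directly connecting the two sides of $\mathcal{D}$) or reroute through a projector in the unique fan pattern consistent with planarity and the no-cap/cup constraint. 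Letting $k$ denote the number of boundary strand-pairs of $\mathcal{D}$ that end up rerouted through the projectors, the resulting non-closed part of $\Sk_\sigma$ is then isotopic to $\Sk_\sigma^k$, while any remaining $\sigma$-loops give the disjoint circles appearing in the statement.

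The chief obstacle is this last rigidity step: turning the local cap/cup condition into the global combinatorial statement that $\Sk_\sigma$ has the symmetric fan form of Figure \ref{fig:gformp}. The delicate point is that flipping a single non-$\ell$ crossing from its $A$- to its $B$-resolution generically re-pairs several cabled strands at once, so one must systematically rule out every planar re-pairing that would nest a cap or a cup inside another. This will require tracking how crossings on the inside versus the outside of the bad state circle $C$ affect the cyclic order of strands at each projector, which I expect to be the most technical part of the argument.
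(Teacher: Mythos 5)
There is a genuine gap here, and you have in fact identified it yourself: the ``rigidity step'' turning the local no-cap/cup condition into the global fan form of Figure \ref{fig:gform} is the entire content of the lemma, and your proposal only sketches a plan for it rather than carrying it out. Moreover, the plan you sketch --- inducting on the number of crossings where $\sigma$ chooses the $B$-resolution, and tracking how each flip re-pairs the cabled strands --- is both harder than necessary and not a natural fit for the statement, since the claim is about the final crossing-free configuration and does not care in what order or pattern the $B$-resolutions occurred. As you note, a single resolution change can re-pair many strands at once, so an induction along these lines will force you to account for a combinatorial explosion of intermediate planar pairings.

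The paper's argument avoids this entirely. It never mentions the all-$A$ state, the state circle $C$, or the one-edged-loop hypothesis at this point (the bad-loop condition is only used later, in the lemma bounding $c \geq k$). Instead it decomposes $\Sk_\sigma$ into a disk $D_1$ containing the cabled crossing $\ell^{n+1}$ together with the four nearby projectors, and its complement $D_2$, which after applying $\sigma$ is a \emph{crossing-free} planar tangle joining the $4(n+1)$ free endpoints of the four projectors (labelled tl, tr, ll, lr). One then argues purely from planarity and endpoint counting: an arc from the tl projector to the lr projector (or tr to ll) would, since arcs are non-crossing, separate one of the other two projectors and force a pairing of two of its endpoints, i.e.\ a cap or cup. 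So in the no-cap/cup case only tr--lr and tl--ll arcs occur, and another endpoint count forces their numbers to agree, giving the form $\Sk_\sigma^k$. This is a short direct argument about a single planar matching of four fans of $n+1$ points, rather than an induction over resolutions; you may want to try completing your rigidity step by this route instead.
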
 

\begin{proof}

The skein $\Sk_{\sigma}$ may be written as a composition of the skeins of two disks $\Sk(D_1, 2(n+1))$, $\Sk(D_2, 2(n+1))$, one containing the $n$-cabled crossing, and the other containing non-intersecting strands between the projectors, see Figure \ref{fig:gformd}.  

\begin{figure}[H]
\begin{center}
\def\svgwidth{.5\columnwidth}
    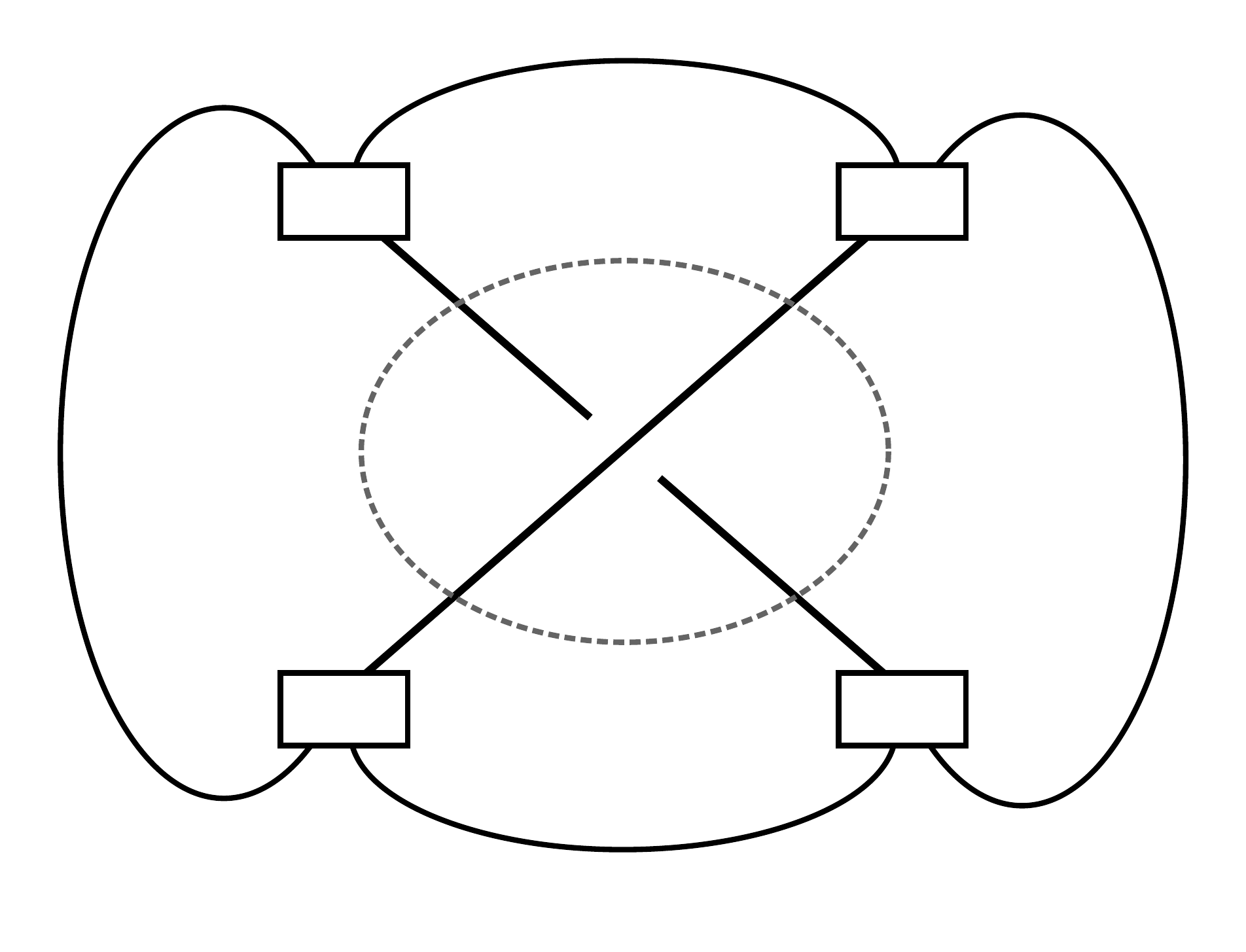
    \caption{\label{fig:gformd}}
    \end{center}
\end{figure}

Configurations of these non-intersecting strands in $D_2$ are determined by the presence of a cap/cup composed with a projector. A configuration containing a cap/cup composed with a projector will result in a homotopically trivial complex. It is straightforward to see that a non-trivial configuration has to have the form indicated in Figure \ref{fig:gform} as follows. Here we flatten out the disk $D_2$ and denote the four projectors by tl (top-left), tr (top-right), ll (lower-left), and  lr (lower-right) as shown in Figure \ref{fig:diskarg}. 

\begin{figure}[H]
\begin{center}
\def\svgwidth{.7\columnwidth}
    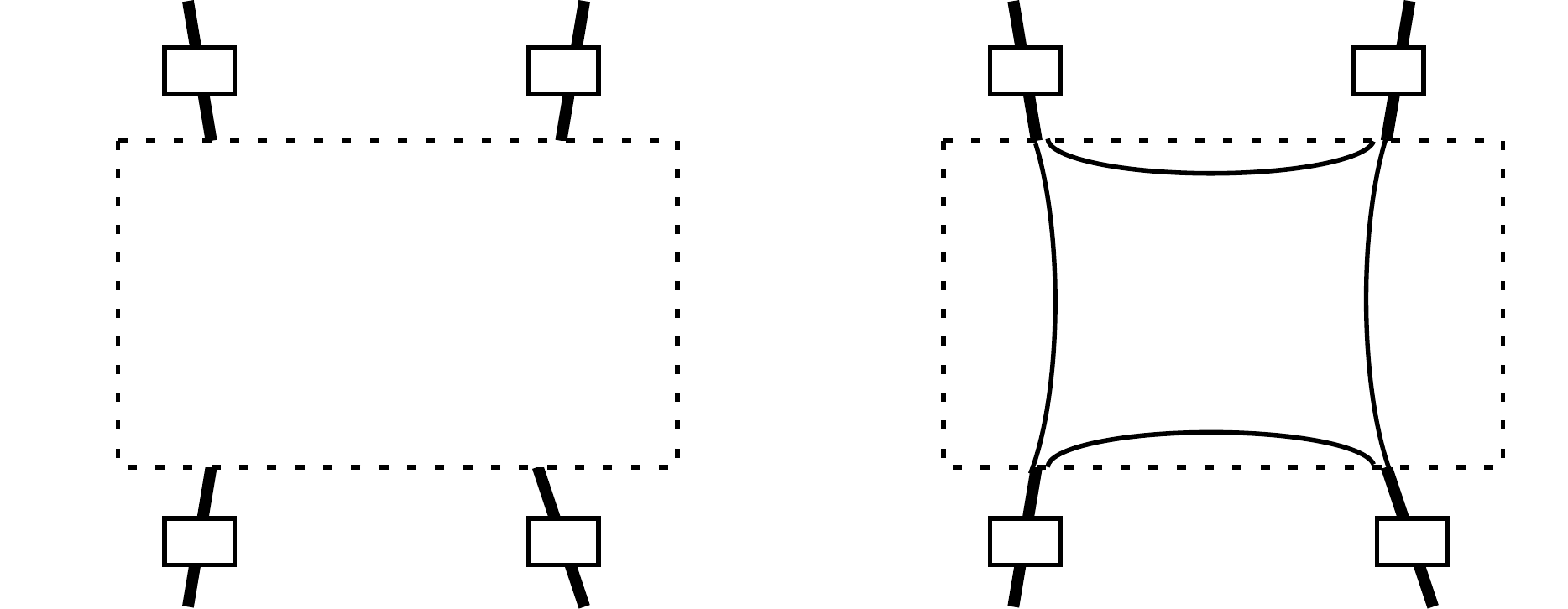
    \caption{\label{fig:diskarg}}
    \end{center}
    \end{figure}

We first argue that, from the perspective shown for $D_2$ in Figure \ref{fig:diskarg}, we cannot have arcs connecting the tl projector to the lr projector, since that will necessarily result in a cap/cup composed with the tr or ll projector by comparing the number of strands to the available endpoints. This also uses the fact that the strands cannot intersect. There cannot be arcs connecting the tr projector to the ll projector for the same reason. Thus we can only have arcs between the tr and lr projectors, and arcs between the tl and ll projectors. The number of strands between each such pair is denoted by $k, k'$, respectively. If $k \not= k'$ then again there is a cap/cup composed with a projector. Therefore $k = k'$ and we come to the form in the lemma. 
\end{proof}

\subsection{Long exact sequences of homology from local transformations}
A \emph{local replacement}, as it is called in \cite{Roz12}, is a triple $M=(\tau_i, \tau_c, \tau_f)$ of tangle diagrams decorated by the Jones-Wenzl projector. Let a triple of diagrams $(D_i, D_c, D_f)$ differ locally in the skeins $(\tau_i, \tau_c, \tau_f)$. 

Let $n_f = \frac{1}{2}c(D_f)-\frac{1}{2}c(D_i)$, $m_f = |s_A(D_f)|-|s_A(D_i)|$ (here we take the $A$-resolution on \emph{all} of the crossings of $D_f$, $D_i$), and $\tau_f' = \mathbf{h}^{n_f}\mathbf{q}^{m_f} \tau_f$. 
If there is a mapping cone presentation of $\tau_i$: 
\begin{equation} \db{\tau_i} \sim \fbox{$\db{\tau_c} \stackrel{f}{\rightarrow} \db{\tau_f'}$}.  \label{eqn:lcmcone}\end{equation} 
Then we also have the mapping cone presentation 
\begin{equation} \db{\tau_c} = \fbox{$\mathbf{h}\db{\tau'_f} \stackrel{g}{\rightarrow} \db{\tau_i}$}  \label{eqn:lcmcone}\end{equation} from the natural map $\tau_f' \stackrel{g}{\rightarrow} \tau_i$. 
Here we indicate explicitly the map $f$ in the mapping cone presentation for $\db{\tau_i}$. Otherwise, the notation agrees with that of \cite[Pg. 554, (3.1)]{Roz12}. A \emph{local transformation} is a local replacement $(\tau_i, \tau_c, \tau_f)$ along with a map $g:\tau_f' \rightarrow \tau_i$ specifying the mapping cone presentation \eqref{eqn:lcmcone}.

With the natural map $\tau_i \rightarrow \tau_c$, this induces a long exact sequence of homology groups.  
\begin{equation} \mathbf{h}^{-1}\Kh(D_c') \stackrel{f}{\rightarrow} \widetilde{\Kh}(D_f) \stackrel{g}{\rightarrow} \widetilde{\Kh}(D_i) \rightarrow \Kh(D_c'), \label{eqn:LES} \end{equation} 
where $H^{Kh}(D_c') = \mathbf{h}^{\frac{1}{2}c(D_i)} \mathbf{q}^{|s_A(D_i)|} H^{Kh}(D_c).$ 
The long exact sequence implies the following.　
\begin{thm}{\cite[Proposition 3.1]{Roz12}} \label{thm:hles}
 If $\Kh_{i, *}(D_c)=0$ for $i \leq m-1$, then the degree-preserving map 
\[ \widetilde{\Kh}(D_f) \stackrel{g}{\rightarrow} \widetilde{\Kh}(D_i),  \] is an isomorphism on  $\widetilde{\Kh_{i, *}}(D_f)$ for 
\begin{equation}
i \leq m + \frac{1}{2}c(D_i)-2. 
\end{equation} 
\end{thm}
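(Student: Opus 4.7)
The plan is to read off the isomorphism range directly from the long exact sequence \eqref{eqn:LES} associated to the mapping cone presentation \eqref{eqn:lcmcone}, after carefully tracking the grading shifts; there is no additional topological input.

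First I would fix a homological degree $i$ and extract from \eqref{eqn:LES} the four-term segment
\[
\bigl(\mathbf{h}^{-1}\Kh(D_c')\bigr)_{i,*} \stackrel{f}{\longrightarrow} \widetilde{\Kh_{i,*}}(D_f) \stackrel{g}{\longrightarrow} \widetilde{\Kh_{i,*}}(D_i) \longrightarrow \Kh_{i,*}(D_c').
\]
By exactness, $g$ is an isomorphism on $\widetilde{\Kh_{i,*}}(D_f)$ as soon as both outer terms vanish, so the task reduces to identifying when this happens in terms of the hypothesis on $\Kh(D_c)$.

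Next I would translate the two outer groups back to $\Kh(D_c)$. Using the defining relation $\Kh(D_c') = \mathbf{h}^{\frac{1}{2}c(D_i)}\mathbf{q}^{|s_A(D_i)|}\Kh(D_c)$ together with the convention $(\mathbf{h}^k\mathbf{A})_i = A_{i-k}$, one obtains
\[
\Kh_{i,*}(D_c') = \Kh_{\,i-\frac{1}{2}c(D_i),\,*-|s_A(D_i)|}(D_c), \qquad \bigl(\mathbf{h}^{-1}\Kh(D_c')\bigr)_{i,*} = \Kh_{\,i+1-\frac{1}{2}c(D_i),\,*-|s_A(D_i)|}(D_c).
\]
Applying the hypothesis $\Kh_{j,*}(D_c) = 0$ for $j \leq m-1$ yields two constraints; the tighter one is $i+1-\frac{1}{2}c(D_i) \leq m-1$, equivalently $i \leq m + \frac{1}{2}c(D_i) - 2$. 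For such $i$ both outer terms of the four-term segment vanish and $g$ is an isomorphism on $\widetilde{\Kh_{i,*}}(D_f)$, which is exactly the claim.

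The proof contains essentially no difficulty beyond bookkeeping of grading shifts; the one point that must be taken on faith is that the normalization built into $D_c'$, namely the shift by $\mathbf{h}^{\frac{1}{2}c(D_i)}\mathbf{q}^{|s_A(D_i)|}$, is precisely what is required for the maps $f$ and $g$ in \eqref{eqn:LES} to preserve both the homological and quantum gradings of the shifted homologies $\widetilde{\Kh}$. This alignment is forced by the choice of $n_f$ and $m_f$ in the definition of the local transformation, and once it is established the theorem is immediate from the LES and the shift identities above.
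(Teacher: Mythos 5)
Your argument is correct and is exactly the route the paper intends: the paper does not spell out a proof but simply states the long exact sequence \eqref{eqn:LES} and writes ``the long exact sequence implies the following,'' citing Rozansky. Your bookkeeping of the shifts $\Kh_{i,*}(D_c') = \Kh_{i-\frac{1}{2}c(D_i),\,*}(D_c)$ and $(\mathbf{h}^{-1}\Kh(D_c'))_{i,*} = \Kh_{i+1-\frac{1}{2}c(D_i),\,*}(D_c)$ is right, and the two vanishing conditions do give $i \leq m + \tfrac{1}{2}c(D_i) - 2$ as the binding constraint, so the proposal matches the paper's (and Rozansky's) argument with the grading details filled in.
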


\subsection{Local transformations} \label{subsec:lt}

We exhibit three local transformations $M_I$, $M_{II}$, and $M_{III}$ shown below in Figure \ref{fig:localskein1}, \ref{fig:localskein2}, and  \ref{fig:localskein3}, and we apply Theorem \ref{thm:hles} to triples of diagrams corresponding to transformations. The local transformations are the same as those from \cite{Roz12} with obvious adjustments for the variable change in our conventions. However, we prove the statements for a different set of triples of diagrams. In particular,  we consider triples of diagrams from the skein $\Sk_{\sigma}^k$ differing locally in transformations $M_I, M_{II}$ and  $M_{III}$. Since the state $\sigma$ is not relevant for this part of the argument, we will suppress the notation indicating $\sigma$ and just refer to $\Sk^k$ for Section \ref{subsec:lt}-\ref{subsec:simpliflysk}.

To specify the location where we apply the local transformation, we will label the projectors of $\Sk^k$ as shown in the following figure. 

\begin{figure}[H]
\centering
\def\svgwidth{.4\columnwidth}
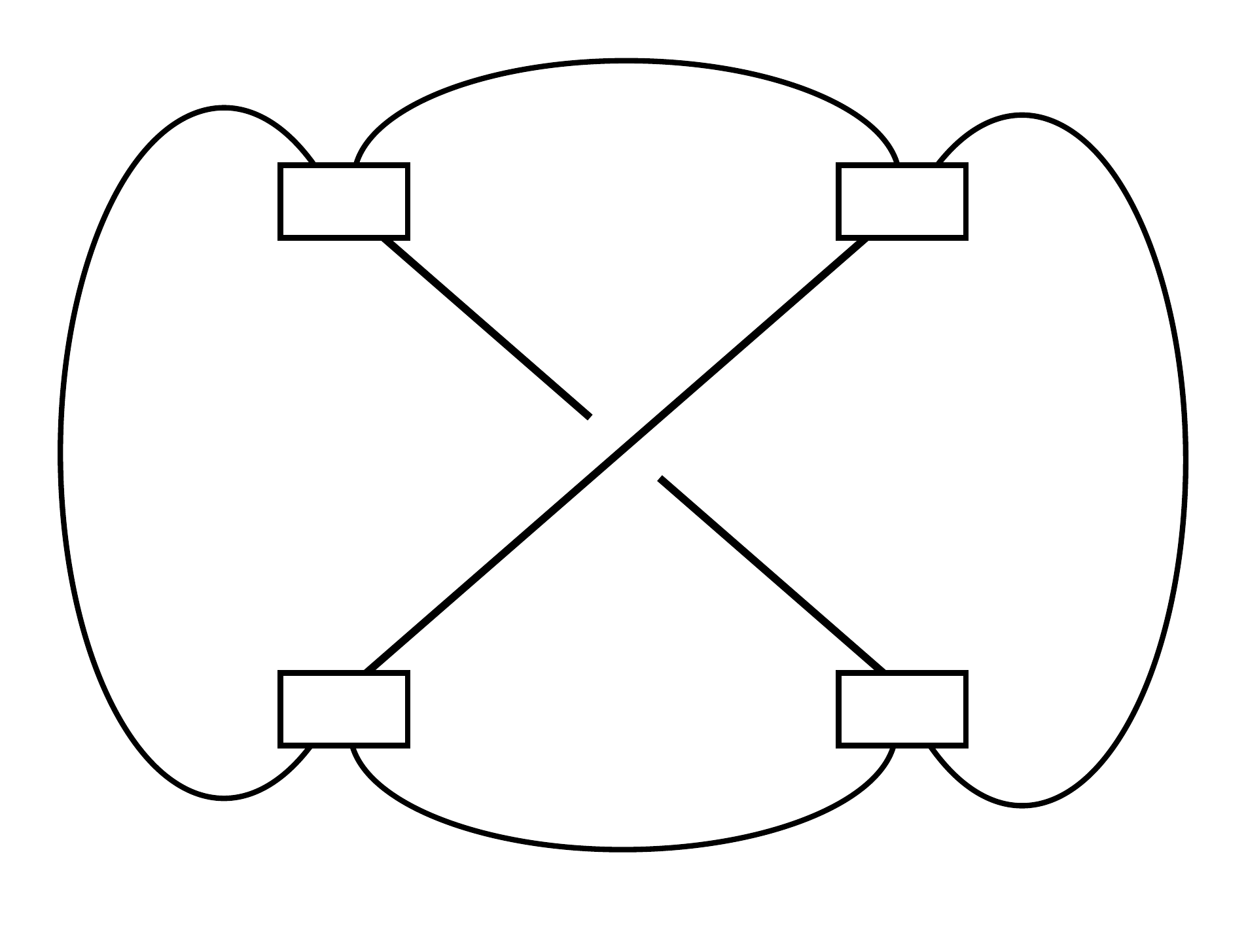
\caption{\label{fig:gforml} The top-left, top-right, lower-left, and lower-right projectors.}
\end{figure}

\subsection{Local transformation I} \ \\

\begin{prop}{\cite[Proposition 4.1]{Roz12}} \label{p:lcm1}
Let $\Sk^k$ be a skein of the form in Figure \ref{fig:gform}. Consider the local transformation $M_I = (\tau_i, \tau_c, \tau_f)$ on the $n$-cabled crossing in $\Sk^k$, with $(D_i, D_c, D_f)$ as shown in Figure \ref{fig:gformlcm1}.

\begin{figure}[H]
\centering
\def\svgwidth{.9\columnwidth}
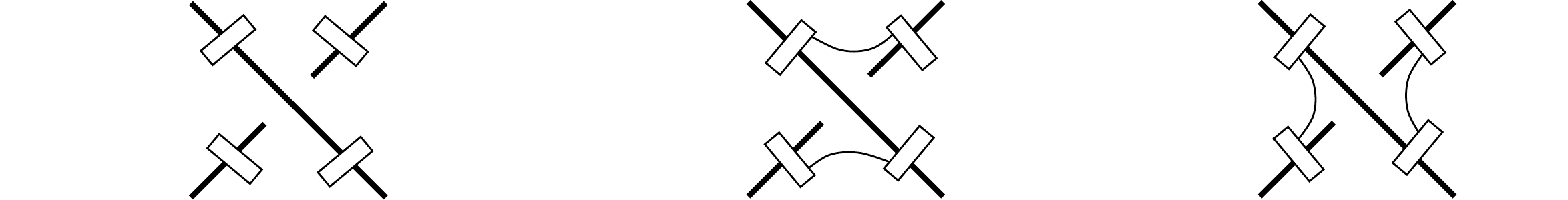
\caption{\label{fig:localskein1} Local transformation $M_{I}$. This is a version of the skein relation in Definition \ref{defn:skein}, see also \cite{Yam89}. \label{fig:ls1}}
\end{figure}
Then
\begin{enumerate}[(a)]
\item there is a map $f_{I}$ such that we have a mapping cone presentation 
\[\tau_i = \fbox{$\tau_c \stackrel{f_I}{\rightarrow} \tau_f'$},\]
where $\tau_f' = \mathbf{h}^{-n-\frac{1}{2}}\tau_f$. 

\item The induced degree-preserving map
\[ \widetilde{\Kh}(D_f) \stackrel{g_I}{\rightarrow} \widetilde{\Kh}(D_i) \] is an isomorphism on $\widetilde{\Kh_{i, *}}$ for $i\leq 2n-1$.

\begin{figure}[H]
\centering
\def\svgwidth{1\columnwidth}
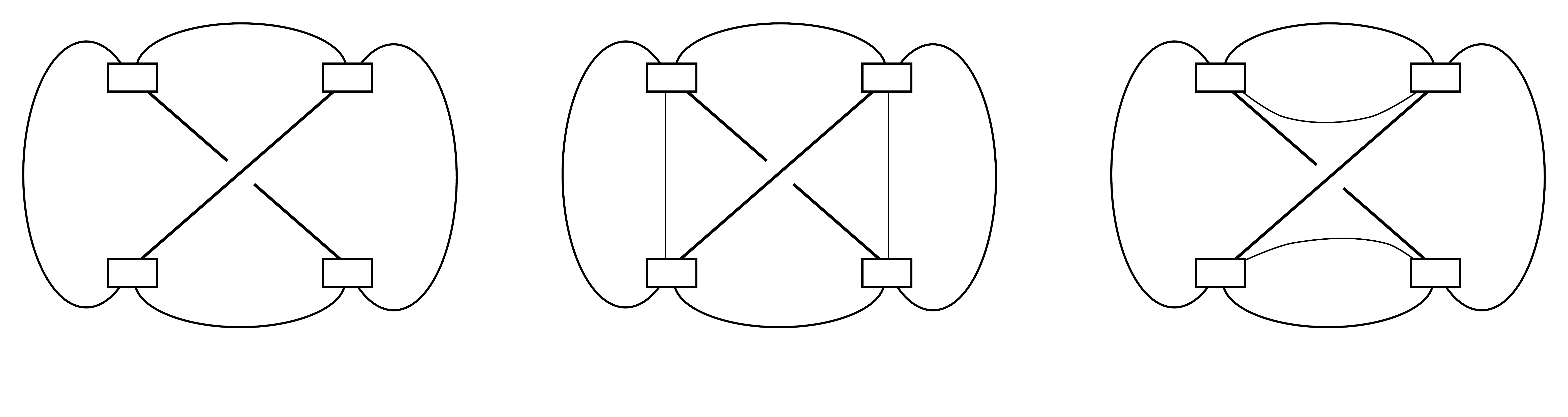
\caption{\label{fig:gformlcm1}}
\end{figure}

\end{enumerate}
\end{prop}
\begin{proof} \ \\
\begin{enumerate}[(a)]
\item The map inducing the mapping cone presentation is the saddle cobordism 
\[ \vcenter{\hbox{\includegraphics[scale=.2]{crossing1.png}}} = \text{Cone}\left(\textbf{h}^{-\frac{1}{2}} \ \vcenter{\hbox{\includegraphics[scale=.2]{crossing2.png}}} \ \stackrel{s}{\rightarrow} \textbf{h}^{-\frac{1}{2}} \ \vcenter{\hbox{\includegraphics[scale=.2]{crossing3.png}}}\right). \]

The argument follows from that of \cite[Proposition 4.1]{Roz12} without further adjustment except for the sign of the crossing since his arguments are completely local.

\item 
By Theorem \ref{thm:hles}, we need to show that $\Kh_{i, *}(D_c)=0$ for 
$i\leq 2n-\frac{1}{2}c(D_i)$. Note that $c(D_{c})=c(D_i)-2n-1$. By Lemma \ref{lem:hoestimate}, $H_{i, *}^{Kh}(D_c) = 0$ for 
\[ i <-\frac{1}{2}c(D_c) + \underbrace{n+\frac{1}{2}}_{\text{the 
$\mathbf{h}^{n+\frac{1}{2}}$ shifting on $D_c$}} = -\frac{1}{2}c(D_i) + 2n+1.\] 

\end{enumerate}
\end{proof} 

\subsection{Local transformation II}

Before showing the second local transformation we will use yet another mapping cone presentation for the Jones-Wenzl projector. The arguments in \cite{Roz12} are again completely local so we will refer the reader to \cite{Roz12} for the proofs.

\begin{thm}{\cite[Theorem 3.8]{Roz12}} \label{lem:jwidfinal}
The Jones-Wenzl projector has the following mapping cone presentation. 
\begin{figure}[H]
    \centering
    \def\svgwidth{.9\columnwidth}
    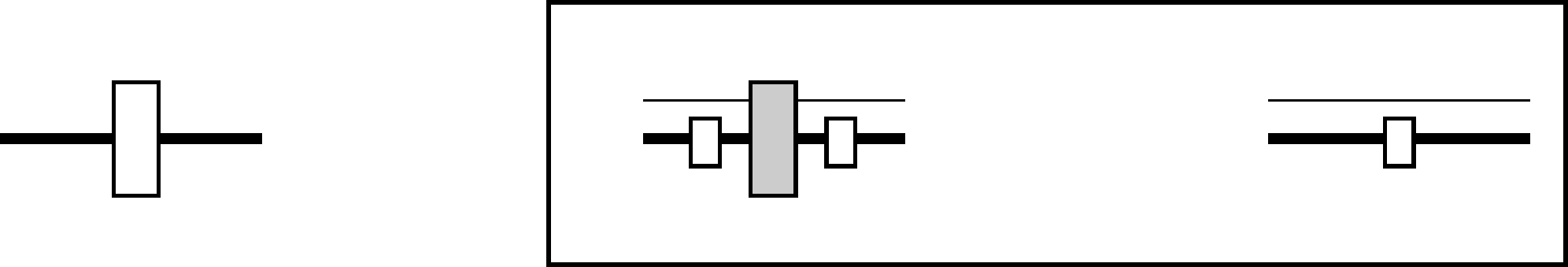, 
\end{figure}
where the first complex of the mapping cone is homotopic to a complex in which each individual term has the form indicated in the right-hand side of \eqref{eqn:tc1}.
\begin{figure}[H]
\begin{equation} \label{eqn:tc1}
\centering
\def\svgwidth{1.5\columnwidth}
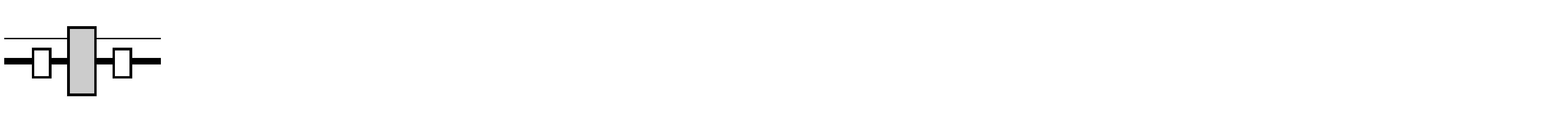
\end{equation}
\end{figure}
\end{thm} 

We would just like to remark that in Rozansky's proof, the map giving the mapping cone presentation comes from Lemma \ref{lem:jwidc12} and Lemma \ref{lem:jwmcone}.

Let $\tau_c = \def\svgwidth{.15\columnwidth} %% Creator: Inkscape inkscape 0.48.4, www.inkscape.org
%% PDF/EPS/PS + LaTeX output extension by Johan Engelen, 2010
%% Accompanies image file '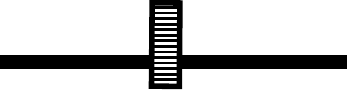' (pdf, eps, ps)
%%
%% To include the image in your LaTeX document, write
%%   \input{<filename>.pdf_tex}
%%  instead of
%%   \includegraphics{<filename>.pdf}
%% To scale the image, write
%%   \def\svgwidth{<desired width>}
%%   \input{<filename>.pdf_tex}
%%  instead of
%%   \includegraphics[width=<desired width>]{<filename>.pdf}
%%
%% Images with a different path to the parent latex file can
%% be accessed with the `import' package (which may need to be
%% installed) using
%%   \usepackage{import}
%% in the preamble, and then including the image with
%%   \import{<path to file>}{<filename>.pdf_tex}
%% Alternatively, one can specify
%%   \graphicspath{{<path to file>/}}
%% 
%% For more information, please see info/svg-inkscape on CTAN:
%%   http://tug.ctan.org/tex-archive/info/svg-inkscape
%%
\begingroup%
  \makeatletter%
  \providecommand\color[2][]{%
    \errmessage{(Inkscape) Color is used for the text in Inkscape, but the package 'color.sty' is not loaded}%
    \renewcommand\color[2][]{}%
  }%
  \providecommand\transparent[1]{%
    \errmessage{(Inkscape) Transparency is used (non-zero) for the text in Inkscape, but the package 'transparent.sty' is not loaded}%
    \renewcommand\transparent[1]{}%
  }%
  \providecommand\rotatebox[2]{#2}%
  \ifx\svgwidth\undefined%
    \setlength{\unitlength}{100bp}%
    \ifx\svgscale\undefined%
      \relax%
    \else%
      \setlength{\unitlength}{\unitlength * \real{\svgscale}}%
    \fi%
  \else%
    \setlength{\unitlength}{\svgwidth}%
  \fi%
  \global\let\svgwidth\undefined%
  \global\let\svgscale\undefined%
  \makeatother%
  \begin{picture}(1,0.25673703)%
    \put(0,0){\includegraphics[width=\unitlength]{lcm3c.pdf}}%
  \end{picture}%
\endgroup%
_{n+1}$ be a shorthand notation for the complex of the right-hand side of the homotopy equivalence \eqref{eqn:tc1}, we can now present local transformation II.

\begin{prop}{\cite[Proposition 4.2]{Roz12}} \label{p:lcm2}
Let $\Sk^k_I$ be the skein obtained from $\Sk^k$ by an application of the local transformation I to the only $n$-cabled crossing as in Proposition \ref{p:lcm1}. Pick the top-right projector (recall the labelling in the caption of Figure \ref{fig:gforml}) and consider the triple of local transformation $M_{II} = (\tau_i, \tau_c, \tau_f)$ applied to the chosen projector of $\Sk^k_I$, with $(D_i, D_c, D_f)$ as shown in Figure \ref{fig:gformlcm2}.
\begin{figure}[H]
\centering
\def\svgwidth{\columnwidth}
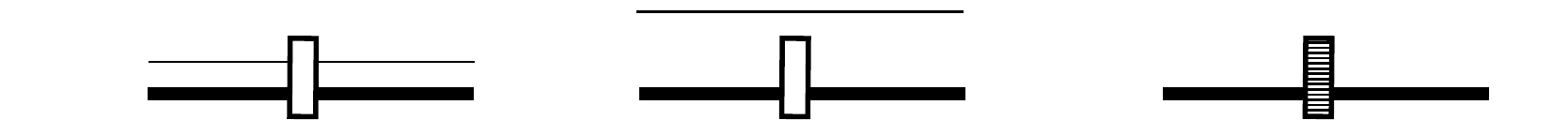
\caption{\label{fig:localskein2} Local transformation $M_{II}$. The complex $\db{\tau_c}$ comes from the first complex of the mapping cone presentation in Theorem \ref{lem:jwidfinal}.}
\end{figure}

Then 
\begin{enumerate}[(a)]
\item there is a map $f_{II}$ such that we have a mapping cone presentation 
\[\tau_i = \fbox{$\tau_c \stackrel{f_{II}}{\rightarrow} \tau_f'$},\]
where $\tau_f' = \tau_f $, and
\item \label{p:lcm2b} the induced degree-preserving map
\[ \widetilde{\Kh}(D_f) \stackrel{g_{II}}{\rightarrow} \widetilde{\Kh}(D_i) \] is an isomorphism on $\widetilde{\Kh_{i, *}}$ for $i\leq n-1$.
\begin{figure}[H]
\centering
\def\svgwidth{1\columnwidth}
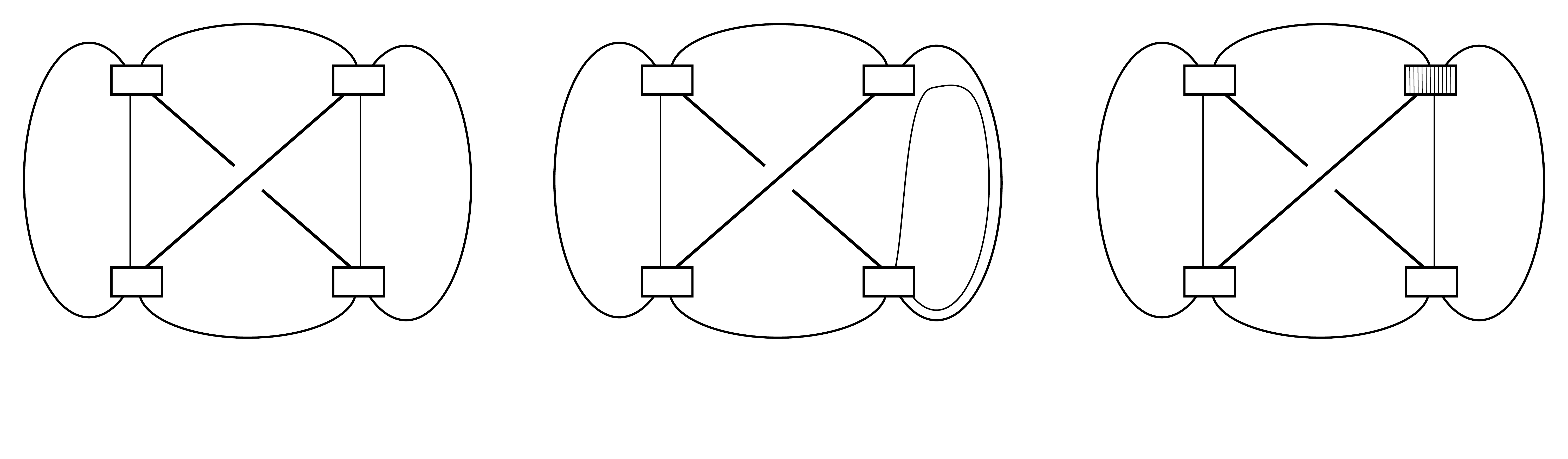
\caption{\label{fig:gformlcm2}}
\end{figure}
\end{enumerate}
Let $\tilde{S}^k_{I}$ be the skein $D_f$, and now let $D_i =\tilde{S}^k_{I}$,  we have the same statement for the triple $M_{II} = (\tau_i, \tau_c, \tau_f)$ applied to the lower-left  projector of $\tilde{S}^k_{I}$. 
\begin{figure}[H]
\centering
\def\svgwidth{1\columnwidth}
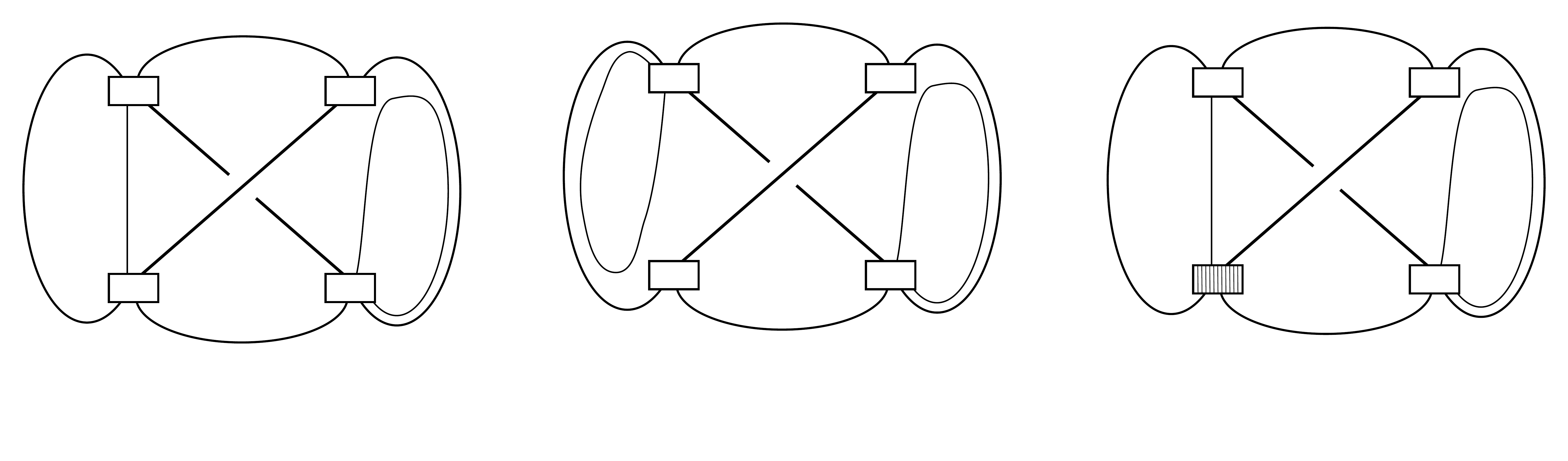
\end{figure}
\end{prop}

\begin{proof} \ \\
\begin{enumerate}[(a)] 
\item 
We have the desired mapping cone presentation by Theorem \ref{lem:jwidfinal}. Note that $c(D_f) = c(D_i)$ and $|s_A(D_f)| = |s_A(D_i)|$.
   
\item    
As in Rozansky's proof, there is the following equivalence by sliding a single strand along the projector for each non-trivial term in the complex $\def\svgwidth{.15\columnwidth} $. The last step comes from Lemma \ref{lem:psimplify}.

\begin{figure}[H]
\centering
\def\svgwidth{.8\columnwidth}
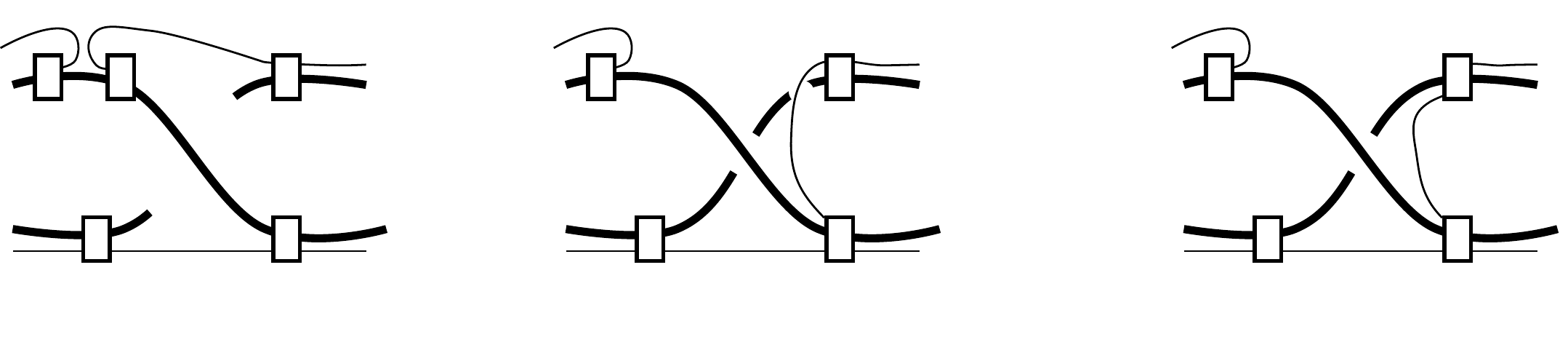
\end{figure} 

The number of crossings is reduced in the final chain complex, say $D_e$, by $n$, therefore $c(D_e) = c(D_i) - n$. Lemma \ref{lem:hoestimate} says that $|D_e| \geq -\frac{1}{2}c(D_e)$. Therefore, $\Kh_{i, *}(D_c) = 0$ for $i \leq (-\frac{1}{2}c(D_e))-1+1 = (-\frac{1}{2}c(D_i)+n)$ (also accounting for the degree shift of $\mathbf{h}^{\frac{1}{2}n}$ on $D_e$ and the shift of $\mathbf{h}$ on $D_c$.) This implies by Theorem \ref{thm:hles}, that $\widetilde{\Kh}(D_f)\stackrel{g_{II}}{\rightarrow} \widetilde{\Kh}(D_i)$ is an isomorphism for $i<n+1-2 = n-1$. Note that we do not need to consider other possible intersections of the single strand with the rest of the diagram as Rozansky does in his proof, since in the skein $\Sk^k$ there is only one $n$-cabled crossing. 
\end{enumerate}

For the last statement of the lemma concerning $\tilde{\Sk}^k_{I}$, we apply the exact same steps to the lower-left projector of $\widetilde{\Sk}^k_I$. 
\end{proof}  

\subsection{Local transformation III}
We first expand upon the mapping cone presentations of the Jones-Wenzl projector. The next few statements are taken directly from \cite{Roz12} with minor adjustments for our convention in crossing changes and reorganized. The proofs again are completely local, therefore, the reader can consult \cite{Roz12} for details.
\begin{thm}{\cite[Theorem 3.9]{Roz12}} \label{lem:lcm3he1}
There is a homotopy equivalence
\begin{figure}[H]
    \centering
    \def\svgwidth{.8\columnwidth}
    \begin{equation}
    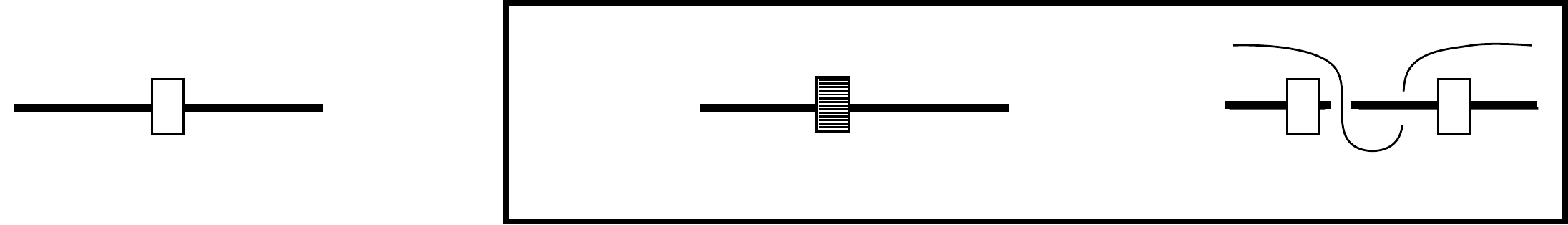
    \end{equation}
\end{figure}
\end{thm}

\begin{lem}{\cite[Lemma 3.12]{Roz12}} \label{lem:lcm3he2}
There is a homotopy equivalence
\begin{figure}[H]
\centering
    \def\svgwidth{.8\columnwidth}
    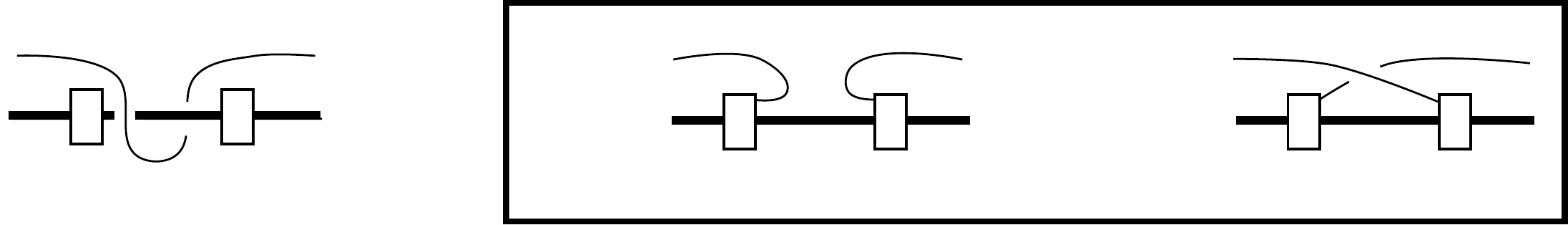
\end{figure}
\end{lem}

Combining Lemma \ref{lem:lcm3he1} and Lemma \ref{lem:lcm3he2} gives the following homotopy equivalence.

\begin{figure}[H]
\begin{equation}
\label{eq:lemcom}
\centering
    \def\svgwidth{1.1\columnwidth}
    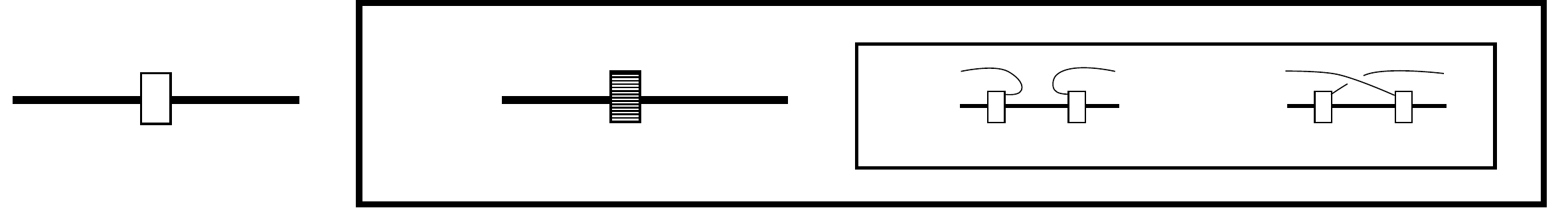
    \end{equation}
\end{figure}

Let $\mathbf{A}, \mathbf{B}$, and $\mathbf{C}$ denote the chain complexes   in the above mapping cone presentation. By Lemma \ref{lem:coneswitch}, if we let 

\begin{figure}[H]
\centering
    \def\svgwidth{.85\columnwidth}
    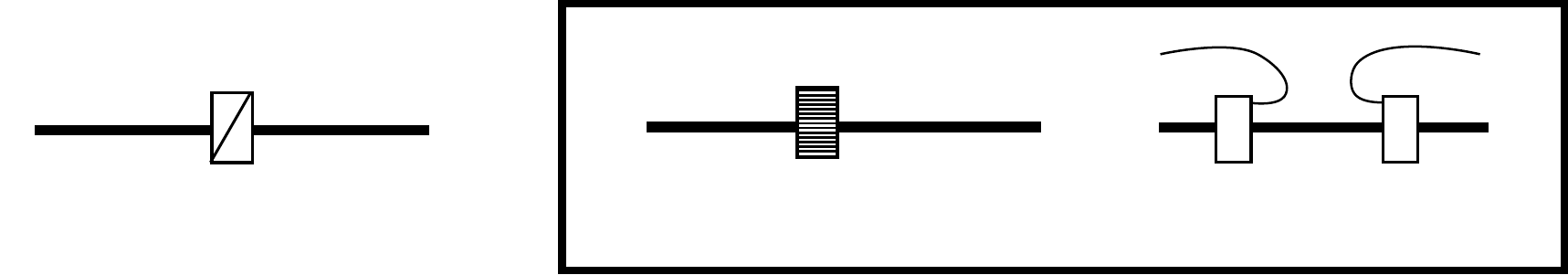
\end{figure}

be the complex $Cone(A[-1] \rightarrow B)$ resulting from writing the mapping cone presentation $Cone(A \rightarrow Cone(B \rightarrow C))$ of \eqref{eq:lemcom} as $Cone(Cone(A[-1] \rightarrow B) \rightarrow C)$, then we have the following theorem. 

\begin{thm}{\cite[Theorem 3.11]{Roz12}}\label{thm:lcm3he} The Jones-Wenzl projector has the following mapping cone presentation. 
\begin{figure}[H]
    \centering
    \def\svgwidth{.8\columnwidth}
    \begin{equation}
    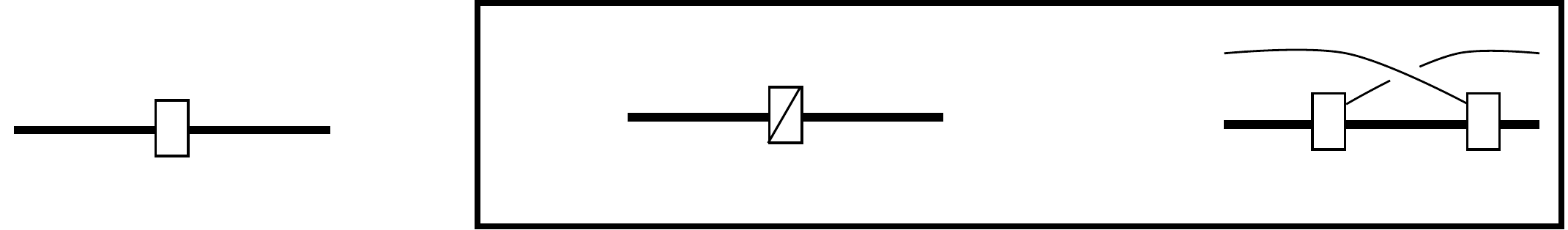
    \end{equation}
\end{figure}
\end{thm}

Now we can finally specify local transformation III.

\begin{prop}{\cite[Proposition 4.5]{Roz12}}
Let $\Sk^k_{II}$ be the skein obtained from $\Sk_{I}$ by two applications of the local transformation $M_{II}$ as in Proposition \ref{p:lcm2}, where there is a single line circle attached to the lower-right projector and a single line circle attached to the upper-left projector, with $(D_i, D_c, D_f)$ as shown in Figure \ref{fig:gformlcm3}.

\begin{figure}[H]
\centering
\def\svgwidth{\columnwidth}
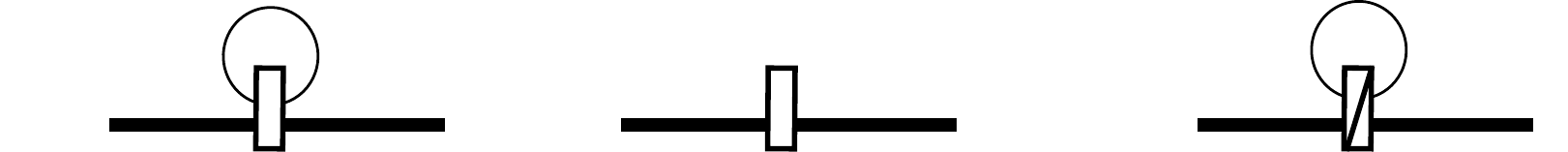
\caption{\label{fig:localskein3} Local transformation $M_{III}$.}
\end{figure}

Then
\begin{enumerate}[(a)]
\item there is a map $f_{III}$ such that we have a mapping cone presentation 
\[\tau_i = \fbox{$\tau_c \stackrel{f_{III}}{\rightarrow} \tau_f'$},\]
where $\tau_f' \sim \mathbf{q}^{-1}\tau_f$, and
\item \label{p:lcm2b} the induced degree-preserving map
\[ \widetilde{\Kh}(D_f) \stackrel{g_{III}}{\rightarrow} \widetilde{\Kh}(D_i) \] is an isomorphism on $\widetilde{\Kh_{i, *}}$ for $i\leq 2n-2$.

\begin{figure}[H]
\centering
\def\svgwidth{1.0\columnwidth}
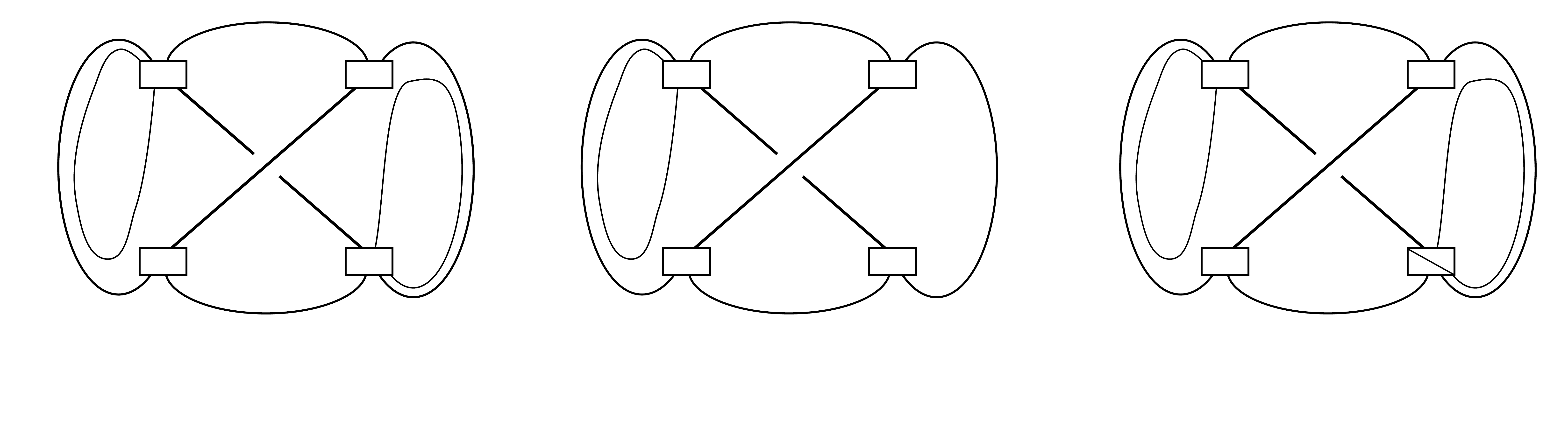
\caption{\label{fig:gformlcm3}}
\end{figure} 

\end{enumerate} 
Let $\widetilde{\Sk}^k_{II}$ be the skein $D_f$, and now  let $D_i =\widetilde{\Sk}^k_{II}$, we have the same statement for the triple $M_{III}=(\tau_i, \tau_f, \tau_c)$ applied to the upper-left projector with the remaining single-line circle attached.  

\begin{figure}[H]
\centering
\def\svgwidth{1.1\columnwidth}
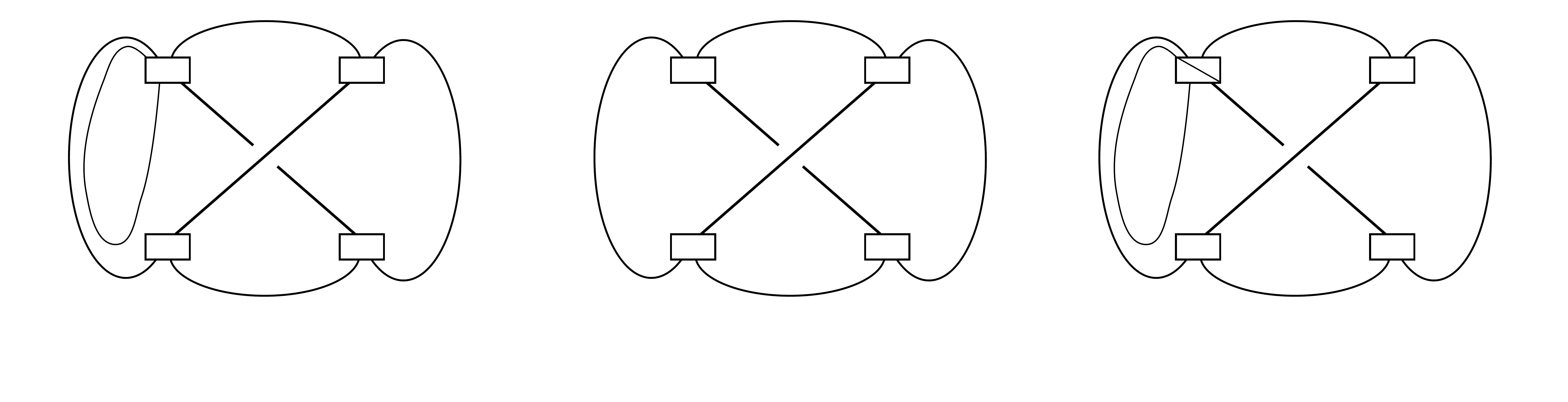
\end{figure} 

\end{prop}

\begin{proof}
\begin{enumerate}[(a)] 
\item From Theorem \ref{thm:lcm3he} we perform a type I Reidemeister move to complete the proof of part (a), see below.

\begin{figure}[ht]
\centering
    \def\svgwidth{.65\columnwidth}
    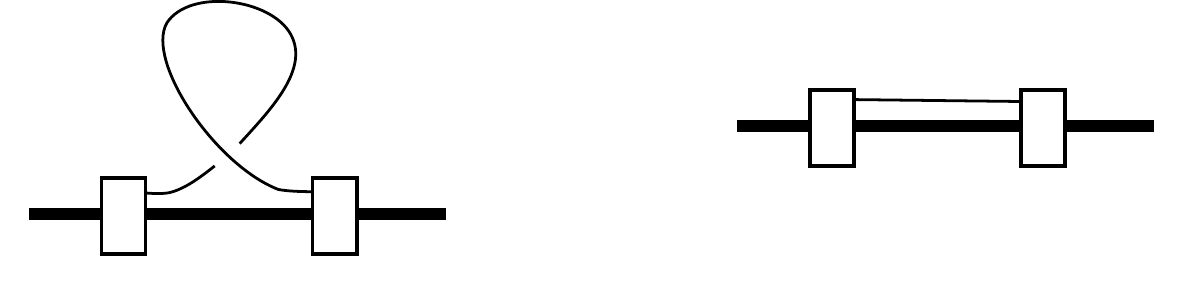
\end{figure}
Note that $c(D_i) =  c(D_f)$ and $|s_A(D_i)|-1=|s_A(D_f)|$.

\item It is clear that $D_c$ has the same number of crossings as $D_i$, so $|D_c| \geq 2n-\frac{1}{2}c(D_c) = 2n-\frac{1}{2}c(D_i)$, taking into account the degree shift $\mathbf{h}^{2n}\mathbf{q}$. Theorem \ref{thm:hles} then says that since $\Kh_{i, *}(D_c)=0$ for $i < 2n-\frac{1}{2}c(D_i)$, the map $g_{III}$ is an isomorphism on $\widetilde{\Kh_{i, *}}(D_f)$ for $i \leq 2n-\frac{1}{2}c(D_i) +\frac{1}{2}c(D_i)-2 = 2n-2$.

\end{enumerate} 
\end{proof} 

\subsection{Simplification of $\Sk^k$ via local transformations} \label{subsec:simpliflysk}
We apply the machinery set up in the previous section to understand the first few homology groups of the complex $\db{\Sk^k}$. The goal is to repeatedly apply local transformations $M_I, M_{II}$, and $M_{III}$ until we get to the cabled unknot.  

\begin{figure}[H]
\centering
    \def\svgwidth{1.1\columnwidth}
    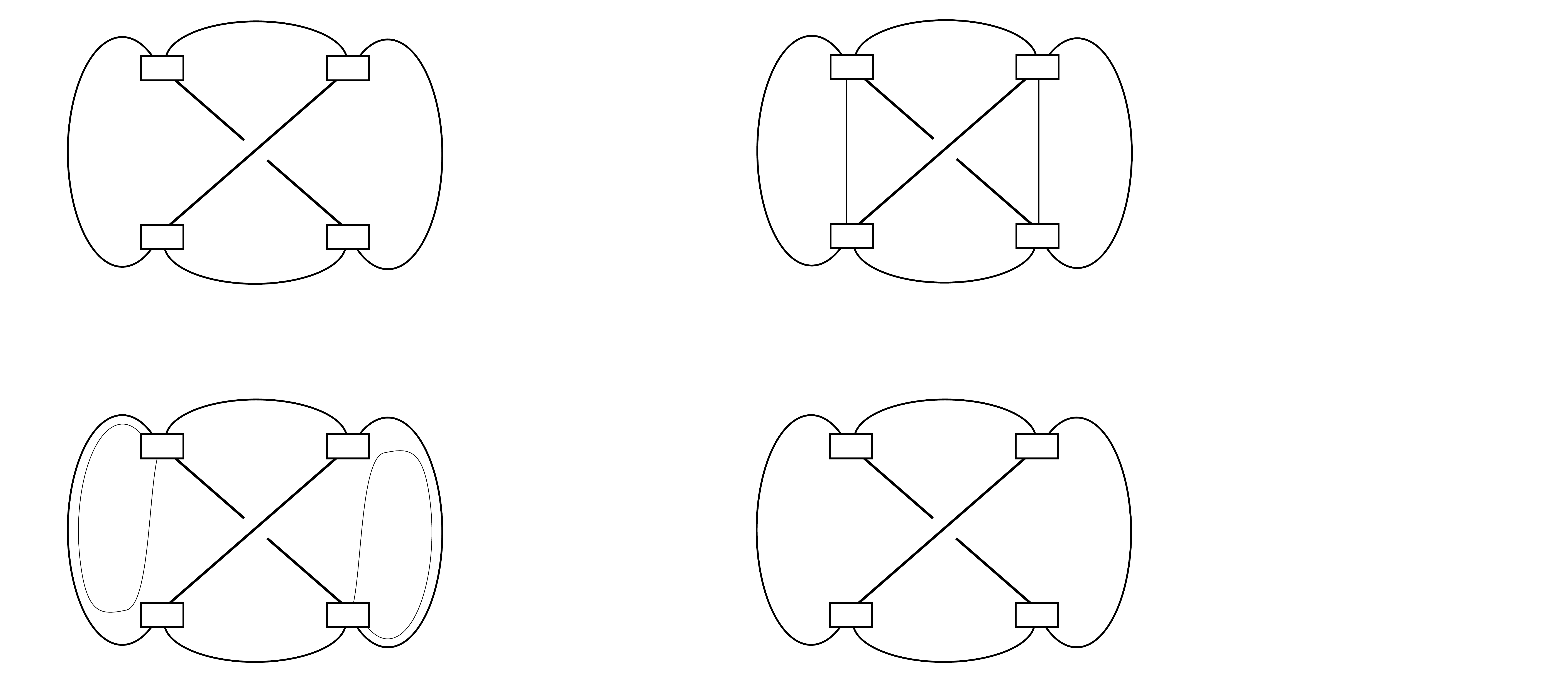
    \caption{\label{fig:gformpanel} The sequence of local transformations reducing $k$.}
\end{figure}

Let $\Sk\in \Sk(\mathbb{R}^2)$ be a skein decorated by Jones-Wenzl projectors, and let $\overline{\Sk}$ be the skein obtained by replacing the Jones-Wenzl projectors appearing in $\Sk$ by the identity. The shifted homology of $\Sk$ is defined as 
\[\widetilde{\Kh}(\Sk)= \mathbf{h}^{\frac{1}{2}c(\Sk)} \mathbf{q}^{|s_A(\overline{\Sk})|} \Kh(\Sk). \]

\begin{thm} \label{thm:dpuknot} Let $U^{n+1-k}$ be the $(n+1-k)$-cabled diagram of the standard diagram of the unknot with a left-hand twist, denoted by $U$, decorated by a Jones-Wenzl projector, see Figure \ref{fig:unknot}. There is a degree-preserving map
\[\widetilde{\db{\Sk^k}} \rightarrow \widetilde{\db{U^{n+1-k}}},  \] which is an isomorphism for $0 \leq i\leq n-k$. 
\begin{figure}[H]
\def\svgwidth{.2\columnwidth}
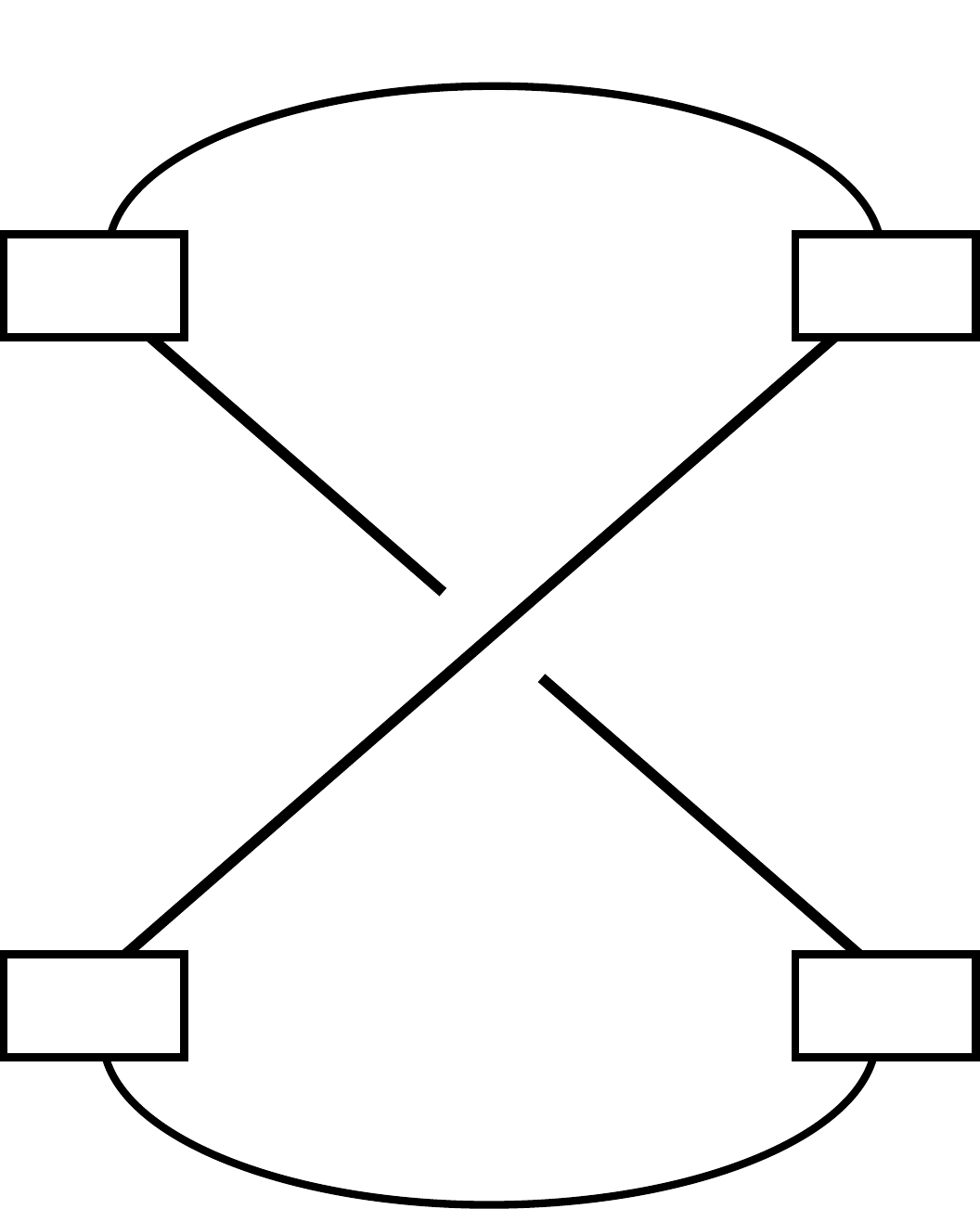
\caption{\label{fig:unknot} $U^{n+1-k}$}
\end{figure} 
\end{thm}
\begin{proof}
There is nothing to prove if $k = 0$, since then $\Sk^0 = U^{n+1}$. For $k\geq 1$, as shown in Figure \ref{fig:gformpanel} above, we may apply one $M_{I}$ move, then two $M_{II}$ moves, followed by two $M_{III}$ moves to $\Sk^{k}$, to obtain a skein with $k$ reduced to $k-1$ and an $n$-cabled crossing. The composition of the induced degree-preserving map $g_n=   g_I \circ g^2_{II} \circ g^2_{III} $ is then an isomorphism for $i \leq n-1$ by Theorem \ref{thm:hles}. We apply the same sequence of steps to the resulting skein in order to reduce $k$.  We similarly obtain a degree-preserving map $g_{n-1}$ by composition, except that this is now an isomorphism for $i< n-2$. Therefore, if we apply the sequence of steps $k$ times, we end up with a degree-preserving map $g_{n-k}$ which is an isomorphism for $0\leq i \leq n-k$. 
\end{proof} 

\subsection{Trivial homology of the unknot} \ \\

\begin{lem} \label{lem:unknottrivial} Let $0\leq k < n+1$. The homology group $\widetilde{\Kh_{i, *}}(U^{n+1-k})=0$ whenever $0\leq i \leq n-k$.   
\end{lem}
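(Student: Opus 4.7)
My plan is to uncurl the left-hand twist in $U$ using the Reidemeister-I-type homotopy equivalence \eqref{eq:twist} and then combine the resulting grading shift with the fact that the categorified Jones--Wenzl projector has minimum homological degree zero.

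Set $a = n+1-k$; the hypothesis $k < n+1$ gives $a \geq 1$. First I would apply \eqref{eq:twist} to the $a$-strand cable of the single twist in $U$. Because the left-hand twist is the mirror of the twist direction depicted in \eqref{eq:twist}, the resulting homotopy comes with the opposite sign of shift: I would obtain
\[
\db{U^{a}} \;\sim\; \mathbf{h}^{\frac{1}{2}a^{2}}\mathbf{q}^{a}\,\db{\circ_{a}},
\]
where $\circ_{a}$ denotes the plane closure of the $a$-strand Jones--Wenzl projector $\jwproj_{a}$.

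Next I would invoke Theorem \ref{thm:uniprop}(iii), which gives $|\jwproj_{a}|=0$ with the degree-zero term equal to $|_{a}$. Taking the plane closure and applying the TQFT preserves this lower bound on the homological grading, so $|\db{\circ_{a}}| = 0$ and therefore $|\db{U^{a}}| = \tfrac{1}{2}a^{2}$. Plugging into
\[
\widetilde{\Kh}(U^{a}) \;=\; \mathbf{h}^{\frac{1}{2}c(U^{a})}\mathbf{q}^{|s_{A}(\overline{U^{a}})|}\,\Kh(U^{a}),
\]
with $c(U^{a}) = a^{2}$, the minimum homological degree of $\widetilde{\db{U^{a}}}$ becomes $\tfrac{1}{2}a^{2} + \tfrac{1}{2}a^{2} = a^{2}$. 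Since $a^{2} \geq a > n-k$ for every $a \geq 1$, the homology $\widetilde{\Kh_{i,*}}(U^{n+1-k})$ vanishes throughout $0 \leq i \leq n-k$, which is the desired conclusion.

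The main subtlety is confirming the sign of the Reidemeister-I shift: I need to check that the left-hand twist in $U$ indeed contributes a $+\tfrac{1}{2}a^{2}$ shift rather than the $-\tfrac{1}{2}a^{2}$ of \eqref{eq:twist}. This is purely a matter of tracking the positive/negative crossing convention through the cone formula for the Khovanov bracket given in Section \ref{subsec:notkbracket}. Once this convention is pinned down, all that remains is the elementary inequality $a^{2} > n-k$ for $a \geq 1$, and no further machinery from the rest of the paper is needed.
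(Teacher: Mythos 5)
Your argument is correct and matches the paper's proof, which likewise uncurls the left-hand twist via \eqref{eq:twist} and uses the fact that $\jwproj_{a}$ (and hence its plane closure) has no homology in negative degrees. You track the degree shift slightly more explicitly — combining the $\mathbf{h}^{\frac{1}{2}a^{2}}$ from uncurling with the $\mathbf{h}^{\frac{1}{2}c(U^{a})}$ from the shifted-homology normalization to reach a lower bound of $a^{2}$, whereas the paper's proof only records $\tfrac{1}{2}a^{2}>a-1$ — but either inequality suffices since both exceed $n-k=a-1$.
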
 

\begin{proof}
Since we just have $U^{n+1-k}$ with a full left-hand twist we can remove the twist with a degree shift of $\mathbf{h}^{\frac{1}{2}a^2}\mathbf{q}^{a}$ for $a = n+1-k$. Note that $\frac{1}{2}a^2 > a-1$ for $a >0$.
\end{proof}

\subsection{Putting everything together}

By Corollary \ref{cor:directlimit}, it suffices to show that the homology group $\widetilde{\Kh_{n, *}}(D, n+1)$ is trivial. We will show that $\widetilde{\Kh_{i, *}}(D, n+1)$ is trivial for $0\leq i\leq n$.

\begin{thm} \label{cor:tail-trivial} The homology groups $\widetilde{\Kh_{i, *}}(D, n+1)$ are trivial for $i\leq n$.
\end{thm}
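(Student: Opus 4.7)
The plan is to combine the structural results of Sections 4.2--4.5 with the direct-limit characterization of tail homology. By Corollary \ref{cor:directlimit}, once the theorem is proved, specialization $n=i$ gives $H^{\infty}_{i,*}(D)=0$ and hence Conjecture \ref{thm:main}. The homological lower bound of Lemma \ref{lem:hoestimate}, combined with the fact that the shift $\mathbf{h}^{\frac{1}{2}(n+1)^2 c(D)}$ defining $\widetilde{\Kh}$ matches $-\tfrac{1}{2}c(D^{n+1})$, immediately gives $\widetilde{\Kh_{i,*}}(D,n+1)=0$ for $i<0$, so only the range $0\leq i\leq n$ requires argument.

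First I would expand $\db{D^{n+1}_{\jwproj}}$ as an iterated mapping cone by resolving every crossing in $C(D^{n+1})\setminus\ell^{n+1}$. Lemma \ref{lem:decompstate} then reduces the problem to showing that for every Kauffman state $\sigma$ on this set, the contribution $\widetilde{\Kh_{i',*}}(\Sk_\sigma)$ vanishes in the appropriate range of $i'$, where $i'$ is determined from $i$ by combining the $\mathbf{h}^{\sgn(\sigma)}$ shift from the state decomposition with the grading shifts entering the definition of $\widetilde{\Kh}$. Next, I apply Lemma \ref{lem:disjointsk} to classify $\Sk_\sigma$: if it contains a cap or cup composed with a Jones-Wenzl projector, the contractibility property of Theorem \ref{thm:uniprop} makes $\db{\Sk_\sigma}$ homotopically trivial, so that state contributes nothing; otherwise $\Sk_\sigma$ is isotopic to a disjoint union $\Sk_\sigma^k\sqcup(\text{circles})$ for some $0\leq k\leq n+1$, and since disjoint circles are concentrated in homological degree zero and affect only the quantum grading, the triviality in question reduces to that of $\widetilde{\Kh_{i',*}}(\Sk_\sigma^k)$.

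Finally, I invoke Theorem \ref{thm:dpuknot} together with Lemma \ref{lem:unknottrivial}: the degree-preserving map $\widetilde{\db{\Sk^k}}\to\widetilde{\db{U^{n+1-k}}}$ is an isomorphism on $\widetilde{\Kh_{i',*}}$ for $0\leq i'\leq n-k$, and the target vanishes identically throughout that range, so $\widetilde{\Kh_{i',*}}(\Sk_\sigma^k)=0$ whenever $0\leq i'\leq n-k$. The remaining step, which I expect to be the main technical obstacle, is a careful bookkeeping verification that the range $0\leq i\leq n$ in $\widetilde{\Kh}(D,n+1)$ pulls back, state by state, into the interval $[0,n-k]$ for $i'$. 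This bookkeeping uses the identities $c(\Sk_\sigma)=(n+1)^2$ and $\sgn_A(\sigma)+\sgn_B(\sigma)=\tfrac{1}{2}(n+1)^2(c(D)-1)$, the fact that disjoint circles contribute no homological shift, and the geometric constraints that formation of a genuine $\Sk_\sigma^k$ with $k$ side arcs imposes on the $A$-versus-$B$ distribution of $\sigma$; together these force $i'\leq n-k$ precisely when $i\leq n$, completing the proof.
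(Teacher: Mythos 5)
Your proposal correctly identifies the overall skeleton of the argument: decompose $\db{D^{n+1}_{\jwproj}}$ over Kauffman states on $C(D^{n+1})\setminus\ell^{n+1}$, apply Lemma \ref{lem:decompstate}, use Lemma \ref{lem:disjointsk} to reduce to skeins of the form $\Sk^k_\sigma$ up to disjoint circles, and then use Theorem \ref{thm:dpuknot} together with Lemma \ref{lem:unknottrivial}. The homological lower bound you extract from Lemma \ref{lem:hoestimate} to dispose of $i<0$ is also correct. In all of this you are following the paper's route.

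However, there is a genuine gap, and it sits exactly where you flag ``the main technical obstacle'' but then wave it off as ``careful bookkeeping.'' Unwinding the shifts, $c := \sgn(\sigma) - \sgn(\sigma_A)$ is simply the number of $B$-resolutions chosen by $\sigma$, so what you need in order to push $i\leq n$ down to $i'=i-c\leq n-k$ is the inequality $c\geq k$. This is not an identity or a numerical cancellation; it is a structural statement about the state graph, and it is the \emph{only} place in the whole proof where the non-$A$-adequacy of $D$ is actually used. The paper isolates it as a lemma and proves it by first observing that, because the segment of $\ell$ is a one-edged loop in $s_A(D)$, the all-$A$ state $\sigma_A$ necessarily yields $\Sk^0_{\sigma_A}=U^{n+1}$ (so $k=0$ for $\sigma_A$), and then arguing via the innermost-to-outermost strands $s_1,\dots,s_k$ that producing $k$ side arcs requires at least $k$ local $A$-to-$B$ replacements. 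Your two numerical identities ($c(\Sk_\sigma)=(n+1)^2$ and $\sgn_A(\sigma)+\sgn_B(\sigma)=\tfrac12(n+1)^2(c(D)-1)$) do not imply this; they are blind to which crossings are resolved $B$ and to the geometry of the resulting skein. Also, your phrase ``precisely when'' is stronger than what is needed or true: only the implication $i\leq n \Rightarrow i'\leq n-k$ (given $i'\geq 0$) is required, and that is exactly what $c\geq k$ delivers. Until the inequality $c\geq k$ is established via the one-edged-loop argument, the proof is incomplete.
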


\begin{proof} 
Recall that $D$ is a non-$A$ adequate diagram and $\ell$ is a crossing in $D$ whose corresponding segment in the all-$A$ state is a one-edged loop. 
We may decompose the complex $\db{D^{n+1}_{\vcenter{\hbox{\def\svgwidth{.020\columnwidth} }}}}$ over Kauffman states restricted to the set of crossings $C(D^{n+1}) \setminus \ell^{n+1}$, where $\ell^{n+1}$ is the set of $(n+1)^2$ crossings corresponding to $\ell$ in $D^{n+1}$. On the level of chain groups, $\Ckh(D, n+1)$ may be written as a direct sum of the chain groups $\Ckh(\Sk_{\sigma})$ with the appropriate grading shifts: 
\begin{align*} 
\Ckh(D, n+1) &= \bigoplus_{\sigma \text{ a Kauffman state}} \mathbf{h}^{\sgn(\sigma)} \Ckh(\Sk_{\sigma}),\\
\intertext{and by Lemma \ref{lem:disjointsk}, $\Sk_{\sigma}$ is a disjoint union of $\Sk^{k}_{\sigma}$ with circles.}
&= \bigoplus_{\sigma \text{ a Kauffman state}} \mathbf{h}^{\sgn(\sigma)} \Ckh(\Sk^k_{\sigma} \sqcup \text{ disjoint circles}).
\end{align*}

Let $\sigma_A$ be the all-$A$ state which chooses the $A$-resolution on every crossing in $C(D^{n+1}) \setminus \ell^{n+1}$, and let $c = \sgn(\sigma) - \sgn(\sigma_A)$. 
Because $\Sk_{\sigma} = \Sk^k_{\sigma} \ \sqcup$ disjoint circles, if $\widetilde{\Kh_{i-c, *}}(\Sk^k_{\sigma}) = 0$, then $\widetilde{H^{Kh}_{i-c, *}}(\Sk_{\sigma})=0$ which implies $\widetilde{\Kh_{i, *}}(D, n+1)=0$ by Lemma \ref{lem:decompstate}. Thus it suffices to show for any $\sigma$ that
\[ \widetilde{\Kh_{i-c, *}}(\Sk^k_{\sigma}) = 0 \text{ for } i \leq n. \] 
For a fixed $\sigma$, if $i-c <0$, then there is nothing to prove since the homology groups with negative $i$-grading all vanish in the shifted homology. So the remaining case is where $i-c \geq 0$. Theorem \ref{cor:tail-trivial} will immediately follow from Theorem \ref{thm:dpuknot} and Lemma \ref{lem:unknottrivial} as long as we can show $c\geq k$. 

That is the content of the following lemma, where we use the fact that $\ell$ is a crossing corresponding to a one-edged loop in the all-$A$ state of $D$. 

\begin{lem} Let $\ell$ be a crossing in the non $A$-adequate diagram $D$ whose corresponding segment in $s_A(D)$ is a one-edged loop. For a skein $\Sk_{\sigma}^k$ containing $\ell^n$ as in Figure \ref{fig:gform} from a Kauffman state $\sigma$ on $C(D^n) \setminus \ell^n$, we have
$c=\sgn(\sigma)-\sgn(\sigma_A) \geq k$. 
\end{lem} 

\begin{proof}  There are two cases. A state $\sigma \not= \sigma_A$ with $k=0$ certainly has $c \geq k$. If $\sigma$ is a state for which $k>0$, then we argue that it must choose the $B$-resolution on at least $k$ crossings. 

Since the segment corresponding to $\ell$ in the all-$A$ state of $D$ is a one-edged loop, the skein $\Sk^k_A$ from the state $\sigma_A$ must have $k=0$. In other words, $\Sk^{k}_A = \Sk^{0}_A = U^{n+1}$ as in Figure \ref{fig:unknot}. Let $B_{\sigma}$ be the set of crossings in $C(D^n)\setminus \ell^n$ on which $\sigma$ chooses the $B$-resolution. We can obtain $\Sk_{\sigma}$ from $\Sk_{\sigma_A}$ by locally removing two arcs of the $A$-resolution and replacing them by the two arcs of the $B$-resolution at each segment in $\Sk_{\sigma_A}$ corresponding to a crossing in $B_{\sigma}$. See Figure \ref{fig:localrr} below for the local replacement. A state has a local replacement on a segment if it chooses the $B$-resolution at the crossing corresponding to the segment.

\begin{figure}[ht]
\centering
\def\svgwidth{.6\columnwidth}
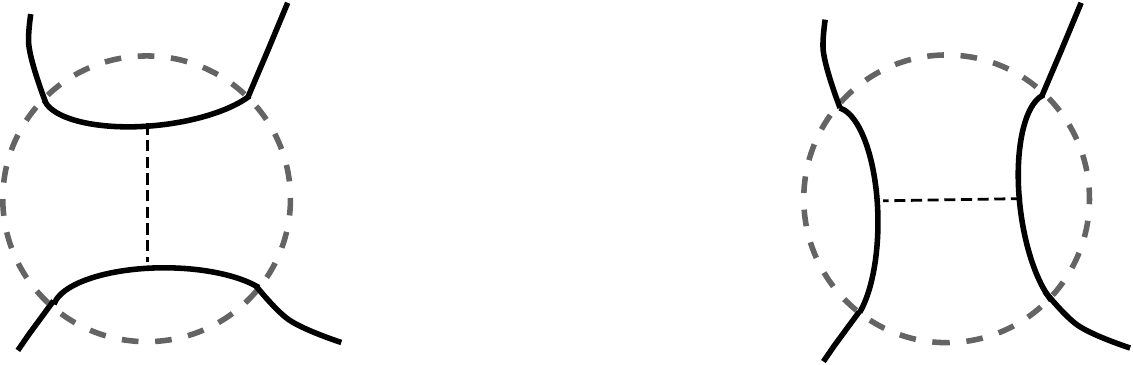
\caption{\label{fig:localrr}We change the state circles locally.}
\end{figure}

Number the outermost $k$ strands joining the top-right and the top-left projectors in $S^0_A$, outermost first, from $s_1, s_2, \ldots$, to $s_k$. Suppose that $|B_{\sigma}|< k$. There must be some $i\in 1, \ldots, k-1$ such that $\sigma$ has no local replacement from $A$ to $B$ on any segment between $s_i$ and $s_{i+1}$, or $\sigma$ has no local replacement from $A$ to $B$ on any segment attached to $s_1$ on the outside, see Figure \ref{fig:lessn} for the convention of what is meant by the ``outside" of the strand $s_1$ and ``between" the strands  $s_i$ and $s_{i+1}$. Either of these cases results in fewer than $2k$ strands joining the top and bottom pairs of projectors for $\Sk^k_{\sigma}$, a contradiction. This means that $|B_{\sigma}| \geq k$, and we have the statement of the lemma.

\begin{figure}[ht]
\centering
\def\svgwidth{.6\columnwidth}
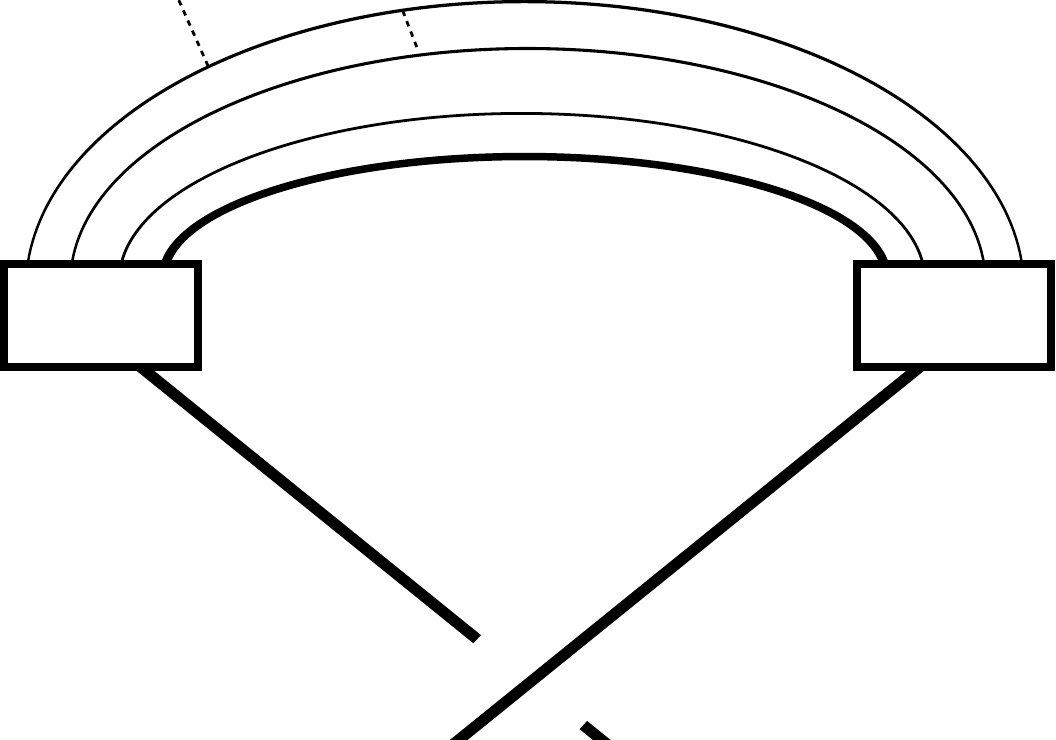
\caption{\label{fig:lessn} The strands $s_1, \ldots, s_k$ are marked here for $\Sk^0_A$. Two dashed segments are shown. One is attached to $s_1$ from the outside, and another is between $s_1$ and $s_2$. Segments which are not shown may attach to the strands in a complicated fashion. However, if $\sigma$ has no local replacement on any segment attached to $s_1$ from the outside, then $k=0$ for $\Sk^k_{\sigma}$. If $\sigma$ has no local replacement on any segment between $s_i$ and $s_{i+1}$, then there cannot be more than $2i$ strands which end up connecting the top and bottom pairs of projectors in $\Sk^k_{\sigma}$.}
\end{figure}

\end{proof}
\end{proof}

\bibliographystyle{amsalpha}
\bibliography{references}

\end{document}